\numberwithin{equation}{section}
\newtheorem{theorem}{Theorem}[section]
\newtheorem{ex}{Example}[section]
\newtheorem{lemma}[theorem]{Lemma}
\newtheorem{prop}{Proposition}[section]
\newtheorem{remark}{Remark}[section]
\newtheorem{claim}{Claim}[section]
\newenvironment{proof-sketch}{\noindent{\bf Sketch of Proof}\hspace*{1em}}{\qed\bigskip}
\newcommand{\RR}{\mathbb R}
\newcommand{\NN}{\mathbb N}
\newcommand{\ZZ}{\mathbb Z}
\renewcommand{\leq}{\leqslant}
\renewcommand{\geq}{\geqslant}
\begin{document}
\title[Parametric nonlinear resonant Robin problems]{Parametric nonlinear resonant Robin problems}
\author[N.S. Papageorgiou]{N.S. Papageorgiou}
\address[N.S. Papageorgiou]{National Technical University, Department of Mathematics,
				Zografou Campus, Athens 15780, Greece  \&
Institute of Mathematics, Physics and Mechanics, 1000 Ljubljana, Slovenia
 }
\email{\tt npapg@math.ntua.gr}
\author[V.D. R\u{a}dulescu]{V.D. R\u{a}dulescu}
\address[V.D. R\u{a}dulescu]{Faculty of Applied Mathematics, AGH University of Science and Technology, 30-059 Krak\'ow, Poland
 \&  Institute of Mathematics, Physics and Mechanics, 1000 Ljubljana, Slovenia}
\email{\tt vicentiu.radulescu@imfm.si}
\author[D.D. Repov\v{s}]{D.D. Repov\v{s}}
\address[D.D. Repov\v{s}]{Faculty of Education and Faculty of Mathematics and Physics, University of Ljubljana \& Institute of Mathematics, Physics and Mechanics, 1000 Ljubljana, Slovenia}
\email{\tt dusan.repovs@guest.arnes.si}
\keywords{Robin boundary condition, strong comparison, nonlinear regularity, truncation, critical groups, multiple solutions, resonance.\\
\phantom{aa} 2010 Mathematics Subject Classification. Primary: 35J20. Secondary: 35J60, 58E05.}
\begin{abstract}
 We consider a nonlinear Robin problem driven by the $p$-Laplacian. In the reaction we have the competing effects of two nonlinearities. One term is parametric, strictly $(p-1)$-sublinear and the other one is $(p-1)$-linear and resonant at any nonprincipal variational eigenvalue. Using variational tools from the critical theory (critical groups), we show that for all large enough  values of  parameter $\lambda$ the problem has at least five nontrivial smooth solutions.
\end{abstract}
\maketitle
\section{Introduction}
Let $\Omega\subseteq\RR^N$ be a bounded domain with a $C^2$-boundary $\partial\Omega$. In this paper, we study the following parametric nonlinear Robin problem:
\begin{equation}
	\left\{
		\begin{array}{ll}
			-\Delta_p u(z) = \lambda g(z,u(z)) + f(z,u(z))\ \mbox{in}\ \Omega, \\
			\frac{\partial u}{\partial n_p} + \beta(z)|u|^{p-2}u=0\ \mbox{on}\ \partial\Omega,
		\end{array}
	\right\}
	\tag{$P_\lambda$}\label{eqp}
\end{equation}
where $\lambda$ is a positive parameter.

In this problem, $\Delta_p$ denotes the $p$-Laplace differential operator defined by
$$
\Delta_p u = {\rm div}\, (|Du|^{p-2}Du)\ \mbox{for all}\ u\in W^{1,p}(\Omega),\ 1<p<\infty.
$$

In the reaction (the right-hand side) of problem (\ref{eqp}),  $g(z,x)$ and $f(z,x)$ are Carath\'eodory functions (that is, for all $x\in\RR$, the mappings $ z\mapsto g(z,x)$ and $z\mapsto f(z,x)$ are measurable, while for almost all $z\in\Omega$, the mappings $ x\mapsto g(z,x)$ and $x\mapsto f(z,x)$ are continuous functions). These two nonlinearities exhibit different growth near $\pm\infty$ and $0$. More precisely, for almost all $z\in\Omega$, $g(z,\cdot)$ is $(p-1)$-sublinear both near $0$ and near $\pm\infty$, while $f(z,\cdot)$ is $(p-1)$-linear near $0$ and $\pm\infty$. In fact, we permit resonance at $\pm\infty$  with respect to any nonprincipal variational eigenvalue of $-\Delta_p$ with Robin boundary condition.

The coefficient $\beta(\cdot)$ that appears in the boundary condition is strictly positive. This is needed in order to be able to use strong comparison techniques, which in the case of the $p$-Laplace differential operator are difficult to have.

We denote by $\frac{\partial u}{\partial n_p}$ the conormal derivative of $u$, which is defined by extension of the map
\begin{equation*}
C^1(\overline\Omega)\ni u\mapsto \frac{\partial u}{\partial n_p} = |Du|^{p-2} (Du,n)_{\RR^N} = |Du|^{p-2}\frac{\partial u}{\partial n},
\end{equation*}
with $n(\cdot)$ being the outward unit normal on $\partial\Omega$.

Using variational tools from the critical point theory, together with suitable truncation and strong comparison techniques and Morse theory (critical groups), we show that for large enough  values of the positive parameter $\lambda$, problem (\ref{eqp}) admits at least five nontrivial smooth solutions.

Multiplicity results proving three solutions theorems for nonresonant Dirichlet $p$--Laplacian equations were established by Gasinski \& Papageorgiou \cite{6}, Guo \& Liu \cite{13}, and Jiu \& Su \cite{15}, Liu \cite{18}. Resonant $p$-Laplacian equations were investigated by Gasinski \& Papageorgiou \cite{7, 8},  Mugnai \& Papageorgiou \cite{20}, Papageorgiou \& R\u adulescu \cite{21} (Neumann problems), and Papageorgiou \& R\u adulescu \cite{22} (Robin problems). In all the above works, the resonance was with respect to the principal eigenvalue. Resonance with respect to higher variational eigenvalues was allowed in the recent works of Papageorgiou, R\u adulescu \& Repov\v{s} \cite{25, 26}, which dealt with nonparametric equations. None of the aforementioned works produces more than three solutions. Abstract methods closely related with the content of this paper have been developed in the recent monograph of Papageorgiou, R\u adulescu \& Repov\v{s} \cite{prr2019}.

\section{Mathematical background and hypotheses}

Let $X$ be a Banach space and $X^*$ be its topological dual. We denote by $\langle\cdot,\cdot\rangle$  the duality brackets for the pair $(X^*,X)$. Given $\varphi\in C^1(X,\RR)$, we say that $\varphi(\cdot)$ satisfies the ``Cerami condition" (the ``C-condition" for short), if the following property holds:
$$
\begin{array}{ll}
``\mbox{Every sequence}\ \{u_n\}_{n\geq1}\subseteq X\ \mbox{such that}
\{\varphi(u_n)\}_{n\geq1}\subseteq\RR\ \mbox{is bounded and}\\
 \lim_{n\rightarrow\infty} (1+||u_n||_X)\varphi'(u_n)=0\ \mbox{in}\ X^*, \,
\mbox{admits a strongly convergent subsequence}".
\end{array}
$$

This compactness-type condition on the functional $\varphi(\cdot)$ leads to a deformation theorem, which is the main analytical tool in deriving the minimax theory of the critical values of $\varphi$. One of the main results in that theory is the so-called ``mountain pass theorem", which we recall here.
\begin{theorem}\label{th1}
		If $\varphi\in C^1(X,\RR)$ satisfies the PS-condition, $u_0,\,u_1\in X$, $||u_1-u_0||>\rho>0$,
	$$\max\{\varphi(u_0),\varphi(u_1)\}<\inf\{\varphi(u):||u-u_0||=\rho\}=m_{\rho}$$
	and $c=\inf\limits_{\gamma\in\Gamma}\max\limits_{0\leq t\leq 1}\ \varphi(\gamma(t))$ with $\Gamma=\{\gamma\in C([0,1],X):\gamma(0)=u_0,\gamma(1)=u_1\}$, then $c\geq m_{\rho}$ and $c$ is a critical value of $\varphi$ (that is, we can find $\hat{u}\in X$ such that $\varphi'(\hat{u})=0$ and $\varphi(\hat{u})=c$).
\end{theorem}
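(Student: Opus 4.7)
The plan is to use the standard two-step argument. First, I would establish $c \geq m_{\rho}$ from the geometric hypothesis. Fix any $\gamma \in \Gamma$. The real-valued function $t \mapsto \|\gamma(t) - u_0\|$ is continuous on $[0,1]$, vanishes at $t=0$, and equals $\|u_1 - u_0\| > \rho$ at $t=1$. By the intermediate value theorem there is $t^* \in (0,1)$ with $\|\gamma(t^*) - u_0\| = \rho$, so
\[
\max_{0 \leq t \leq 1} \varphi(\gamma(t)) \geq \varphi(\gamma(t^*)) \geq m_{\rho}.
\]
Taking the infimum over $\gamma \in \Gamma$ yields $c \geq m_{\rho}$.

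Second, to prove that $c$ is a critical value, I would argue by contradiction. Suppose the critical set $K_c = \{u \in X : \varphi'(u) = 0,\ \varphi(u) = c\}$ is empty. Set $\delta = c - \max\{\varphi(u_0), \varphi(u_1)\}$, which is strictly positive because $c \geq m_{\rho}$ and $m_{\rho}$ already strictly exceeds $\max\{\varphi(u_0), \varphi(u_1)\}$ by hypothesis. The PS/Cerami condition, combined with the quantitative deformation lemma, yields some $\epsilon \in (0, \delta/2)$ and a continuous map $\eta : [0,1] \times X \to X$ such that $\eta(s,u) = u$ whenever $|\varphi(u) - c| \geq 2\epsilon$, and $\eta(1, \varphi^{c+\epsilon}) \subseteq \varphi^{c-\epsilon}$, where $\varphi^{a} = \{u \in X : \varphi(u) \leq a\}$. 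By definition of $c$ there exists $\gamma_0 \in \Gamma$ with $\max_{t} \varphi(\gamma_0(t)) \leq c + \epsilon$. Set $\tilde\gamma(t) = \eta(1, \gamma_0(t))$. Since $\varphi(u_0), \varphi(u_1) \leq c - \delta < c - 2\epsilon$, the endpoints are fixed by $\eta(1, \cdot)$, so $\tilde\gamma \in \Gamma$; but then $\max_t \varphi(\tilde\gamma(t)) \leq c - \epsilon < c$, contradicting the definition of $c$.

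The main obstacle is not the two-step structure above but the quantitative deformation lemma on which it rests. Its proof requires constructing a locally Lipschitz pseudo-gradient vector field $V$ on the set of regular points satisfying $\|V(u)\|_{X} \leq 2\|\varphi'(u)\|_{X^*}$ and $\langle \varphi'(u), V(u)\rangle \geq \|\varphi'(u)\|_{X^*}^{2}$, then reweighting $V$ by $(1 + \|u\|_{X})$ so that the flow of the resulting ODE handles the Cerami weighting, combining this with a scalar cutoff supported in a tube around level $c$, and using the PS/Cerami condition to force $\|\varphi'(\cdot)\|_{X^*}$ to be bounded below on that tube (exploiting $K_c = \emptyset$). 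The existence and partition-of-unity construction of the pseudo-gradient field on the nonsmooth functional is the delicate analytical step; everything else is bookkeeping with the flow.
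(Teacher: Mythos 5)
Your argument is correct and is the canonical proof of the mountain pass theorem: the intermediate value theorem applied to $t\mapsto\|\gamma(t)-u_0\|$ gives $c\geq m_{\rho}$, and the quantitative deformation lemma (with $\varepsilon<\delta/2$, $\delta=c-\max\{\varphi(u_0),\varphi(u_1)\}>0$, so the endpoints lie below level $c-2\varepsilon$ and are fixed by the deformation) yields the contradiction if $K_{\varphi}^{c}=\emptyset$, the PS-condition being exactly what bounds $\|\varphi'\|_{X^*}$ away from zero in a tube around level $c$. The paper does not prove this statement at all --- it is recalled as classical background for the minimax theory --- so there is nothing to compare beyond noting that your proof is the standard one found in the cited monographs.
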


The following spaces will play a central role in the analysis of problem (\ref{eqp}):
$$
W^{1,p}(\Omega),\ C^1(\overline\Omega)\ \mbox{and}\ L^p(\partial\Omega).
$$

We denote by $||\cdot||$ the norm of the Sobolev space $W^{1,p}(\Omega)$. We know that
$$
||u||=\left(||u||^p_p + ||Du||^p_p\right)^{\frac{1}{p}}\ \mbox{for all}\ u\in W^{1,p}(\Omega).
$$

The space $C^1(\overline\Omega)$ is an ordered Banach space with positive (order) cone
$$
C_+=\{u\in C^1(\overline\Omega): u(z)\geq0\ \mbox{for all}\ z\in\overline\Omega\}.
$$

This cone has a nonempty interior which contains the open set
$$
D_+ = \{u\in C_+: u(z)>0\ \mbox{for all}\ z\in\overline\Omega\}.
$$
In fact, $D_+$ is the interior of $C_+$ when the latter is furnished with the weaker $C(\overline\Omega)$-norm topology.

On $\partial\Omega$ we consider the $(N-1)$-dimensional Hausdorff (surface) measure $\sigma(\cdot)$. Using this measure on $\partial\Omega$, we can define in the usual way the boundary Lebesgue spaces $L^q(\partial\Omega)$, $ 1\leq q\leq\infty$. We know that there exists a unique continuous linear map $\gamma_0: W^{1,p}(\Omega)\rightarrow L^p(\partial\Omega)$, known as the ``trace map", such that
$$
\gamma_0(u)=u|_{\partial\Omega}\ \mbox{for all}\ u\in W^{1,p}(\Omega)\cap C(\overline\Omega).
$$

The trace map defines boundary values for all Sobolev functions. We know that $\gamma_0(\cdot)$ is a compact map into $L^q(\partial\Omega)$ for all $q\in\left[1,\frac{(N-1)p}{N-p}\right)$ when $p<N$, and into $L^q(\partial\Omega)$ for all $1\leq q<\infty$ when $N\leq p$. We have
$$
{\rm im}\,\gamma_0 = W^{\frac{1}{p'},p}(\partial\Omega)\ \mbox{and}\ \ker\gamma_0=W^{1,p}_0(\Omega).
$$

Recall that $p'$ denotes the conjugate exponent of $p$ (that is, $\frac{1}{p}+\frac{1}{p'}=1$). In what follows, for the sake of notational simplicity, we drop the use of trace map $\gamma_0$. All restrictions of Sobolev functions on $\partial\Omega$ are understood in the sense of traces.

Our hypotheses on the boundary coefficient $\beta(\cdot)$ are the following:

\smallskip
$H(\beta)$: $\beta\in C^{0,\alpha}(\partial\Omega)$ with $0<\alpha<1$ and $\beta(z)>0$ for all $z\in\partial\Omega$.

\smallskip
In the sequel, we denote by $\tau: W^{1,p}(\Omega)\rightarrow\RR$ the $C^1$-functional defined by
$$
\tau(u) = ||Du||^p_p + \int_{\partial\Omega}\beta(z)|u|^pd\sigma.
$$

By Proposition 2.4 of Gasinski \& Papageorgiou \cite{9}, we know that $\tau(\cdot)^\frac{1}{p}$ is an equivalent norm on $W^{1,p}(\Omega)$. So, there exist $c_1,c_2>0$ such that
\begin{equation}\label{eq1}
	c_1||u||^p\leq\tau(u)\leq c_2||u||^p\ \mbox{for all}\ u\in W^{1,p}(\Omega).
\end{equation}

Let $f_0:\Omega\times\RR\rightarrow\RR$ be a Carath\'eodory function such that
$$
|f_0(z,x)|\leq a_0(z)\left(1+|x|^{r-1}\right)\ \mbox{for almost all}\ z\in\Omega\ \mbox{and all}\ x\in\RR,
$$
with $a_0\in L^\infty(\Omega), 1<r\leq p^*$, where $p^*$ is the Sobolev critical exponent corresponding to $p$, hence
$$
p^*=\left\{
	\begin{array}{ll}
		\frac{Np}{N-p}\ &\mbox{if}\ p<N\\
		+\infty\ 		&\mbox{if}\ N\leq p.
	\end{array}
\right.
$$

We set $F_0(z,x)=\int^x_0 f_0(z,s)ds$ and consider the $C^1$-functional $\varphi_0: W^{1,p}(\Omega)\rightarrow\RR$ defined by
$$
\varphi_0(u) = \frac{1}{p}\tau(u) - \int_\Omega F_0(z,u)dz\ \mbox{for all}\ u\in W^{1,p}(\Omega).
$$

The next proposition is a special case of a more general result of Papageorgiou \& R\u adulescu \cite{23}. The proposition is essentially an outgrowth of the nonlinear regularity theory of Lieberman \cite{17}.

\begin{prop}\label{prop2}
	Assume that $u_0\in W^{1,p}(\Omega)$ is a local $C^1(\overline\Omega)$-minimizer of $\varphi_0$, that is, there exists $\rho_0>0$ such that
	$$
	\varphi_0(u_0)\leq\varphi_0(u_0+h)\ \mbox{for all}\ h\in C^1(\overline\Omega)\ \mbox{with}\ ||h||_{C^1(\overline\Omega)}\leq\rho_0,
	$$
	Then $u_0\in C^{1,\alpha}(\overline\Omega)$ for some $\alpha\in(0,1)$ and $u_0$ is also a local $W^{1,p}(\Omega)$-minimizer of $\varphi_0$, that is, there exists $\rho_1>0$ such that
	$$
	\varphi_0(u_0)\leq\varphi_0(u_0+h)\ \mbox{for all}\ h\in W^{1,p}(\Omega)\ \mbox{with}\ ||h||\leq \rho_1.
	$$
\end{prop}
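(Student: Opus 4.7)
The plan is to adapt the classical Brezis--Nirenberg strategy to the $p$-Laplacian with a Robin boundary condition, leveraging Lieberman's nonlinear regularity theory. I split the argument into a regularity step and a contradiction step.

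\textbf{Regularity of $u_0$.} Since $u_0$ is a local $C^1(\overline\Omega)$-minimizer of $\varphi_0$, it is in particular a critical point, hence a weak solution of the Robin problem $-\Delta_p u_0=f_0(z,u_0)$ in $\Omega$ together with $\frac{\partial u_0}{\partial n_p}+\beta(z)|u_0|^{p-2}u_0=0$ on $\partial\Omega$. The growth bound on $f_0$ allows a standard Moser iteration to yield $u_0\in L^\infty(\Omega)$, and then Lieberman's boundary nonlinear regularity theorem produces $u_0\in C^{1,\alpha}(\overline\Omega)$ for some $\alpha\in(0,1)$.

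\textbf{Local $W^{1,p}$-minimality by contradiction.} Suppose the conclusion fails. Then for each $n\in\NN$ one finds $v_n\in W^{1,p}(\Omega)$ with $\|v_n-u_0\|\leq 1/n$ and $\varphi_0(v_n)<\varphi_0(u_0)$. Consider the constrained problem $m_n=\inf\{\varphi_0(u):u\in\overline{B}_{1/n}(u_0)\}$. Weak compactness of the closed ball in $W^{1,p}(\Omega)$ together with the sequential weak lower semicontinuity of $\varphi_0$ (from Sobolev compact embeddings when $r<p^*$, and from a concentration/truncation argument in the critical case $r=p^*$) produces a minimizer $y_n\in\overline{B}_{1/n}(u_0)$ with $\varphi_0(y_n)\leq\varphi_0(v_n)<\varphi_0(u_0)$.

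The Lagrange multiplier rule yields $\mu_n\leq 0$ such that $y_n$ solves a perturbed Robin problem of the form $A_p(y_n)+\beta|y_n|^{p-2}y_n-f_0(\cdot,y_n)=\mu_n\,J_p(y_n-u_0)$ in $W^{1,p}(\Omega)^*$, where $A_p$ is the Robin $p$-Laplacian operator and $J_p$ is the duality map of $W^{1,p}(\Omega)$. A direct estimate using $\|y_n-u_0\|\leq 1/n$ shows $\mu_n\to 0$, and a Moser iteration applied uniformly to this family gives a uniform $L^\infty$ bound. Lieberman's estimates then produce a uniform $C^{1,\alpha}(\overline\Omega)$ bound on $\{y_n\}$. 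By the compact embedding $C^{1,\alpha}(\overline\Omega)\hookrightarrow C^1(\overline\Omega)$, along a subsequence $y_n\to y$ in $C^1(\overline\Omega)$, and the norm bound $\|y_n-u_0\|\leq 1/n$ forces $y=u_0$. For large $n$ then $\|y_n-u_0\|_{C^1(\overline\Omega)}\leq\rho_0$ yet $\varphi_0(y_n)<\varphi_0(u_0)$, contradicting the $C^1$-local minimality of $u_0$.

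The principal obstacle is producing the uniform $C^{1,\alpha}$ estimates on the constrained minimizers $y_n$: one must simultaneously show that the Lagrange multipliers $\mu_n$ decay (so the perturbation is controlled in the Lieberman scheme) and that Moser iteration yields an $L^\infty$ bound that is independent of $n$. Under strict subcritical growth $r<p^*$ this is routine, but the borderline case $r=p^*$ requires a supplementary truncation in order to apply the regularity theorem uniformly along the family of perturbed Robin problems.
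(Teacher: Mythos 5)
The paper itself does not prove Proposition \ref{prop2}: it is quoted from Papageorgiou \& R\u adulescu \cite{23} as a special case of a more general result resting on Lieberman's nonlinear regularity theory. Your outline reconstructs what is essentially the standard (Brezis--Nirenberg type) argument behind that reference: argue by contradiction, minimize $\varphi_0$ on small closed balls around $u_0$, extract the Euler--Lagrange equation with a Lagrange multiplier, obtain uniform $L^\infty$ and then uniform $C^{1,\alpha}(\overline\Omega)$ bounds, and use the compact embedding of $C^{1,\alpha}(\overline\Omega)$ into $C^1(\overline\Omega)$ to contradict the $C^1$-local minimality. So the architecture is the right one.

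Two steps, however, are genuine gaps as written. First, the assertion that ``a direct estimate using $\|y_n-u_0\|\leq 1/n$ shows $\mu_n\to 0$'' is not substantiated and does not follow from the obvious test: acting with $y_n-u_0$ on $\varphi_0'(y_n)=\mu_n J_p(y_n-u_0)$ and using $\varphi_0'(u_0)=0$, monotonicity of the principal and boundary terms together with the growth of $f_0$ only give $|\mu_n|\,\|y_n-u_0\|^{p}\leq c\,\|y_n-u_0\|$, that is $|\mu_n|\leq c\,\|y_n-u_0\|^{1-p}$, which may blow up. Fortunately no decay (not even boundedness) of $\mu_n$ is needed: since $\mu_n\leq 0$, the principal part of the perturbed equation is $\mathrm{div}\bigl(|Dy_n|^{p-2}Dy_n+|\mu_n|\,|D(y_n-u_0)|^{p-2}D(y_n-u_0)\bigr)$, and after dividing the equation by $\max\{1,|\mu_n|\}$ it satisfies Lieberman's structure conditions with constants independent of $n$ (using $u_0\in C^{1,\alpha}(\overline\Omega)$ from the first step); this normalization device is what the literature uses, and you should replace the unproved decay claim by it. Second, in the critical case $r=p^*$, which the statement allows, $\varphi_0$ is in general not sequentially weakly lower semicontinuous, so the direct method on $\overline{B}_{1/n}(u_0)$ does not produce the minimizers $y_n$ as claimed; the phrase ``concentration/truncation argument'' is a placeholder, not a proof, and the critical case genuinely requires the truncation scheme of Brezis--Nirenberg and Garc\'ia Azorero--Peral--Manfredi (minimize a suitably truncated functional, then use the uniform $C^{1,\alpha}$ estimates and the $C^1(\overline\Omega)$-convergence to $u_0$ to remove the truncation a posteriori). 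For the subcritical range $r<p^*$, which is all that is used elsewhere in this paper, your argument, with the multiplier point repaired, is a complete outline.
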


It is well-known that in the nonlinear case ($p\neq2$), it is difficult to produce strong comparison results and more restrictive conditions are needed on the data of the problem. The next proposition is a special case of a more general result of Gasinski \& Papageorgiou \cite[Proposition 3.4]{9}.

\begin{prop}\label{prop3}
	If $h_1,h_2\in L^\infty(\Omega), h_1(z)\leq h_2(z)$ for almost all $z\in\Omega$, $h_1\not\equiv h_2$ and $u_1,u_2\in C^1(\overline\Omega)$ satisfy $u_1\leq u_2$ and
	\begin{eqnarray*}
		-\Delta_p u_1(z) = h_1(z)\ \mbox{for almost all}\ z\in\Omega,\ \frac{\partial u_1}{\partial n}|_{\partial\Omega}<0, \\
		-\Delta_pu_2(z) = h_2(z)\ \mbox{for almost all}\ z\in\Omega,\ \frac{\partial u_2}{\partial n}|_{\partial\Omega}<0,
	\end{eqnarray*}
	then $u_2-u_2\in {\rm int}\,C_+$.
\end{prop}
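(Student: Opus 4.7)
My plan is to argue via a pointwise linearization of the $p$-Laplacian difference, combined with the strong maximum principle and the Hopf boundary point lemma. Set $v:=u_2-u_1\in C^1(\overline\Omega)$; by hypothesis $v\ge 0$ on $\overline\Omega$, and I must show $v\in\mathrm{int}\,C_+$, which in the $C^1(\overline\Omega)$-topology amounts to $v>0$ on $\Omega$ together with $\partial v/\partial n(z_0)<0$ at every $z_0\in\partial\Omega$ at which $v(z_0)=0$.

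First, subtracting the two equations gives, weakly on $\Omega$,
\[
-\mathrm{div}\bigl(|Du_2|^{p-2}Du_2-|Du_1|^{p-2}Du_1\bigr)=h_2-h_1\ge 0,\qquad h_2-h_1\not\equiv 0.
\]
Using the fundamental theorem of calculus along the segment from $Du_1(z)$ to $Du_2(z)$, I would produce a symmetric matrix field $A(z)$, positive definite wherever $|Du_1(z)|+|Du_2(z)|>0$, such that $|Du_2|^{p-2}Du_2-|Du_1|^{p-2}Du_1=A(z)Dv$. Hence $v$ becomes a weak supersolution of the linear divergence-form operator $Lw:=-\mathrm{div}(A(\cdot)Dw)$.

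For interior positivity I would invoke the strong maximum principle for the $p$-Laplacian of V\'azquez: $-\Delta_p u_2+\Delta_p u_1\ge 0$ weakly on $\Omega$, $v\ge 0$, and $v\not\equiv 0$ (forced by $h_2\not\equiv h_1$) together yield $v>0$ on $\Omega$. The detour through the intrinsic nonlinear version is needed because $A(z)$ may degenerate at common critical points of $u_1$ and $u_2$. For a putative boundary zero $z_0\in\partial\Omega$ of $v$, the hypothesis $\partial u_i/\partial n(z_0)<0$ together with $u_i\in C^1(\overline\Omega)$ forces $|Du_i|\ge\delta>0$ uniformly on a relative neighborhood of $z_0$ in $\overline\Omega$. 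Consequently $L$ is uniformly elliptic there, the $C^2$-regularity of $\partial\Omega$ supplies the interior sphere condition at $z_0$, and the classical Hopf boundary point lemma applied to the supersolution $v$ of $L$ yields $\partial v/\partial n(z_0)<0$; combined with the interior strict positivity this gives $v\in\mathrm{int}\,C_+$.

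The main obstacle I anticipate is the interior step: wherever $Du_1$ and $Du_2$ simultaneously vanish the matrix $A(z)$ loses uniform ellipticity, so the linearized route breaks down and one must rely on the intrinsic strong maximum principle for the $p$-Laplacian. At the boundary, by contrast, the hypothesis $\partial u_i/\partial n|_{\partial\Omega}<0$ supplies precisely the non-degeneracy needed for the Hopf lemma to close the argument, which is why this condition is imposed in the statement rather than the weaker $\partial u_i/\partial n\le 0$.
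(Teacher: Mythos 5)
There is a genuine gap, and it sits exactly at the heart of the matter. The paper itself gives no proof of this proposition (it is quoted as a special case of Gasinski \& Papageorgiou \cite[Proposition 3.4]{9}), so the only question is whether your argument stands on its own; the interior step does not. V\'azquez's strong maximum principle applies to a \emph{single} nonnegative function $u$ satisfying a differential inequality of the form $-\Delta_p u+\beta(u)\geq 0$; it says nothing about the difference $v=u_2-u_1$ of two solutions, because $\Delta_p$ is nonlinear and $-\Delta_p u_2+\Delta_p u_1\geq 0$ is \emph{not} a differential inequality for $v$ alone. The only way to convert it into one is the linearization you describe, $-\,{\rm div}(A(z)Dv)\geq 0$, and that operator degenerates (for $p>2$; for $1<p<2$ its coefficients are even unbounded) precisely on the set where $Du_1$ and $Du_2$ both vanish, i.e.\ exactly where the argument is needed. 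There is no ``intrinsic nonlinear version'' of the strong maximum principle covering differences of $p$-harmonic-type functions: what you are invoking at this point is a strong \emph{comparison} principle, which is the very statement to be proved, so the argument is circular. This is not a removable technicality: for $p\neq 2$ strict comparison can genuinely fail across regions where both gradients vanish (there $u_1\equiv u_2$ forces only $h_1\equiv h_2$ locally, which is compatible with $h_1\not\equiv h_2$ on $\Omega$), which is why the literature the paper leans on (Gasinski \& Papageorgiou \cite{9}, and the related results of Arcoya--Ruiz, Cuesta--Tak\'a\v{c}) proves such results under additional structural hypotheses rather than by a maximum-principle one-liner, and why the paper itself remarks that strong comparison is ``difficult to have'' when $p\neq 2$.

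The boundary portion of your plan is essentially the standard and correct device: near a point $z_0\in\partial\Omega$ the conditions $\frac{\partial u_i}{\partial n}(z_0)<0$ and $u_i\in C^1(\overline\Omega)$ make the linearized operator uniformly elliptic with bounded coefficients, and Hopf's boundary point lemma then yields $\frac{\partial v}{\partial n}(z_0)<0$ at a boundary zero of $v$ (you correctly identify this as the role of the hypothesis). Two small repairs are still needed there: positive definiteness and boundedness of $A(z)=\int_0^1 \nabla a\bigl((1-t)Du_1(z)+tDu_2(z)\bigr)dt$, $a(\xi)=|\xi|^{p-2}\xi$, require the whole segment $[Du_1(z),Du_2(z)]$ to stay away from the origin, not merely $|Du_1(z)|+|Du_2(z)|>0$; near $z_0$ this does hold because both gradients have normal component bounded away from $0$ from below in absolute value, so every convex combination does too, but the justification should be stated. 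And Hopf's lemma needs $v>0$ in the interior ball touching $\partial\Omega$ at $z_0$, which again depends on the unproved interior step. As written, the proposal reduces the proposition to itself at the one place where the nonlinearity bites.
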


Let $A:W^{1,p}(\Omega)\rightarrow W^{1,p}(\Omega)^*$ be the nonlinear map defined by
$$
\langle A(u),h\rangle = \int_\Omega|Du|^{p-2} (Du,Dh)_{\RR^N}dz\ \mbox{for all}\ u,h\in W^{1,p}(\Omega).
$$

The next proposition is a special case of Problem 2.192 of Gasinski \& Papageorgiou \cite[p. 279]{11}.

\begin{prop}\label{prop4}
	The map $A(\cdot)$ is bounded (that is, it maps bounded sets to bounded sets), continuous, monotone (thus, maximal monotone, too) and of type $(S)_+$ (that is, if $u_n\xrightarrow{w}u$ in $W^{1,p}(\Omega)$ and $\limsup_{n\rightarrow\infty}\langle A(u_n),u_n-u\rangle\leq0$, then $u_n\rightarrow u$ in $W^{1,p}(\Omega)$).
\end{prop}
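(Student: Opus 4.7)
The plan is to verify the four asserted properties in a natural order: boundedness, continuity, monotonicity (plus maximal monotonicity as an automatic corollary), and finally the $(S)_+$ property, which carries the main analytic content.

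For boundedness, I would simply apply Hölder's inequality: for all $u,h\in W^{1,p}(\Omega)$,
$$
|\langle A(u),h\rangle|\leq \int_\Omega |Du|^{p-1}|Dh|\,dz\leq \|Du\|_p^{p-1}\|Dh\|_p\leq \|u\|^{p-1}\|h\|,
$$
so $\|A(u)\|_*\leq \|u\|^{p-1}$. For continuity, suppose $u_n\to u$ in $W^{1,p}(\Omega)$. Then $Du_n\to Du$ in $L^p(\Omega,\RR^N)$, and by continuity of the Nemytskii operator $\xi\mapsto |\xi|^{p-2}\xi$ from $L^p$ into $L^{p'}$ (passing to an a.e.-convergent subsequence and invoking dominated convergence along standard Vitali-type arguments), we obtain $|Du_n|^{p-2}Du_n\to |Du|^{p-2}Du$ in $L^{p'}(\Omega,\RR^N)$; then $\|A(u_n)-A(u)\|_*\to 0$ by Hölder. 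A subsequence principle promotes this to the full sequence.

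For monotonicity, I would use the classical pointwise inequality
$$
(|\xi|^{p-2}\xi-|\eta|^{p-2}\eta,\xi-\eta)_{\RR^N}\geq 0\quad\text{for all }\xi,\eta\in\RR^N,
$$
which integrates immediately to give $\langle A(u)-A(v),u-v\rangle\geq 0$. Since $A$ is monotone and hemicontinuous (in fact continuous) on the reflexive space $W^{1,p}(\Omega)$, Minty's theorem yields maximal monotonicity for free.

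The main obstacle is the $(S)_+$ property. Assume $u_n\xrightarrow{w}u$ in $W^{1,p}(\Omega)$ and $\limsup_n\langle A(u_n),u_n-u\rangle\leq 0$. Since $A(u)\in W^{1,p}(\Omega)^*$ is a fixed functional, weak convergence gives $\langle A(u),u_n-u\rangle\to 0$. Combining with monotonicity,
$$
0\leq \langle A(u_n)-A(u),u_n-u\rangle = \langle A(u_n),u_n-u\rangle-\langle A(u),u_n-u\rangle,
$$
so taking $\limsup$ on the right forces $\lim_n \langle A(u_n)-A(u),u_n-u\rangle=0$. This limit equals $\int_\Omega (|Du_n|^{p-2}Du_n-|Du|^{p-2}Du, Du_n-Du)_{\RR^N}\,dz$. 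Now invoke the Simon inequalities: for $p\geq 2$ there is $c_p>0$ with $(|\xi|^{p-2}\xi-|\eta|^{p-2}\eta,\xi-\eta)\geq c_p|\xi-\eta|^p$, giving $\|Du_n-Du\|_p\to 0$ directly; for $1<p<2$ one uses $(|\xi|^{p-2}\xi-|\eta|^{p-2}\eta,\xi-\eta)\geq c_p|\xi-\eta|^2(1+|\xi|+|\eta|)^{p-2}$ together with Hölder's inequality applied to $|Du_n-Du|^p=|Du_n-Du|^p(1+|Du_n|+|Du|)^{p(p-2)/2}(1+|Du_n|+|Du|)^{p(2-p)/2}$, controlling the last factor by the uniform $L^p$-bound on $\{Du_n\}$. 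Either way $\|Du_n-Du\|_p\to 0$, and the compact embedding $W^{1,p}(\Omega)\hookrightarrow L^p(\Omega)$ provides $\|u_n-u\|_p\to 0$, so $u_n\to u$ in $W^{1,p}(\Omega)$, as required.
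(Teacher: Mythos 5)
Your proof is correct and complete. Note that the paper offers no argument of its own for this proposition: it simply quotes the statement as a special case of Problem 2.192 in Gasinski \& Papageorgiou \cite{11}, so there is no in-paper proof to compare against; what you have written is essentially the standard argument behind that cited result. All four steps check out: the H\"older estimate $\|A(u)\|_*\leq\|Du\|_p^{p-1}\leq\|u\|^{p-1}$ gives boundedness; continuity follows from the continuity of the Nemytskii map $\xi\mapsto|\xi|^{p-2}\xi$ from $L^p(\Omega,\RR^N)$ into $L^{p'}(\Omega,\RR^N)$; the pointwise inequality $(|\xi|^{p-2}\xi-|\eta|^{p-2}\eta,\xi-\eta)_{\RR^N}\geq0$ integrates to monotonicity, and Minty's theorem upgrades a monotone, hemicontinuous operator on the reflexive space $W^{1,p}(\Omega)$ to a maximal monotone one; and your $(S)_+$ argument is sound, since weak convergence kills $\langle A(u),u_n-u\rangle$, monotonicity then forces $\langle A(u_n)-A(u),u_n-u\rangle\rightarrow0$, and the Simon-type inequalities (with the case split at $p=2$, the subquadratic case handled by your H\"older splitting against the uniform $L^p$-bound on $\{Du_n\}_{n\geq1}$) yield $\|Du_n-Du\|_p\rightarrow0$, while the compact embedding of $W^{1,p}(\Omega)$ into $L^p(\Omega)$ supplies $\|u_n-u\|_p\rightarrow0$. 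The only cosmetic remark is that the version of the singular Simon inequality with $1+|\xi|+|\eta|$ is weaker than (and follows from) the usual one with $|\xi|+|\eta|$, which is harmless and in fact avoids any division-by-zero issue.
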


We will need some basic facts about the spectrum of the negative $p$-Laplacian with Robin boundary condition. So, we consider the following nonlinear eigenvalue problem:
\begin{equation}\label{eq2}
	\left\{
		\begin{array}{ll}
			-\Delta_pu(z) = \hat{\lambda}|u(z)|^{p-2}u(z)\ \mbox{in}\ \Omega, \\
			\frac{\partial u}{\partial n_p} + \beta(z)|u|^{p-2}u=0\ \mbox{on}\ \partial\Omega.
		\end{array}
	\right\}
\end{equation}

We say that $\hat{\lambda}\in\RR$ is an ``eigenvalue" of (\ref{eq2}), if the problem admits a nontrivial solution $\hat{u}\in W^{1,p}(\Omega)$, known as an eigenfunction corresponding to $\hat{\lambda}$. The nonlinear regularity theory of Lieberman \cite[Theorem 2]{17}, implies that $\hat{u}\in C^1(\overline\Omega)$. There is a smallest eigenvalue $\hat{\lambda}_1$ which has the following properties:
\begin{itemize}
	\item $\hat\lambda_1$ is isolated (that is, we can find $\varepsilon>0$ such that the open interval $(\hat\lambda_1,\hat\lambda_1+\varepsilon)$ contains no eigenvalues);
	\item $\hat\lambda_1$ is simple (that is, if $\hat{u}, \tilde{u}$ are eigenfunctions corresponding to $\hat\lambda_1$, then $\hat{u}=\xi\tilde{u}$ for some $\xi\in\RR\backslash\{0\}$);
\item we have
		\begin{equation}\label{eq3}
\hat\lambda_1=\inf\left\{\frac{\tau(u)}{||u||^p_p}:u\in W^{1,p}(\Omega), u\neq0\right\}>0\ \mbox{(see (\ref{eq1}))}.
		\end{equation}
\end{itemize}

The infimum in (\ref{eq3}) is realized on the corresponding one-dimensional eigenspace. From the above properties it follows that the elements of this eigenspace do not change sign and they, of course, belong in $C^1(\overline\Omega)$. Let $\hat{u}_1$ denote the positive, $L^p$-normalized (that is, $||\hat{u}_1||_p=1$) eigenfunction corresponding to $\hat{\lambda}_1$. We have $\hat{u}_1\in C_+\backslash\{0\}$ and in fact, by the nonlinear Hopf's boundary point theorem (see Gasinski \& Papageorgiou \cite[p. 738]{10}), we have $\hat{u}_1\in D_+$.

Let $\hat\sigma(p)$ denote the set of eigenvalues of (\ref{eq2}). It is easy to check that the set $\hat\sigma(p)\subseteq(0,+\infty)$ is closed. So, the second eigenvalue of (\ref{eq2}) is well-defined by
$$
\hat\lambda_2 = \min\{\hat\lambda\in\hat\sigma(p):\hat\lambda\neq\hat\lambda_1\}.
$$

The Ljusternik-Schnirelmann minimax scheme gives us in addition to $\hat\lambda_1$ and $\hat\lambda_2$, a whole strictly increasing sequence $\{\hat\lambda_k\}_{k\in\NN}$ of distinct eigenvalues of (\ref{eq2}) such that $\hat\lambda_k\rightarrow+\infty$. These are known as ``variational eigenvalues". Depending on the index used in the Ljusternik-Schnirelmann minimax scheme, we produce a corresponding sequence of variational eigenvalues. We know that these sequences coincide in the first two elements. However, we do not know if the variational eigenvalues are independent of the index used or they exhaust $\hat\sigma(p)$. This is the case if $p=2$ (linear eigenvalues problem). Here we consider the sequence of variational eigenvalues generated by the Fadell-Rabinowitz cohomological index (see \cite{5}). In this way we can use the results of Cingolani \& Degiovanni \cite{4} (see also Papageorgiou, R\u adulescu \& Repov\v{s} \cite[Proposition 12]{25}). Note that if $\hat\lambda\neq\hat\lambda_1$, then the eigenfunctions are sign-changing.

The following lemma is a simple consequence of the above properties of $\hat\lambda_1>0$ (see Papageorgiou, R\u adulescu \& Repov\v{s} \cite[Lemma 14]{25}).

\begin{lemma}\label{lem5}
	If $\vartheta\in L^\infty(\Omega), \vartheta(z)\leq\hat\lambda_1$ for almost all $z\in\Omega, \vartheta\not\equiv\hat\lambda_1$, then there exists $c_3>0$ such that
	\begin{equation*}
		c_3||u||^p\leq \tau(u) - \int_\Omega\vartheta(z)|u|^pdz\ \mbox{for all}\ u\in W^{1,p}(\Omega).
	\end{equation*}
\end{lemma}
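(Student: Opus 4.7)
I would argue by contradiction, exploiting the variational characterization of $\hat\lambda_1$ together with the simplicity of the first eigenvalue and the strict positivity of $\hat u_1 \in D_+$.

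Assume the conclusion fails. Then for every $n\in\NN$ there exists $u_n\in W^{1,p}(\Omega)\setminus\{0\}$ with
$$\tau(u_n) - \int_\Omega\vartheta(z)|u_n|^p dz < \frac{1}{n}\|u_n\|^p.$$
By $p$-homogeneity of both sides we may normalize $\|u_n\|=1$. So $\{u_n\}$ is bounded in $W^{1,p}(\Omega)$ and along a subsequence $u_n\xrightarrow{w} u$ in $W^{1,p}(\Omega)$, $u_n\to u$ in $L^p(\Omega)$ (by the Rellich--Kondrachov embedding) and $u_n\to u$ in $L^p(\partial\Omega)$ (by the compactness of the trace map).

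Passing to the limit, using that $\tau$ is sequentially weakly lower semicontinuous and that the $L^p$ and boundary terms converge strongly, I get
$$\tau(u) - \int_\Omega\vartheta(z)|u|^p dz \le 0.$$
On the other hand, from $\vartheta(z)\le\hat\lambda_1$ a.e. and the Rayleigh characterization (\ref{eq3}) I have
$$\tau(u) - \int_\Omega\vartheta(z)|u|^p dz \ge \tau(u) - \hat\lambda_1\|u\|_p^p \ge 0,$$
so both inequalities are equalities. Now I split into two cases. If $u=0$, then $\|u_n\|_p\to 0$, hence $\int_\Omega\vartheta|u_n|^p dz\to 0$, and therefore $\tau(u_n)\to 0$. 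By (\ref{eq1}) this forces $\|u_n\|\to 0$, contradicting $\|u_n\|=1$.

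If $u\neq 0$, then $u$ realizes the infimum in (\ref{eq3}), so $u$ is an eigenfunction associated with $\hat\lambda_1$. By the simplicity of $\hat\lambda_1$, $u = \xi\hat u_1$ for some $\xi\neq 0$, and since $\hat u_1\in D_+$ we have $|u(z)|>0$ for all $z\in\overline\Omega$. But then from $\tau(u) = \int_\Omega\vartheta(z)|u|^p dz$ and $\tau(u) = \hat\lambda_1\|u\|_p^p$ we deduce
$$\int_\Omega \bigl(\hat\lambda_1 - \vartheta(z)\bigr)|u|^p dz = 0,$$
where the integrand is nonnegative a.e. and $|u|^p > 0$ everywhere; this forces $\vartheta\equiv\hat\lambda_1$, contradicting the hypothesis. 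The main (and only) subtle point is handling the $u\neq 0$ case, where the pointwise positivity $\hat u_1\in D_+$ provided by the nonlinear Hopf lemma is what rules out the borderline scenario.
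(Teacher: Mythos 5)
Your argument is correct, and it is essentially the standard proof of this lemma: the paper itself does not prove it but cites \cite[Lemma 14]{25}, where the same scheme is used (negate, normalize by $p$-homogeneity, pass to a weak limit using the compact embeddings and weak lower semicontinuity of $\tau$, then dispose of the cases $u=0$ via (\ref{eq1}) and $u\neq0$ via the Rayleigh characterization (\ref{eq3}), simplicity of $\hat\lambda_1$ and $\hat u_1\in D_+$). No gaps; your handling of the borderline case through the strict positivity of $\hat u_1$ is exactly the intended point.
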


Next, we recall some basic definitions and facts from the theory of critical groups. So, let $X$ be a Banach space and $\varphi\in C^1(X,\RR)$, $c\in\RR$. We introduce the following sets:
\begin{equation*}
	K_\varphi = \{u\in X:\varphi'(u)=0\},\ K^c_\varphi = \{u\in K_\varphi:\varphi(u)=c\},\ \varphi^c=\{u\in X:\varphi(u)\leq c\}.
\end{equation*}

Given a topological pair $(Y_1, Y_2)$ such that $Y_2\subseteq Y_1\subseteq X$, we denote by $H_k(Y_1, Y_2)$ ($k\in \NN_0$) the $k$th relative singular homology group with integer coefficients.
Recall that $H_k(Y_1, Y_2)=0$ for all $k\in -\NN$. Suppose that $u\in K^c_\varphi$ is isolated. The critical groups of $\varphi$ at $u$ are defined by
\begin{equation*}
	C_k(\varphi,u) = H_k(\varphi^c\cap U,\varphi^c\cap U\backslash\{u\})\ \mbox{for all}\ k\in \NN_0,
\end{equation*}
with $U$ a neighborhood of $u$ such that $K_\varphi\cap\varphi^c\cap U=\{u\}$. The excision property of singular homology implies that this definition is independent of the choice of the isolating neighborhood $U$.

Suppose that $\varphi$ satisfies the $C$-condition and $\inf\varphi(K_\varphi)>-\infty$. Then the critical groups of $\varphi$ at infinity are defined by
\begin{equation*}
	C_k(\varphi,\infty) = H_k(X,\varphi^c)\ \mbox{for all}\ k\in\NN_0,
\end{equation*}
with $c<\inf\varphi(K_\varphi)$. This definition is independent of the choice of the level $c<\inf\varphi(K_\varphi)$. Indeed, suppose that $c'<c<\inf\varphi(K_\varphi)$. Then the second deformation theorem (see, for example, Gasinski \& Papageorgiou \cite[p. 628]{10}) implies that $\varphi^{c'}$ is a strong deformation retract of $\varphi^c$. Therefore
\begin{eqnarray*}
	&&H_k(X,\varphi^c) = H_k(X,\varphi^{c'})\ \mbox{for all}\ k\in\NN_0\\
	&&\mbox{(see Motreanu, Motreanu \& Papageorgiou \cite[Corollary 6.15, p. 145]{19})}.
\end{eqnarray*}

Assume that $\varphi\in C^1(X,\RR)$ satisfies the C-condition and that $K_\varphi$ is finite. We introduce the following items:
\begin{eqnarray*}
	&&M(t,u) = \sum_{k\geq0}{\rm rank}\,C_k(\varphi,u)t^k\ \mbox{for all}\ t\in\RR,\ u\in K_{\varphi}, \\
	&&P(t,\infty) = \sum_{k\geq0}{\rm rank}\,C_k(\varphi,\infty)t^k\ \mbox{for all}\ t\in\RR.
\end{eqnarray*}

Then the Morse relation says that there exists $Q(t)=\sum_{k\geq0}\hat\beta_kt^k$ a formal series in $t\in\RR$ with nonnegative integer coefficients $\hat\beta_k$ such that
\begin{equation}\label{eq4}
	\sum_{u\in K_\varphi} M(t,u) = P(t,u) + (1+t)Q(t)\ \mbox{for all}\ t\in\RR.
\end{equation}

Now let us fix some basic notation which we will use throughout this work. So, for $x\in\RR$, we set $x^\pm=\max\{\pm x,0\}$. Then for $u\in W^{1,p}(\Omega)$ we define
\begin{equation*}
	u^\pm(\cdot)=u(\cdot)^\pm.
\end{equation*}

We know that
\begin{equation*}
	u^\pm\in W^{1,p}(\Omega),\ u=u^+-u^-,\ |u|=u^++u^-.
\end{equation*}

If $u,v\in W^{1,p}(\Omega)$ and $v\leq u$, then by $[v,u]$ we denote the ordered interval in $W^{1,p}(\Omega)$ defined by
\begin{equation*}
	[v,u]=\{y\in W^{1,p}(\Omega):v(z)\leq y(z)\leq u(z)\ \mbox{for almost all}\ z\in\Omega\}.
\end{equation*}

By ${\rm int}_{C^1(\overline\Omega)}[v,u]$, we denote the interior in the $C^1(\overline\Omega)$-norm topology of $[v,u]\cap C^1(\overline\Omega)$. We also define
$$[u)=\{y\in W^{1,p}(\Omega): u(z)\leq y(z)\ \mbox{for almost all}\ z\in\Omega\}.$$

For $u,v\in W^{1,p}(\Omega)$ with $v(z)\neq0$ for almost all $z\in\Omega$, we define
\begin{equation*}
	R(u,v)(z) = |Du(z)|^p - |Dv(z)|^{p-2} (Dv(z), D\left(\frac{u^p}{v^{p-1}}\right)(z))_{\RR^N},\ z\in\Omega.
\end{equation*}

From the nonlinear Picone's identity of Allegretto \& Huang \cite{2}, we have the following property.

\begin{prop}\label{prop6}
	If $u,v:\Omega\rightarrow\RR$ are differentiable functions with $u(z)\geq0$ and $ v(z)>0$ for all $z\in\Omega$, then $R(u,v)(z)\geq0$ for almost all $z\in\Omega$ and equality holds if and only if $u=\xi v$ with $\xi\geq0$.
\end{prop}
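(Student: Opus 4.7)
The plan is to expand the quantity $R(u,v)(z)$ by performing the differentiation in $D(u^p/v^{p-1})$ explicitly, and then to recognize the resulting expression as a pointwise Young-type inequality for vectors in $\RR^N$ coming from the convexity of $x\mapsto|x|^p$.

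First, using $v>0$, I would compute
$$D\!\left(\frac{u^p}{v^{p-1}}\right) = p\left(\frac{u}{v}\right)^{p-1}Du - (p-1)\left(\frac{u}{v}\right)^{p}Dv,$$
and substitute into the definition of $R(u,v)$. After simplifying the inner product, this gives
$$R(u,v)(z) = |Du|^p - p\left(\frac{u}{v}\right)^{p-1}|Dv|^{p-2}(Dv,Du)_{\RR^N} + (p-1)\left(\frac{u}{v}\right)^{p}|Dv|^{p}.$$
Setting $a:=Du(z)$ and $b:=(u(z)/v(z))\,Dv(z)$, this reads
$$R(u,v)(z) = |a|^p - p|b|^{p-2}(b,a)_{\RR^N} + (p-1)|b|^p,$$
with the usual convention $|b|^{p-2}b:=0$ when $b=0$.

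Second, I would invoke the pointwise inequality
$$|a|^p - p|b|^{p-2}(b,a)_{\RR^N} + (p-1)|b|^p \;\geq\; 0 \qquad (a,b\in\RR^N),$$
which follows from the strict convexity of the $C^1$ function $F:\RR^N\to\RR$, $F(x)=|x|^p/p$, whose gradient is $\nabla F(x)=|x|^{p-2}x$: the convexity estimate $F(a)\geq F(b)+(\nabla F(b),a-b)_{\RR^N}$ rearranges precisely to the displayed inequality, and strict convexity forces equality only when $a=b$. This immediately yields $R(u,v)(z)\geq 0$ a.e., proving the first assertion. For the equality case, the convexity argument gives $a=b$, i.e. $Du=(u/v)Dv$, hence
$$D\!\left(\frac{u}{v}\right) = \frac{vDu-uDv}{v^2} = 0 \quad\mbox{a.e. in } \Omega.$$
Since $\Omega$ is a connected domain, $u/v$ is a nonnegative constant $\xi$, so $u=\xi v$ with $\xi\geq 0$; the converse direction is an immediate substitution.

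The step I expect to require the most care is the equality analysis at the degenerate points where $b=0$ (that is, where $u(z)=0$ or $Dv(z)=0$): for $1<p<2$ the factor $|b|^{p-2}$ is singular, so one cannot argue directly from $\nabla F(b)=\nabla F(a)$. This is resolved by working throughout with the convexity inequality for $F(x)=|x|^p/p$ itself (which is $C^1$ and strictly convex on all of $\RR^N$ for every $p>1$) rather than with the derivative identity; this makes the conclusion $a=b$ uniform across the zero set, and only at the final step does one invoke the connectedness of $\Omega$ to integrate $D(u/v)=0$ into the claim $u=\xi v$.
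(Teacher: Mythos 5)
Your argument is correct, but note that the paper does not prove this proposition at all: it is quoted verbatim from the nonlinear Picone identity of Allegretto \& Huang \cite{2}, so what you have written is in effect a self-contained reconstruction of that cited result. Your reconstruction follows the standard proof: the expansion of $D\left(\frac{u^p}{v^{p-1}}\right)$ and the resulting expression $|a|^p-p(|b|^{p-2}b,a)_{\RR^N}+(p-1)|b|^p$ with $a=Du$, $b=\frac{u}{v}Dv$ is exactly the identity in \cite{2}; where you differ slightly is in the nonnegativity step, which Allegretto--Huang obtain from Cauchy--Schwarz followed by Young's inequality (with a two-stage equality analysis), whereas you use the single convexity estimate $F(a)\geq F(b)+(\nabla F(b),a-b)_{\RR^N}$ for $F(x)=\frac{1}{p}|x|^p$. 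This is equivalent in substance but, as you point out, it treats the degenerate set where $u(z)=0$ or $Dv(z)=0$ uniformly and makes the equality case ($a=b$ by strict convexity) cleaner --- a genuine, if modest, gain. The only step to state with a bit more care is the last one: from $R(u,v)=0$ a.e.\ you get $D(u/v)=0$ only almost everywhere, and for functions that are merely pointwise differentiable the passage to ``$u/v$ is constant on the connected set $\Omega$'' needs an extra word (restriction to segments plus continuity, or simply the observation that in all applications in this paper $u,v\in C^1(\overline\Omega)$); this is also precisely how the conclusion is phrased in \cite{2}, namely $u=\xi v$ on each connected component, which here is all of the domain $\Omega$.
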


Finally, if $k,m\in\NN_0$, then by $\delta_{k,m}$ we denote the Kronecker symbol, that is,
\begin{equation*}
	\delta_{k,m}=\left\{
		\begin{array}{ll}
			1\ \mbox{if}\ k=m\\
			0\ \mbox{if}\ k\neq m.			
		\end{array}
	\right.
\end{equation*}

Next, we introduce our hypotheses on the two nonlinearities in the reaction of (\ref{eqp}).

\smallskip
$H(g):$ $g:\Omega\times\RR\rightarrow\RR$ is a Carath\'eodory function such that $g(z,0)=0$ for almost all $z\in\Omega$ and
\begin{itemize}
	\item [(i)] for every $\rho>0$, there exists $a_\rho\in L^\infty(\Omega)$ such that
		\begin{eqnarray*}
			|g(z,x)| \leq a_\rho(z)\ \mbox{for almost all}\ z\in\Omega\ \mbox{and all}\ |x|\leq\rho, \\
			0< g(z,x) x\ \mbox{for almost all}\ z\in\Omega\ \mbox{and all}\ x\in\RR\backslash\{0\};
		\end{eqnarray*}
	\item [(ii)] $\lim_{x\rightarrow0}\frac{g(z,x)}{|x|^{p-2}x}=0$ and there exists $1<q<p$ such that $\lim_{x\rightarrow\pm\infty}\frac{g(z,x)}{|x|^{q-2}x}=0$ uniformly for almost all $z\in\Omega$.
\end{itemize}

\smallskip
$H(f):$ $f:\Omega\times\RR\rightarrow\RR$ is a Carath\'eodory function such that $f(z,0)=0$ for almost all $z\in\Omega$ and
\begin{itemize}
	\item [(i)] for every $\rho>0$, there exists $\hat{a}_\rho\in L^\infty(\Omega)$ such that
		\begin{equation*}
			|f(z,x)|\leq\hat{a}_\rho(z)\ \mbox{for almost all}\ z\in\Omega\ \mbox{and all}\ |x|\leq\rho;
		\end{equation*}
	\item [(ii)] $\lim_{x\rightarrow\pm\infty}\frac{f(z,x)}{|x|^{p-2}x}=\hat\lambda_m$ uniformly for almost all $z\in\Omega$ for some $m\in\NN, m\geq2$ and if $F(z,x)=\int^x_0 f(z,s)ds$ then $\liminf_{x\rightarrow\pm\infty}\frac{p F(z,x) - f(z,x)x}{|x|^q}>0$ uniformly for almost all $z\in\Omega$;
	\item [(iii)] there exists $\vartheta\in L^\infty(\Omega)$ such that
		$$
		\begin{array}{ll}
			\vartheta(z)\leq\hat\lambda_1\ \mbox{for almost all}\ z\in\Omega, \vartheta\not\equiv\hat\lambda_1, \\
			\limsup_{x\rightarrow\pm\infty}\frac{f(z,x)}{|x|^{p-2}x}\leq \vartheta(z)\ \mbox{uniformly for almost all}\ z\in\Omega.
		\end{array}
		$$
\end{itemize}

$H_0:$ For almost all $z\in\Omega$ and every $\lambda>0$, the mapping $x\mapsto\lambda g(z,x) + f(z,x)$ is strictly increasing.

\begin{remark}\label{rem1}
	Hypothesis $H(g)(ii)$ implies that $g(z,\cdot)$ is strictly sublinear near $\pm\infty$ and $0$. On the other hand, hypothesis $H(f)(ii)$ implies that $f(z,\cdot)$ is $(p-1)$-linear near $\pm\infty$ and $0$. Note that hypotheses $H(g)(ii), H(f)(ii)$ imply that problem (\ref{eqp}) at $\pm\infty$ is resonant with respect to a nonprincipal variational eigenvalue of the Robin p-Laplacian. Clearly, the above hypotheses imply that
	\begin{equation}\label{eq5}
		|g(z,x)|,\ |f(z,x)|\leq c_4(1+|x|^{p-1})\ \mbox{for almost all}\ z\in\Omega\ \mbox{and all}\ x\in\RR
	\end{equation}
	with $c_4>0$. In the sequel, we shall denote $G(z,x)=\int^x_0 g(z,s)ds$.
\end{remark}

\begin{ex}\label{ex1}
	The following functions satisfy hypotheses $H(g), H(f)$. For the sake of simplicity, we drop the z-dependence.
	\begin{eqnarray*}
		&&g(x)=\left\{
			\begin{array}{ll}
				|x|^{r-2}x\ &\mbox{if}\ |x|\leq1 \\
				|x|^{s-2}x\ &\mbox{if}\ 1<|x|
			\end{array}
		\right.\quad 1<s<p<r; \\
		&&f(x)=\left\{
			\begin{array}{ll}
				\vartheta|x|^{p-2}x\ &\mbox{if}\ |x|\leq1 \\
				\hat\lambda_m|x|^{p-2}x + (\hat\lambda_m-\vartheta)|x|^{q-2}x\ &\mbox{if}\ 1<|x|
			\end{array}
		\right.,\ s<q<p,\ \vartheta<\hat\lambda_1.
	\end{eqnarray*}
\end{ex}

\section{Solutions of constant sign}

On account of hypotheses $H(g)(ii), H(f)(ii)$ and (\ref{eq5}), we see that given $\lambda>0$, $\epsilon>0$ and $r\in(p,p^*)$, we can find $c_5>0$ such that
\begin{equation}\label{eq6}
	[\lambda g(z,x) + f(z,x)]x \leq [\vartheta(z) + (1+\lambda)\varepsilon] |x|^p + c_5|x|^r\ \mbox{for almost all}\ z\in\Omega\ \mbox{and all}\ x\in\RR.
\end{equation}

This unilateral growth restriction on the reaction of problem (\ref{eqp}) leads to the following auxiliary parametric nonlinear Robin problem
\begin{equation}\label{eq7l}
	\left\{
		\begin{array}{ll}
			-\Delta_p u(z)=\left(\vartheta(z)+(1+\lambda)\varepsilon\right) |u(z)|^{p-2}u(z) + c_5|u(z)|^{r-2}u(z)\ \mbox{in}\ \Omega, \\
			\frac{\partial u}{\partial n_p} + \beta(z)|u|^{p-2}u=0\ \mbox{on}\ \partial\Omega.	
		\end{array}
	\right\}
\end{equation}

\begin{prop}\label{prop7}
	If hypothesis $H(\beta)$ holds and $\lambda>0$, then for every sufficiently small $\varepsilon>0$ problem (\ref{eq7l}) admits a positive solution
	\begin{equation*}
		\tilde{u}_\lambda\in D_+.
	\end{equation*}
	Moreover, since (\ref{eq7l}) is odd,  $\tilde{v}_\lambda=-\tilde{u}_\lambda\in -D_+$ is a negative solution of problem (\ref{eq7l}).
\end{prop}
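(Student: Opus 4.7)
The plan is to find $\tilde{u}_\lambda$ as a mountain-pass critical point of the truncated energy
\[
\psi_\lambda^+(u) = \frac{1}{p}\tau(u) - \frac{1}{p}\int_\Omega (\vartheta(z)+(1+\lambda)\varepsilon)(u^+)^p\, dz - \frac{c_5}{r}\int_\Omega (u^+)^r\, dz
\]
on $W^{1,p}(\Omega)$, which is $C^1$ by standard arguments, with $r\in(p,p^*)$ as in (\ref{eq6}). First I would fix $\varepsilon>0$ small enough so that, combining Lemma~\ref{lem5} with the Sobolev embedding $W^{1,p}(\Omega)\hookrightarrow L^p(\Omega)$, one obtains a constant $c_6>0$ with
\[
\tau(u) - \int_\Omega(\vartheta(z)+(1+\lambda)\varepsilon)|u|^p\, dz \ge c_6\|u\|^p \quad \text{for all } u\in W^{1,p}(\Omega).
\]
This is the critical quantitative input; the smallness of $\varepsilon$ depends on $\lambda$ through the factor $(1+\lambda)$.

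Next I would verify the geometry and compactness for $\psi_\lambda^+$. For small $\rho>0$ and $\|u\|=\rho$, the coercivity estimate above (applied to $u^+$, noting $\|u\|^p = \|u^+\|^p + \|u^-\|^p$ and using (\ref{eq1}) on $u^-$) and the continuous embedding into $L^r$ yield $\psi_\lambda^+(u) \ge (c_7/p)\rho^p - c_8\rho^r > 0 = \psi_\lambda^+(0)$. On the other hand, since $r>p$, evaluation along $t\hat{u}_1$ gives $\psi_\lambda^+(t\hat{u}_1)\to -\infty$ as $t\to+\infty$, so the mountain-pass geometry of Theorem~\ref{th1} holds with $u_0=0$ and $u_1=t_0\hat{u}_1$ for $t_0$ large. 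For the C-condition, taking a Cerami sequence $\{u_n\}$ and testing $(\psi_\lambda^+)'(u_n)$ against $-u_n^-$ forces $\tau(u_n^-)\to 0$, hence $u_n^-\to 0$ in $W^{1,p}(\Omega)$; combining the energy bound with the identity $\langle(\psi_\lambda^+)'(u_n),u_n\rangle = o(1)$ and exploiting the coefficient $\frac{r-p}{p}>0$ yields, via the coercivity estimate above, that $\|u_n^+\|$ is bounded. Weak convergence, together with Proposition~\ref{prop4} (the $(S)_+$ property of $A$) and compactness of the trace map, then gives strong convergence along a subsequence. The boundedness of $u_n^+$ is the step I expect to be the most delicate.

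Applying Theorem~\ref{th1} produces $\tilde{u}_\lambda\in W^{1,p}(\Omega)$ with $(\psi_\lambda^+)'(\tilde{u}_\lambda)=0$ and $\psi_\lambda^+(\tilde{u}_\lambda)\ge m_\rho>0$, hence $\tilde{u}_\lambda\ne 0$. Testing the Euler equation against $-\tilde{u}_\lambda^-$ and using $\beta>0$ gives $\tilde{u}_\lambda^-=0$, i.e.\ $\tilde{u}_\lambda\ge 0$, and then $\tilde{u}_\lambda$ is a weak nontrivial nonnegative solution of (\ref{eq7l}). Nonlinear regularity (Lieberman~\cite{17}) upgrades it to $C^{1,\alpha}(\overline{\Omega})$, and since the reaction satisfies $(\vartheta+(1+\lambda)\varepsilon)\tilde{u}_\lambda^{p-1}+c_5\tilde{u}_\lambda^{r-1} \ge -c_9 \tilde{u}_\lambda^{p-1}$, the nonlinear strong maximum principle together with the Hopf boundary-point lemma (Gasinski--Papageorgiou~\cite[p.~738]{10}) yield $\tilde{u}_\lambda\in D_+$. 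Finally, the reaction in (\ref{eq7l}) is odd in $u$ and the $p$-Laplacian and the Robin operator are $(p-1)$-homogeneous, so $\tilde{v}_\lambda := -\tilde{u}_\lambda \in -D_+$ solves (\ref{eq7l}) as claimed.
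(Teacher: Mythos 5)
Your proposal is correct and follows essentially the same route as the paper: the same truncated functional, the same choice of small $\varepsilon$ via Lemma \ref{lem5} and (\ref{eq1}), the unbounded direction along $t\hat{u}_1$, the mountain pass theorem, then testing with $-\tilde{u}_\lambda^-$, nonlinear regularity, the strong maximum principle with Hopf's lemma, and oddness. The only (harmless) deviation is that you verify the C-condition directly from a Cerami sequence, whereas the paper observes that the reaction satisfies the Ambrosetti--Rabinowitz condition and cites \cite[p.\ 343]{19}.
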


\begin{proof}
	Let $\Psi^+_\lambda: W^{1,p}(\Omega)\rightarrow\RR$ be the $C^1$-functional defined by
	\begin{equation*}
		\Psi^+_\lambda(u)=\frac{1}{p}\tau(u) - \frac{1}{p}\int_\Omega[\vartheta(z) + (1+\lambda)\varepsilon](u^+)^pdz - \frac{c_5}{r}||u^+||^r_r\ \mbox{for all}\ u\in W^{1,p}(\Omega).
	\end{equation*}
	
	We have
	\begin{eqnarray*}
		\Psi^+_\lambda(u)\geq c_6||u^-||^p + \frac{1}{p}\left(\tau(u^+) - \int_\Omega\vartheta(z)(u^+)^pdz\right) - \frac{(1+\lambda)\varepsilon}{p}||u^+||^p - c_7||u||^r\\
		 \mbox{for some}\ c_6, c_7>0\ \mbox{(see (\ref{eq1}))} \\
		 \geq c_6 ||u^-||^p + \frac{1}{p}[c_8 - (1+\lambda)\varepsilon]||u^+||^p - c_7||u||^r\ \mbox{for all}\ u\in W^{1,p}(\Omega),\ \mbox{some}\ c_8>0.
	\end{eqnarray*}
	
	Choosing $\varepsilon\in \left(0, \frac{c_8}{1+\lambda}\right)$, we consider that
	\begin{equation}\label{eq8}
		\Psi^+_\lambda(u)\geq c_9||u||^p - c_7||u||^r\ \mbox{for all}\ u\in W^{1,p}(\Omega),\ \mbox{some}\ c_9>0.
	\end{equation}
	
	Since $r>p$, it follows from (\ref{eq8}) that
	\begin{equation*}
		u=0\ \mbox{is a local minimizer of}\ \Psi^+_\lambda.
	\end{equation*}
	
	Then we can find so small $\rho\in(0,1)$   that
	\begin{equation}\label{eq9}\begin{array}{ll}
		&\displaystyle \Psi^+_\lambda(0)=0<\inf\{\Psi^+_\lambda(u):||u||=\rho\}=m^+_\lambda 	
	 \nonumber
	\end{array}
\end{equation}
	(see Aizicovici, Papageorgiou \& Staicu \cite{1}, proof of Proposition 29).
	
	For $t>0$, we have
	\begin{eqnarray}\label{eq10}
		\Psi^+_\lambda(t\hat{u}_1) & =&\frac{t^p}{p}\tau(\hat{u}_1) -  \frac{t^p}{p}\int_\Omega[\vartheta(z) + (1+\lambda)\varepsilon]\hat{u}_1^pdz - \frac{t^r}{r}||\hat{u}_1||^r_r \nonumber \\
		& \leq & \frac{t^p}{p}\int_\Omega[\hat\lambda_1 - \vartheta(z)]\hat{u}_1^pdz - \frac{t^r}{r}||\hat{u}_1||^r_r \nonumber \\
		& \leq & c_{10}t^p - c_{11}t^r\ \mbox{for some}\ c_{10},\,c_{11}>0.
	\end{eqnarray}
	
	However, $r>p$. So, from (\ref{eq10}) we have
	\begin{equation}\label{eq11}
		\Psi^+_\lambda(t\hat{u}_1)\rightarrow-\infty\ \mbox{as}\ t\rightarrow+\infty.
	\end{equation}
	
	Let $k_\lambda(z,x)$ be the Carath\'eodory function defined by
	\begin{equation*}
		k_\lambda(z,x) = [\vartheta(z) + (1+\lambda)\varepsilon] |x|^{p-2}x + c_5|x|^{r-2}x.
	\end{equation*}
	
	We set
	\begin{equation*}
		K_\lambda(z,x) = \int^x_0 k_\lambda(z,s)ds = \frac{1}{p}[\vartheta(z)+(1+\lambda)\varepsilon]|x|^p + \frac{c_5}{r}|x|^r.
	\end{equation*}
	
	Recall that $p<r$ and let $q\in(p,r)$. For sufficiently large $M>0$ we have
	\begin{eqnarray}
		&& 0<qK_\lambda(z,x)\leq k_\lambda(z,x)x\ \mbox{for almost all}\ z\in\Omega\ \mbox{and all}\ |x|\geq M, \nonumber \\
		&\Rightarrow & k_\lambda(z,\cdot)\ \mbox{satisfies the Ambrosetti-Rabinowitz condition (see \cite[p. 341]{19})} \nonumber \\
		&\Rightarrow & \Psi^+_\lambda(\cdot)\ \mbox{satisfies the C-condition (see \cite[p. 343]{19})}. \label{eq12}
	\end{eqnarray}
	
	Then (\ref{eq9}), (\ref{eq11}), (\ref{eq12}) permit the use of Theorem \ref{th1} (the mountain pass theorem). So, we can find $\tilde{u}_\lambda\in W^{1,p}(\Omega)$ such that
	\begin{equation*}
		\tilde{u}_\lambda\in K_{\Psi^+_\lambda}\ \mbox{and}\ \Psi^+_\lambda(0)=0<m^+_\lambda\leq \Psi^+_\lambda(\tilde{u}_\lambda).
	\end{equation*}
	
	Evidently, $\tilde{u}_\lambda\neq0$ and we have
	\begin{eqnarray}
		&& \langle A(\tilde{u}_\lambda),h\rangle + \int_{\partial\Omega}\beta(z)|\tilde{u}_\lambda|^{p-2}\tilde{u}_\lambda hd\sigma  \nonumber \\
	 &= & 
		\int_\Omega\left\{[\vartheta(z) + (1+\lambda)\epsilon](\tilde{u}^+\lambda)^{p-1} + c_5(\tilde{u}^+_\lambda)^{r-1}\right\}hdz\ \mbox{for all}\ h\in W^{1,p}(\Omega). \label{eq13}
	\end{eqnarray}
	
	In (\ref{eq13}) we choose $h=-\tilde{u}^-_\lambda\in W^{1,p}(\Omega)$. Then
	\begin{eqnarray*}
		&& \tau(\tilde{u}^-_\lambda)=0, \\
		&\Rightarrow & \tilde{u}_\lambda\geq0,\ \tilde{u}_\lambda\neq0\ \mbox{(see (\ref{eq1}))}.
	\end{eqnarray*}
	
	Then by (\ref{eq13}) we have
	\begin{eqnarray}
		-\Delta_p\tilde{u}_\lambda(z) = [\vartheta(z) + (1+\lambda)\varepsilon]\tilde{u}_\lambda(z)^{p-1} + c_5\tilde{u}_\lambda(z)^{r-1}\ \mbox{for almost all}\ z\in\Omega, \nonumber \\
		\frac{\partial \tilde{u}_\lambda}{\partial n_p} + \beta(z)\tilde{u}^{p-1}_\lambda=0\ \mbox{on}\ \partial\Omega\ \mbox{(see Papageorgiou \& R\u adulescu \cite{22})}. \label{eq14}
	\end{eqnarray}
	
	By (\ref{eq14}) and Proposition 7 of Papageorgiou \& R\u adulescu \cite{23}, we have
	\begin{equation*}
		\tilde{u}_\lambda\in L^\infty(\Omega).
	\end{equation*}
	
	So, we can apply Theorem 2 of Lieberman \cite{17} and conclude that
	\begin{equation*}
		\tilde{u}_\lambda\in C_+\backslash\{0\}.
	\end{equation*}
	
	It follows from (\ref{eq14}) that
	\begin{eqnarray*}
		& \Delta_p\tilde{u}_\lambda(z)\leq 0\ \mbox{for almost all}\ z\in\Omega, \\
		\Rightarrow & \tilde{u}_\lambda\in D_+\ \mbox{(see Gasinski \& Papageorgiou \cite[p. 738]{10})}.
	\end{eqnarray*}
	
	Since problem (\ref{eq7l}) is odd, we can deduce that $\tilde{v}_\lambda=-\tilde{u}_\lambda\in -D_+$ is a negative solution of problem (\ref{eq7l}).
\end{proof}

Next, we produce a uniform lower bound $\hat{c}>0$ for the solutions $\tilde{u}_\lambda$ of (\ref{eq7l}) for $\lambda>0$. It follows that $-\hat{c}<0$ is an upper bound for the negative solutions $\tilde{v}_\lambda$.

\begin{prop}\label{prop8}
	If hypothesis $H(\beta)$ holds, then there exists $\hat{c}>0$ such that
	\begin{equation*}
		\hat{c}\leq\tilde{u}_\lambda(z)\ \mbox{and}\ \tilde{v}_\lambda(z)\leq -\hat{c}\ \mbox{for all}\ z\in\overline\Omega,\ \lambda>0.
	\end{equation*}
\end{prop}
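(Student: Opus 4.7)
The plan is to argue by contradiction and compactness. Assume no such $\hat c$ exists; then there are sequences $\lambda_n > 0$ and $z_n \in \overline\Omega$ with $\tilde u_{\lambda_n}(z_n) \to 0$. The strategy is to show that a suitable subsequence of $\{\tilde u_{\lambda_n}\}$ converges in $C^1(\overline\Omega)$ to a nontrivial, nonnegative solution $\tilde u^*$ of a limit equation of the same type as (\ref{eq7l}), and then to invoke the nonlinear strong maximum principle to force $\tilde u^* \in D_+$, contradicting $\tilde u^*(z^*) = 0$.

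The crucial preparatory step is to exploit the fact that the $\varepsilon$ in Proposition \ref{prop7} is not rigidly tied to $\lambda$: the proof only required $(1+\lambda)\varepsilon < c_8$. Making the canonical choice $\varepsilon_\lambda := c_8/(2(1+\lambda))$ renders $(1+\lambda)\varepsilon_\lambda \equiv c_8/2$, so that the coefficients of the reaction of (\ref{eq7l}) stay in a fixed $L^\infty$-ball. With this choice the constants $c_9, c_{10}, c_{11}$ appearing in (\ref{eq8})--(\ref{eq10}) become $\lambda$-independent, and hence the mountain-pass level $c^+_\lambda := \Psi^+_\lambda(\tilde u_\lambda)$ satisfies $0 < \mu_1 \leq c^+_\lambda \leq \mu_2$ uniformly in $\lambda>0$. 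Combining this with the Palais--Smale identity at the critical point
$$c^+_\lambda = \Psi^+_\lambda(\tilde u_\lambda) - \tfrac{1}{p}\langle(\Psi^+_\lambda)'(\tilde u_\lambda),\tilde u_\lambda\rangle = \frac{c_5(r-p)}{pr}\,\|\tilde u_\lambda\|_r^r$$
produces $0 < c' \leq \|\tilde u_\lambda\|_r \leq C'$ and, after testing with $h=\tilde u_\lambda$, a uniform $W^{1,p}(\Omega)$-bound on the family. Subcriticality ($r<p^*$) combined with the nonlinear Moser iteration invoked via Proposition 7 of \cite{23} upgrades this to a uniform $L^\infty$-bound, and Lieberman's theorem \cite{17} then promotes it to a uniform bound in $C^{1,\alpha}(\overline\Omega)$ for some $\alpha\in(0,1)$.

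By Arzel\`a--Ascoli, along a subsequence, $\tilde u_{\lambda_n} \to \tilde u^*$ in $C^1(\overline\Omega)$ and $z_n \to z^* \in \overline\Omega$. Passing to the limit in the weak formulation of (\ref{eq7l}) produces a nonnegative $\tilde u^* \in C^{1,\alpha}(\overline\Omega)$ that solves
$$-\Delta_p \tilde u^* = \bigl(\vartheta(z) + \tfrac{c_8}{2}\bigr)(\tilde u^*)^{p-1} + c_5 (\tilde u^*)^{r-1}\ \text{in}\ \Omega,\qquad \frac{\partial \tilde u^*}{\partial n_p} + \beta(z)(\tilde u^*)^{p-1}=0\ \text{on}\ \partial\Omega,$$
with $\|\tilde u^*\|_r \geq c' > 0$, so $\tilde u^* \not\equiv 0$. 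Since $-\Delta_p\tilde u^* \geq -\|\vartheta\|_\infty(\tilde u^*)^{p-1}$, V\'azquez's nonlinear strong maximum principle combined with the Hopf boundary-point lemma (see \cite[p.~738]{10}) forces $\tilde u^* \in D_+$, contradicting $\tilde u^*(z^*) = \lim_n \tilde u_{\lambda_n}(z_n) = 0$. The bound $\tilde v_\lambda \leq -\hat c$ then follows at once from the odd symmetry of (\ref{eq7l}).

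The principal obstacle is that the naive a priori bounds on $\tilde u_\lambda$ degenerate as $\lambda \to \infty$ because the coefficient $(1+\lambda)\varepsilon$ \emph{a priori} diverges; the proof pivots on re-selecting $\varepsilon=\varepsilon_\lambda$ so that $(1+\lambda)\varepsilon_\lambda$ is $\lambda$-independent, thereby decoupling the compactness argument from the growth of the parameter and keeping both the regularity constants and the mountain-pass energy bounds uniform.
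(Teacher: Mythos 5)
Your argument is correct in its essentials, but it takes a genuinely different route from the paper's. The paper never needs uniform a priori estimates on the family $\{\tilde u_\lambda\}_{\lambda>0}$: it introduces the $\lambda$-independent auxiliary problem (\ref{eq15}), $-\Delta_p u = c_5|u|^{r-2}u$ with the same Robin condition, proves that its set $S_+$ of positive solutions has a smallest element $\overline u_*\in D_+$ (downward directedness plus a monotone-sequence argument ruling out the zero limit), and then, for each $\lambda$, truncates $c_5(x^+)^{r-1}$ at $\tilde u_\lambda$ and minimizes the resulting coercive functional to obtain some $\overline u\in S_+$ with $0\le\overline u\le\tilde u_\lambda$; hence $\hat c=\min_{\overline\Omega}\overline u_*>0$ works, and it does so for \emph{any} positive solution $\tilde u_\lambda$ of (\ref{eq7l}), whatever admissible $\varepsilon$ was used. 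You instead renormalize $\varepsilon=\varepsilon_\lambda$ so that $(1+\lambda)\varepsilon_\lambda$ is constant (legitimate, since the proof of Proposition \ref{prop7} only needs $(1+\lambda)\varepsilon<c_8$ and all later uses of $\tilde u_\lambda$, via (\ref{eq6}), are insensitive to this choice), which makes the whole family solve one fixed equation; you then combine the uniform mountain-pass level with the identity $\Psi^+_\lambda(\tilde u_\lambda)-\frac{1}{p}\langle(\Psi^+_\lambda)'(\tilde u_\lambda),\tilde u_\lambda\rangle=\frac{c_5(r-p)}{pr}\|\tilde u_\lambda\|^r_r$ to keep $\|\tilde u_\lambda\|_r$ away from zero, pass through uniform $W^{1,p}$--$L^\infty$--$C^{1,\alpha}$ estimates and Arzel\`a--Ascoli to extract a $C^1(\overline\Omega)$ limit $\tilde u^*\not\equiv0$, and use the strong maximum principle together with the Hopf boundary point lemma (where $\beta>0$ is essential) to contradict a vanishing infimum. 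The paper's comparison argument buys independence of how $\tilde u_\lambda$ was produced and avoids any uniform regularity estimate; yours avoids the minimal-solution machinery (directedness, the Hu--Papageorgiou lemma) at the cost of (i) covering only the mountain-pass solutions built with your normalized $\varepsilon_\lambda$ (which suffices for the rest of the paper, but is a narrower statement than the paper proves), and (ii) relying on the standard but unstated fact that the $L^\infty$ bound from the Moser iteration and the $C^{1,\alpha}$ bound from Lieberman's theorem depend only on the $W^{1,p}$-norm, respectively the $L^\infty$-norm, and the (now $\lambda$-free) structure constants; since Proposition 7 of \cite{23} and Theorem 2 of \cite{17} are invoked per solution, you should make this uniformity explicit.
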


\begin{proof}
	We consider the following nonlinear Robin problem
	\begin{equation}\label{eq15}
		\left\{
			\begin{array}{ll}
				-\Delta_p u(z) = c_5|u(z)|^{r-2}u(z)\ \mbox{in}\ \Omega, \\
				\frac{\partial u}{\partial n_p} + \beta(z)|u|^{p-2}u=0\ \mbox{on}\ \partial\Omega.
			\end{array}
		\right\}
	\end{equation}
	
	We first show that problem (\ref{eq15}) has a positive solution. So, let $\xi: W^{1,p}(\Omega)\rightarrow\RR$ be the $C^1$-functional defined by
	\begin{equation*}
		\xi(u) = \frac{1}{p}\tau(u) - \frac{c_5}{r}||u^+||^r_r\ \mbox{for all}\ u\in W^{1,p}(\Omega).
	\end{equation*}
	
	Using (\ref{eq1}) we have
	\begin{eqnarray*}
		& \xi(u)\geq c_{12}||u||^p - c_{13}||u||^r\ \mbox{for some}\ c_{12},c_{13}>0,\ \mbox{all}\ u\in W^{1,p}(\Omega), \\
		\Rightarrow & u=0\ \mbox{is an isolated local minimizer of}\ \xi(\cdot)\ \mbox{(recall that $r>p$)}.
	\end{eqnarray*}
	
	So, we can find $\rho\in(0,1)$ small such that
	\begin{equation}\label{eq16}
		\xi(0)=0<\inf\{\xi(u):||u||=\rho\}=m_\xi.
	\end{equation}
	
	Also, if $u\in D_+$, then
	\begin{equation}\label{eq17}
		\xi(tu)\rightarrow-\infty\ \mbox{as}\ t\rightarrow+\infty\ \mbox{(again use the fact that $r>p$).}
	\end{equation}
	
	Finally, since the reaction $f(x)=c_5(x^+)^{p-1}$ satisfies the Ambrosetti-Rabinowitz condition on $\RR_+=[0,+\infty)$, we can infer that
	\begin{equation}\label{eq18}
		\xi(\cdot)\ \mbox{satisfies the C-condition}.
	\end{equation}
	
	Then (\ref{eq16}), (\ref{eq17}), (\ref{eq18}) permit the use of Theorem \ref{th1} (the mountain pass theorem) and we obtain $\overline{u}\in W^{1,p}(\Omega)$ such that
	\begin{equation}\label{eq19}
		\overline{u}\in K_\xi\ \mbox{and}\ \xi(0)=0<m_\xi\leq\xi(\overline{u}).
	\end{equation}
	
	From (\ref{eq19}) we can infer that $\overline{u}\neq0$ and
	\begin{eqnarray*}
		&& \xi'(\overline{u})=0, \\
		&\Rightarrow & \langle A(\overline{u}),h\rangle + \int_{\partial\Omega}\beta(z)|\overline{u}|^{p-2}\overline{u}hd\sigma = c_5\int_\Omega (\overline{u}^+)^{p-1}hdz\ \mbox{for all}\ h\in W^{1,p}(\Omega).
	\end{eqnarray*}
	
	We choose $h=-\overline{u}^-\in W^{1,p}(\Omega)$. Then
	\begin{eqnarray*}
		& \tau(\overline{u}^-) = 0, \\
		\Rightarrow & \overline{u}\geq0,\ \overline{u}\neq0\ \mbox{(see (\ref{eq1}))}.
	\end{eqnarray*}
	
	So, $\overline{u}$ is a positive solution of (\ref{eq15}). As before, the nonlinear regularity theory and the nonlinear Hopf boundary point theorem (see \cite[p. 738]{10}) imply that $\overline{u}\in D_+$.
	
	Next, we show that there is a smallest positive solution for problem (\ref{eq15}). We first observe that from Papageorgiou, R\u adulescu \& Repov\v{s} \cite{24} (see the proof of Proposition 7), we know that the set $S_+$ of positive solutions of (\ref{eq15}) is downward directed (that is, if $\overline{u}_1,\, \overline{u}_2\in S_+$, then we can find $\overline{u}\in S_+$ such that $\overline{u}\leq\overline{u}_1$, $\overline{u}\leq\overline{u}_2$).
	Invoking Lemma 3.10 of Hu \& Papageorgiou \cite[p. 178]{14}, we can find a decreasing sequence $\{\overline{u}_n\}_{n\geq1}\subseteq S_+\subseteq D_+$  such that
	\begin{equation*}
		\inf S_+ = \inf_{n\geq1}\overline{u}_n.
	\end{equation*}
	
	We have
	\begin{equation}\label{eq20}
		\langle A(\overline{u}_n),h\rangle + \int_{\partial\Omega}\beta(z)\overline{u}^{p-1}_n hd\sigma = c_5\int_\Omega\overline{u}^{r-1}_n hdz\ \mbox{for all}\ h\in W^{1,p}(\Omega),\ n\in\NN.
	\end{equation}
	
	It follows from (\ref{eq20}) that $\{\overline{u}_n\}_{n\geq1}\subseteq W^{1,p}(\Omega)$ is bounded. So, we may assume that
	\begin{equation}\label{eq21}
		\overline{u}_n\xrightarrow{w}\overline{u}_*\ \mbox{in}\ W^{1,p}(\Omega), \overline{u}_n\rightarrow\overline{u}_*\ \mbox{in}\ L^r(\Omega)\ \mbox{and}\ L^p(\partial\Omega).
	\end{equation}
	
	Suppose that $\overline{u}_*\equiv0$. Let $\overline{y}_n=\frac{\overline{u}_n}{||\overline{u}_n||}$, $n\in\NN$. Then $||\overline{y}_n||=1$ for all $n\in\NN$ and so we may assume that
	\begin{equation}\label{eq22}
		\overline{y}_n\xrightarrow{w}\overline{y}\ \mbox{in}\ W^{1,p}(\Omega), \overline{y}_n\rightarrow\overline{y}\ \mbox{in}\ L^p(\Omega)\ \mbox{and}\ L^p(\partial\Omega).
	\end{equation}
	
	From (\ref{eq20}) we have
	\begin{equation*}
		\langle A(\overline{y}_n),h\rangle + \int_{\partial\Omega}\beta(z)\overline{y}^{p-1}_nhd\sigma = c_5\int_\Omega \overline{u}^{r-p}_n\ \overline{y}^{p-1}_n\ hdz\ \mbox{for all}\ h\in W^{1,p}(\Omega),\ n\in\NN.
	\end{equation*}
	
	Choosing $h=\overline{y}_n-\overline{y}\in W^{1,p}(\Omega)$, passing to the limit as $n\rightarrow\infty$, and using (\ref{eq22}) and the fact that $\overline{u}_*=0$, we obtain
	\begin{eqnarray}
		&& \lim_{n\rightarrow\infty} \langle A(\overline{y}_n), \overline{y}_n-y\rangle = 0, \nonumber \\
		&\Rightarrow & \overline{y}_n\rightarrow\overline{y}\ \mbox{in}\ W^{1,p}(\Omega),\ ||\overline{y}||=1.\ \mbox{(see Proposition \ref{prop4})}. \label{eq23}
	\end{eqnarray}
	
	Passing to the limit as $n\rightarrow\infty$ in (\ref{eq22}), and using (\ref{eq23}) and the fact that $\overline{u}_+=0$, we obtain
	\begin{eqnarray*}
		&& \langle A(\overline{y}),h\rangle + \int_{\partial\Omega}\beta(z)(z)\overline{y}^{p-1}hd\sigma=0\ \mbox{for all}\ h\in W^{1,p}(\Omega), \\
		&\Rightarrow & \tau(\overline{y}) = 0, \\
		&\Rightarrow & \overline{y}=0\ \mbox{(see (\ref{eq1})), contradicting (\ref{eq23})}.
	\end{eqnarray*}
	
	So, $\overline{u}_*\neq0$. In (\ref{eq20}) we choose $h=\overline{u}_n-\overline{u}_*\in W^{1,p}(\Omega)$, pass to the limit as $n\rightarrow\infty$, and use (\ref{eq21}) and Proposition \ref{prop4}. Then
	\begin{equation*}
		\overline{u}_n\rightarrow\overline{u}_*\ \mbox{in}\ W^{1,p}(\Omega).
	\end{equation*}
	
	Hence, in the limit as $n\rightarrow\infty$ in (\ref{eq20}), we obtain
	\begin{eqnarray*}
		&&\langle A(\overline{u}_*),h\rangle + \int_{\partial\Omega}\beta(z)\overline{u}^{p-1}_*hd\sigma = c_5\int_\Omega\overline{u}^{r-1}_*hdz\ \mbox{for all}\ h\in W^{1,p}(\Omega) \\
		&\Rightarrow & \overline{u}_*\in S_+\ \mbox{and}\ \overline{u}_*=\inf S_+.
	\end{eqnarray*}
	
	Now let $\tilde{u}_\lambda\in D_+$ be a solution of (\ref{eq7l}) (see Proposition \ref{prop7}). We consider the Carath\'eodory function $\gamma(z,x)$ defined by
	\begin{equation}\label{eq24}
		\gamma(z,x)=\left\{
			\begin{array}{ll}
				c_5 (x^+)^{r-1}\ &\mbox{if}\ x\leq\tilde{u}_\lambda(z) \\
				c_5\tilde{u}_\lambda(z)^{r-1}\ &\mbox{if}\ \tilde{u}_\lambda(z)<x.
			\end{array}
		\right.
	\end{equation}
	
	We set $\Gamma(z,x)=\int^x_0\gamma(z,s)ds$ and consider the $C^1$-functional $\hat\xi: W^{1,p}(\Omega)\rightarrow\RR$ defined by
	\begin{equation*}
		\hat\xi(u) = \frac{1}{p}\tau(u) - \int_\Omega\Gamma(z,u)dz\ \mbox{for all}\ u\in W^{1,p}(\Omega).
	\end{equation*}
	
	It follows by (\ref{eq1}) and (\ref{eq24}) that $\hat\xi(\cdot)$ is coercive. Also, it is sequentially weakly lower semicontinuous. So, we can find $\overline{u}\in W^{1,p}(\Omega)$ such that
	\begin{equation}\label{eq25}
		\hat\xi(\overline{u})=\inf\{\hat\xi(u): u\in W^{1,p}(\Omega)\}.
	\end{equation}
	
	As before, since $r>p$, we have $\hat\xi(\overline{u})<0=\hat\xi(0)$, hence $\overline{u}\neq0$. From (\ref{eq25}) we have
	\begin{equation}\label{eq26}
		\langle A(\overline{u}),h\rangle + \int_{\partial\Omega}\beta(z)|\overline{u}|^{p-2}\overline{u}hd\sigma = \int_\Omega\gamma(z,\overline{u})hdz\ \mbox{for all}\ h\in W^{1,p}(\Omega).
	\end{equation}
	
	In (\ref{eq26})  we first choose $h=-\overline{u}^-\in W^{1,p}(\Omega)$. We obtain
	\begin{eqnarray*}
		& \tau(\overline{u}^-)=0\ \mbox{(see (\ref{eq24}))}, \\
		\Rightarrow & \overline{u}\geq0,\ \overline{u}\neq0\ \mbox{(see (\ref{eq1}))}.
	\end{eqnarray*}
	
	Next, we choose $h=(\overline{u}-\tilde{u}_\lambda)^+\in W^{1,p}(\Omega)$ in (\ref{eq26}). Then
	\begin{eqnarray*}
		&& \langle A(\overline{u}),(\overline{u}-\tilde{u}_\lambda)^+\rangle + \int_{\partial\Omega}\beta(z)\overline{u}^{p-1}(\overline{u}-\tilde{u}_\lambda)^+d\sigma \\
		&= & \int_\Omega c_5\tilde{u}^{r-1}_\lambda (\overline{u}-\tilde{u}_\lambda)^+dz \\
		&\leq & \int_\Omega([\vartheta(z) + (1+\lambda)\varepsilon]\tilde{u}^{p-1}_\lambda + c_5\tilde{u}^{r-1}_\lambda)(\overline{u}-\tilde{u}_\lambda)^+dz \\
		&= & \langle A(\tilde{u}_\lambda), (\overline{u}-\tilde{u}_\lambda)^+\rangle + \int_{\partial\Omega}\beta(z)\tilde{u}^{p-1}_\lambda(\overline{u}-\tilde{u}_\lambda)^+d\sigma \\
		&\Rightarrow & \overline{u}\leq \tilde{u}_\lambda\ \mbox{(by Proposition \ref{prop4})}.
	\end{eqnarray*}
	
	So, we have proved that
	\begin{equation}
		\overline{u}\in [0,\tilde{u}_\lambda],\ \overline{u}\neq0.
		\label{eq26'}
	\end{equation}
	
	It follows by (\ref{eq24}), (\ref{eq26}) and \eqref{eq26'} that $\overline{u}\in S_+\subseteq D_+$. Therefore
	\begin{equation*}
		0<\hat{c}=\min_{\overline\Omega}\overline{u}_*\leq\tilde{u}_\lambda\ \mbox{for all}\ \lambda>0.
	\end{equation*}
	
	The oddness of (\ref{eq15}) implies that $\tilde{v}_\lambda\leq-\hat{c}<0$ for all $\lambda>0$.
\end{proof}

Now we are ready to produce two nontrivial constant sign solutions when $\lambda>0$ is large enough.
\begin{prop}\label{prop9}
	If hypotheses $H(\beta), H(g), H(f), H_0$ hold, then for sufficiently large $\lambda>0$ problem (\ref{eqp}) has two constant sign solutions
	\begin{equation*}
		u_0\in {\rm int}_{C^1(\overline\Omega)}[0,\tilde{u}_\lambda], v_0\in {\rm int}_{C^1(\overline\Omega)}[\tilde{v}_\lambda,0].
	\end{equation*}
	with $\tilde{u}_\lambda\in D_+$ and $\tilde{v}_\lambda\in -D_+$ constant sign solutions of (\ref{eq7l}).
\end{prop}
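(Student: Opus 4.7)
The plan is to produce $u_0$ as the global minimizer of a truncated $C^1$-functional supported on $[0,\tilde u_\lambda]$, and then to use Proposition \ref{prop3} to place it strictly inside the order interval in the $C^1(\overline\Omega)$-topology. Introduce the Carath\'eodory truncation
$$
\hat h^+_\lambda(z,x)=\begin{cases} 0 & \mbox{if}\ x\le 0,\\ \lambda g(z,x)+f(z,x) & \mbox{if}\ 0<x\le\tilde u_\lambda(z),\\ \lambda g(z,\tilde u_\lambda(z))+f(z,\tilde u_\lambda(z)) & \mbox{if}\ \tilde u_\lambda(z)<x,\end{cases}
$$
set $\hat H^+_\lambda(z,x)=\int_0^x\hat h^+_\lambda(z,s)ds$, and define $\hat\varphi^+_\lambda\in C^1(W^{1,p}(\Omega),\RR)$ by $\hat\varphi^+_\lambda(u)=\frac{1}{p}\tau(u)-\int_\Omega\hat H^+_\lambda(z,u)dz$. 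Because the truncated reaction is $L^\infty$-bounded (apply (\ref{eq5}) at $\tilde u_\lambda\in L^\infty(\Omega)$), $\hat\varphi^+_\lambda$ is coercive and sequentially weakly lower semicontinuous, hence attains its infimum at some $u_0\in W^{1,p}(\Omega)$.

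The quantitative step is to verify $u_0\neq 0$ for $\lambda$ large. By Proposition \ref{prop8}, $\tilde u_\lambda\ge\hat c>0$ on $\overline\Omega$ uniformly in $\lambda$, so one can fix $t>0$ small (independent of $\lambda$) with $t\hat u_1\le\tilde u_\lambda$. On this direction $\hat\varphi^+_\lambda(t\hat u_1)=\frac{t^p}{p}\tau(\hat u_1)-\lambda\int_\Omega G(z,t\hat u_1)dz-\int_\Omega F(z,t\hat u_1)dz$; the growth of $F$ at the origin coming from $H(f)(iii)$ together with the variational characterization $\hat\lambda_1=\tau(\hat u_1)/\|\hat u_1\|_p^p$ bounds the $\lambda$-independent part, while $H(g)(i)$ ensures $G(z,t\hat u_1)>0$ pointwise. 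Thus for $\lambda$ sufficiently large, $\hat\varphi^+_\lambda(t\hat u_1)<0=\hat\varphi^+_\lambda(0)$, proving $u_0\neq 0$.

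Testing $(\hat\varphi^+_\lambda)'(u_0)=0$ with $-u_0^-$ gives $\tau(u_0^-)=0$, so $u_0\ge 0$, and testing with $(u_0-\tilde u_\lambda)^+$, using the pointwise inequality $\lambda g(z,\tilde u_\lambda)+f(z,\tilde u_\lambda)\le[\vartheta(z)+(1+\lambda)\varepsilon]\tilde u_\lambda^{p-1}+c_5\tilde u_\lambda^{r-1}$ (divide (\ref{eq6}) by $x=\tilde u_\lambda>0$) against the equation satisfied by $\tilde u_\lambda$, and Proposition \ref{prop4}, forces $u_0\le\tilde u_\lambda$. So $u_0\in[0,\tilde u_\lambda]\setminus\{0\}$ solves (\ref{eqp}), and the nonlinear regularity of Lieberman together with the Hopf boundary point theorem upgrade this to $u_0\in D_+$.

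The last and hardest step is the strong comparison. Apply Proposition \ref{prop3} to $(u_0,\tilde u_\lambda)$ with right-hand sides $h_1=\lambda g(\cdot,u_0)+f(\cdot,u_0)$ and $h_2=[\vartheta+(1+\lambda)\varepsilon]\tilde u_\lambda^{p-1}+c_5\tilde u_\lambda^{r-1}$: the strict monotonicity hypothesis $H_0$ combined with (\ref{eq6}) gives $h_1\le h_2$, the extra $(1+\lambda)\varepsilon$ and $c_5\tilde u_\lambda^{r-1}$ cushions guarantee $h_1\not\equiv h_2$, and the Robin condition with $\beta>0$ and $u_0,\tilde u_\lambda\in D_+$ makes both outward normal derivatives strictly negative. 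Hence $\tilde u_\lambda-u_0\in D_+$, and combined with $u_0\in D_+$ this yields $u_0\in\mathrm{int}_{C^1(\overline\Omega)}[0,\tilde u_\lambda]$. The negative solution $v_0\in\mathrm{int}_{C^1(\overline\Omega)}[\tilde v_\lambda,0]$ is produced by the symmetric construction on $[\tilde v_\lambda,0]$, exploiting the oddness of inequality (\ref{eq6}) and of the auxiliary problem. I expect the main obstacle to be precisely this comparison: one must check that the cushions built into the auxiliary problem survive after $\varepsilon$ has been fixed small by the existence step, so that the strictness $h_1\not\equiv h_2$ needed by Proposition \ref{prop3} is preserved.
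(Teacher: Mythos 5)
Your proposal is correct and follows essentially the same route as the paper: minimize the reaction truncated at $\tilde u_\lambda$, show nontriviality for large $\lambda$ via a fixed admissible test function (the paper uses the constant $c\in(0,\hat c)$ where you use $t\hat u_1$, an immaterial difference), obtain $u_0\in[0,\tilde u_\lambda]\cap D_+$ by the standard $-u_0^-$ and $(u_0-\tilde u_\lambda)^+$ tests plus Lieberman regularity and Hopf, and conclude with Proposition \ref{prop3} using $H_0$ and (\ref{eq6}), mirrored for $v_0$. The strictness issue you flag is resolved exactly as in the paper, via the strict monotonicity in $H_0$ and the $\varepsilon$- and $c_5$-cushions in (\ref{eq6}).
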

\begin{proof}
	We introduce the following truncation of the reaction in problem (\ref{eqp}):
	\begin{equation}\label{eq27}
		\eta^+_\lambda(z,x) = \left\{
			\begin{array}{ll}
				\lambda g(z,x^+) + f(z,x^+)\ &\mbox{if}\ x\leq\tilde{u}_\lambda(z) \\
				\lambda g(z,\tilde{u}_\lambda(z)) + f(z,\tilde{u}_\lambda(z))\ &\mbox{if}\ \tilde{u}_\lambda(z)<x.
			\end{array}
		\right.
	\end{equation}
	
	This is a Carath\'eodory function. We set $H^+_\lambda(z,x)=\int^x_0\eta^+_\lambda(z,s)ds$ and consider the $C^1$-functional $d^+_\lambda:W^{1,p}(\Omega)\rightarrow\RR$ defined by
	\begin{equation*}
		d^+_\lambda(u) = \frac{1}{p}\tau(u) - \int_\Omega H^+_\lambda(z,u)dz\ \mbox{for all}\ u\in W^{1,p}(\Omega).
	\end{equation*}
	
	From (\ref{eq1}) and (\ref{eq27}) we see that $d^+_\lambda(\cdot)$ is coercive. Also, it is sequentially weakly lower semicontinuous. So, we can find $u_0\in W^{1,p}(\Omega)$ such that
	\begin{equation}\label{eq28}
		d^+_\lambda(u_0) = \inf\{d^+_\lambda(u): u\in W^{1,p}(\Omega)\}.
	\end{equation}
	
	Let $c\in(0,\hat{c})$ with $\hat{c}>0$ as in Proposition \ref{prop8}. Then for all $\lambda>0$, we have
	\begin{equation*}
		d^+_\lambda(c) = \frac{c^p}{p}\int_{\partial\Omega}\beta(z)d\sigma - \lambda\int_\Omega G(z,c)cdz - \int_\Omega F(z,c)cdz.
	\end{equation*}
	
	Note that $\int_\Omega G(z,c)cdz>0$ (see hypothesis $H(g)(i)$). So,
	\begin{eqnarray*}
		&& d^+_\lambda(c)<0\ \mbox{for sufficiently large}\ \lambda>0, \\
		&\Rightarrow & d^+_\lambda(u_0)<0 = d^+_\lambda(0)\ \mbox{for sufficiently large}\ \lambda>0, \\
		&\Rightarrow & u_0\neq0.
	\end{eqnarray*}
	
	From (\ref{eq28}) we have
	\begin{equation}\label{eq29}
		\langle A(u_0),h\rangle + \int_{\partial\Omega}\beta(z)|u_0|^{p-2}u_0hd\sigma = \int_\Omega\eta^+_\lambda(z,u_0)hdz\ \mbox{for all}\ h\in W^{1,p}(\Omega).
	\end{equation}
	
	In (\ref{eq29}) we first choose $h=-u^-_0\in W^{1,p}(\Omega)$ and obtain
	\begin{equation*}
		u_0\geq0,\ u_0\neq0.
	\end{equation*}
	
	Then in (\ref{eq29}) we choose $h=(u_0-\tilde{u}_\lambda)^+\in W^{1,p}(\Omega)$. As in the proof of Proposition \ref{prop8}, using this time (\ref{eq6}), we obtain
	\begin{equation*}
		u_0\leq \tilde{u}_\lambda.
	\end{equation*}
	
	So, we have proved that
	\begin{equation}\label{eq30}
		u_0\in[0,\tilde{u}_\lambda],\ u_0\neq0.
	\end{equation}
	
	By (\ref{eq27}), (\ref{eq29}), (\ref{eq30}) and Theorem 2 of Lieberman \cite{17}, we have that
	\begin{equation*}
		u_0\in C_+\backslash\{0\}\ \mbox{is a positive solution of (\ref{eqp})},\ \lambda>0\ \mbox{large enough}.
	\end{equation*}
	
	Therefore we have
	\begin{eqnarray*}
		&& \Delta_p u_0(z)\leq0\ \mbox{for almost all}\ z\in\Omega, \\
		&\Rightarrow & u_0\in D_+\ \mbox{(see Gasinski \& Papageorgiou \cite[p. 738]{10})}.
	\end{eqnarray*}
	
	Also, we have
	\begin{eqnarray*}
		& -\Delta_p u_0(z) &= \lambda g(z,u_0(z)) + f(z,u_0(z)) \\
		& &\leq \lambda g(z,\tilde{u}_\lambda(z)) + f(z,\tilde{u}_\lambda(z))\ \mbox{(see (\ref{eq30}) and hypothesis $H_0$)} \\
		& &< [\vartheta(z) + (1+\lambda)\varepsilon]\tilde{u}_\lambda(z)^{p-1} + c_5\tilde{u}_\lambda (z)^{r-1}\ \mbox{(see (\ref{eq6}))} \\
		& &= -\Delta_p\tilde{u}_\lambda(z)\ \mbox{for almost all}\ z\in\Omega, \\
		\Rightarrow & \tilde{u}_\lambda-u_0\in &{\rm int}\, C_+\ \mbox{(see Proposition \ref{prop3})}.
	\end{eqnarray*}
	
	We conclude that
	\begin{equation*}
		u_0\in {\rm int}_{C^1(\overline\Omega)}[0,\tilde{u}_\lambda].
	\end{equation*}
	
	For the negative solution, we introduce the Carath\'eodory function $\eta^-_\lambda(z,x)$ defined by
	\begin{equation*}
		\eta^-_\lambda(z,x)=\left\{
			\begin{array}{ll}
				\lambda g(z,\tilde{v}_\lambda(z)) + f(z,\tilde{v}_\lambda(z))\ &\mbox{if}\ x<\tilde{v}_\lambda(z) \\
				\lambda g(z,-x^-) + f(z,-x^-)\ &\mbox{if}\ \tilde{v}_\lambda(z)\leq x.
			\end{array}
		\right.
	\end{equation*}
	
	We set $H^-_\lambda(z,x)=\int^x_0\eta^-_\lambda(z,s)ds$ and consider the $C^1$-functional $d^-_\lambda:W^{1,p}(\Omega)\rightarrow\RR$ defined by
	\begin{equation*}
		d^-_\lambda(u) = \frac{1}{p}\tau(u) - \int_\Omega H^-_\lambda(z,u)dz\ \mbox{for all}\ u\in W^{1,p}(\Omega).
	\end{equation*}
	
	Working as above, this time with the functional $d^-_\lambda(\cdot)$, we produce a solution $v_0$ of (\ref{eqp}) for large enough $\lambda>0$ such that
	\begin{equation*}
		v_0\in -D_+,\ v_0\in {\rm int}_{C^1(\overline\Omega)}[\tilde{v}_\lambda,0].
	\end{equation*}
The proof is now complete.
\end{proof}

Using $u_0\in D_+$ and $v_0\in-D_+$ from Proposition \ref{prop9}, we will produce two more constant sign solutions.

\begin{prop}\label{prop10}
	If hypotheses $H(\beta), H(g), H(f), H_0$ hold, then for large enough $\lambda>0$, problem (\ref{eqp}) admits two more constant sign solutions $\hat{u}\in D_+$ and $\hat{v}\in -D_+$ such that
	\begin{equation*}
		\hat{u}-u_0\in {\rm int}\, C_+,\ v_0-\hat{v}\in {\rm int}\,C_+.
	\end{equation*}
	with $u_0\in D_+$ and $v_0\in-D_+$ the solutions from Proposition \ref{prop9}.
\end{prop}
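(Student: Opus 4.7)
The plan is to produce $\hat u$ (and, symmetrically, $\hat v$) by setting up a new truncated functional for which $u_0$ is a local minimizer, and then apply the mountain pass theorem, using the resonance at a nonprincipal eigenvalue $\hat\lambda_m$ ($m\ge 2$) to secure unboundedness below. Focus on the positive case. Introduce the Carath\'eodory truncation
\[
\hat\eta^+_\lambda(z,x)=\begin{cases} \lambda g(z,u_0(z))+f(z,u_0(z)) & \text{if } x\le u_0(z),\\ \lambda g(z,x)+f(z,x) & \text{if } u_0(z)<x,\end{cases}
\]
with primitive $\hat H^+_\lambda(z,x)=\int_0^x\hat\eta^+_\lambda(z,s)\,ds$, and the $C^1$-functional $\hat d^+_\lambda(u)=\frac{1}{p}\tau(u)-\int_\Omega\hat H^+_\lambda(z,u)\,dz$.

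The first key step is that every critical point $u$ of $\hat d^+_\lambda$ satisfies $u\ge u_0$: testing $(\hat d^+_\lambda)'(u)=0$ with $h=(u_0-u)^+$ and subtracting the variational identity for $u_0$ (a solution of $(P_\lambda)$ by Proposition~\ref{prop9}) yields
\[
\langle A(u)-A(u_0),(u_0-u)^+\rangle+\int_{\partial\Omega}\beta(z)\bigl(|u|^{p-2}u-u_0^{p-1}\bigr)(u_0-u)^+\,d\sigma=0,
\]
and monotonicity of $A$ together with the strict monotonicity of $x\mapsto|x|^{p-2}x$ forces $(u_0-u)^+=0$; hence $u\ge u_0$ and $u$ actually solves $(P_\lambda)$. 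Next, I verify that $u_0$ is a local $C^1(\overline\Omega)$-minimizer of $\hat d^+_\lambda$: since $u_0\in\mathrm{int}_{C^1(\overline\Omega)}[0,\tilde u_\lambda]$ by Proposition~\ref{prop9}, any $u$ in a small $C^1$-neighborhood of $u_0$ lies in $[0,\tilde u_\lambda]$, and a pointwise comparison of the primitives, using $H_0$, gives $\hat d^+_\lambda(u)-\hat d^+_\lambda(u_0)\ge d^+_\lambda(u)-d^+_\lambda(u_0)\ge 0$, because $u_0$ was the global minimizer of $d^+_\lambda$ in the proof of Proposition~\ref{prop9}; Proposition~\ref{prop2} then upgrades this to a local $W^{1,p}(\Omega)$-minimizer. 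To obtain mountain pass geometry, I probe along the ray $t\hat u_1$ with $\hat u_1\in D_+$: for $t$ large one has $t\hat u_1\ge u_0$, so combining the strict sublinearity of $G$ given by $H(g)(ii)$ with the resonance $F(z,x)/|x|^p\to\hat\lambda_m/p$ from $H(f)(ii)$, one finds
\[
\frac{\hat d^+_\lambda(t\hat u_1)}{t^p}\longrightarrow \frac{1}{p}\bigl(\hat\lambda_1-\hat\lambda_m\bigr)<0,
\]
hence $\hat d^+_\lambda(t\hat u_1)\to-\infty$. Finally, I check the C-condition for $\hat d^+_\lambda$ by the standard resonance argument: after normalizing a C-sequence $\{u_n\}$, the limit $y$ of $u_n/\|u_n\|$ is excluded from being zero by the $\liminf$-clause of $H(f)(ii)$, while $y\ne 0$ would make $y$ an $\hat\lambda_m$-eigenfunction with a constant sign forced by the truncation at $u_0$, contradicting the sign-changing nature of nonprincipal eigenfunctions; boundedness of $\{u_n\}$ together with the $(S)_+$ property of $A$ (Proposition~\ref{prop4}) gives strong convergence.

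With these ingredients, Theorem~\ref{th1} produces a critical point $\hat u$ of $\hat d^+_\lambda$ with $\hat d^+_\lambda(\hat u)\ge m_\rho>\hat d^+_\lambda(u_0)$, hence $\hat u\ne u_0$ (if $u_0$ fails to be a strict local minimizer, infinitely many critical points accumulate at it and one still selects $\hat u\ne u_0$). By the first step, $\hat u\ge u_0$ is a positive solution of $(P_\lambda)$; nonlinear regularity (Lieberman \cite{17}) gives $\hat u\in C^1(\overline\Omega)$, and the Robin condition with $\beta>0$ yields $\partial\hat u/\partial n<0$ on $\partial\Omega$. Hypothesis $H_0$ ensures $\lambda g(z,u_0)+f(z,u_0)<\lambda g(z,\hat u)+f(z,\hat u)$ on $\{u_0<\hat u\}$ and the two sides are in $L^\infty(\Omega)$, so Proposition~\ref{prop3} yields $\hat u-u_0\in\mathrm{int}\,C_+$. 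The negative solution $\hat v\in -D_+$ with $v_0-\hat v\in\mathrm{int}\,C_+$ is produced by the symmetric truncation from above at $v_0$, using $-\hat u_1$ in place of $\hat u_1$. The main obstacle is Step~4 (verification of the C-condition), which is delicate precisely because the resonance is at a nonprincipal eigenvalue and requires the $\liminf$-clause of $H(f)(ii)$ to rule out the degenerate alternative; the local minimizer property in Step~2 is also subtle since the new functional differs from the one used in Proposition~\ref{prop9}, and the comparison requires the monotonicity hypothesis $H_0$.
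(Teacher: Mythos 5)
Your proposal is correct and follows the paper's overall strategy: truncate the reaction below at $u_0$ (your $\hat d^{+}_\lambda$ is exactly the paper's $\chi^{+}_\lambda$ with truncation $i^{+}_\lambda$), show every critical point lies in $[u_0)$ and hence solves (\ref{eqp}), make $u_0$ a local minimizer, apply the mountain pass theorem using the resonance at $\hat\lambda_m$ both for the C-condition and for $\chi^{+}_\lambda(t\hat u_1)\rightarrow-\infty$, and finish with $H_0$ plus the strong comparison of Proposition \ref{prop3}. The one step you do genuinely differently is the local-minimizer property of $u_0$: the paper truncates a second time at $\tilde u_\lambda$, globally minimizes the coercive functional $\hat\chi^{+}_\lambda$, and argues by dichotomy (either the minimizer differs from $u_0$, in which case it is already the second positive solution, or it equals $u_0$ and then (\ref{eq48})--(\ref{eq49}) make $u_0$ a local $C^1(\overline\Omega)$-minimizer), whereas you compare $\hat d^{+}_\lambda$ directly with the functional $d^{+}_\lambda$ of Proposition \ref{prop9} on a $C^1$-neighborhood of $u_0$ contained in $[0,\tilde u_\lambda]$: there one indeed has $\int_\Omega\int_{u}^{u_0}\left[\hat\eta^{+}_\lambda(z,s)-\eta^{+}_\lambda(z,s)\right]ds\,dz\geq0$ by $H_0$ (the integrands coincide for $s>u_0$ and are ordered for $0\leq s\leq u_0$), so $\hat d^{+}_\lambda(u)-\hat d^{+}_\lambda(u_0)\geq d^{+}_\lambda(u)-d^{+}_\lambda(u_0)\geq0$ by the global minimality of $u_0$, and Proposition \ref{prop2} upgrades this to a $W^{1,p}(\Omega)$-local minimizer. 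This is a legitimate and slightly more direct alternative; the paper's dichotomy buys the second solution outright in one branch without any mountain pass, while yours avoids introducing the auxiliary functional altogether.

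One inaccuracy worth flagging, though it does not break the argument: in your C-condition sketch, the exclusion of $y=0$ for the normalized sequence does not come from the $\liminf$-clause of $H(f)(ii)$. It comes for free from the $(S)_+$ property: testing the rescaled equation with $y_n-y$ gives $y_n\rightarrow y$ strongly in $W^{1,p}(\Omega)$, hence $\|y\|=1$; this is exactly the paper's Claim \ref{cl3.1}. The $\liminf$-clause is needed only for the untruncated energy functional $\varphi_\lambda$ in Proposition \ref{prop13}, where no sign information is available. With that correction, your C-condition argument coincides with the paper's: the truncation forces the norm-one limit to be nonnegative, and a nonnegative eigenfunction associated with $\hat\lambda_m$, $m\geq2$, is impossible since nonprincipal eigenfunctions are nodal.
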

\begin{proof}
	We introduce the following truncation of the reaction in problem \eqref{eqp}:
	\begin{equation}\label{eq31}
		i^+_\lambda(z,x)=\left\{\begin{array}{ll}
			\lambda g(z,u_0(z))+f(z,u_0(z))&\mbox{if}\ x\leq u_0(z)\\
			\lambda g(z,x)+f(z,x)&\mbox{if}\ u_0(z)<x.
		\end{array}\right.
	\end{equation}
	
	This is a Carath\'eodory function. We set $I^+_\lambda(z,x)=\int^x_0 i^+_\lambda(z,s)ds$ and consider the $C^1$-functional $\chi^+_\lambda:W^{1,p}(\Omega)\rightarrow\RR$ defined by
	$$\chi^+_\lambda(u)=\frac{1}{p}\tau(u)-\int_{\Omega}I^+_\lambda(z,u)dz\ \mbox{for all}\ u\in W^{1,p}(\Omega).$$
	\begin{claim}\label{cl3.1}
		$\chi^+_\lambda(\cdot)$ satisfies the $C$-condition.
	\end{claim}
	
	We consider a sequence $\{u_n\}_{n\geq 1}\subseteq W^{1,p}(\Omega)$ such that
	\begin{eqnarray}
		&&|\chi^+_\lambda(u_n)|\leq M_1\ \mbox{for some}\ M_1>0\ \mbox{and all}\ n\in\NN,\label{eq32}\\
		&&(1+||u_n||)(\chi^+_\lambda)'(u_n)\rightarrow 0\ \mbox{in}\ W^{1,p}(\Omega)^*\ \mbox{as}\ n\rightarrow\infty\label{eq33}.
	\end{eqnarray}
	
	From (\ref{eq33}) we have
	\begin{eqnarray}\label{eq34}
		&&|\left\langle A(u_n),h\right\rangle+\int_{\partial\Omega}\beta(z)|u_n|^{p-2}u_nhd\sigma-\int_\Omega i^+_\lambda(z,u_n)hdz|\leq\frac{\epsilon_n||h||}{1+||u_n||}\\
		&&\mbox{for all}\ h\in W^{1,p}(\Omega),\ \mbox{with}\ \epsilon_n\rightarrow 0^+.\nonumber
	\end{eqnarray}
	
	In (\ref{eq34}) we choose $h=-u^-_n\in W^{1,p}(\Omega)$. Then
	\begin{eqnarray}\label{eq35}
		&&\tau(u^-_n)\leq c_{14}||u^-_n||\ \mbox{for some}\ c_{14}>0\ \mbox{and all}\ n\in\NN\nonumber\\
		&&(\mbox{see (\ref{eq31}) and hypotheses}\ H(g)(i),H(f)(i)),\nonumber\\
		&\Rightarrow&\{u^-_n\}_{n\geq 1}\subseteq W^{1,p}(\Omega)\ \mbox{is bounded}\ (\mbox{see (\ref{eq2})}).
	\end{eqnarray}
	
	Using (\ref{eq35}) in (\ref{eq34}), we obtain
	\begin{eqnarray}\label{eq36}
		&&\left|\left\langle A(u^+_n),h\right\rangle+\int_{\partial\Omega}\beta(z)(u^+_n)^{p-1}hd\sigma-\int_\Omega i^+_\lambda(z,u^+_n)dz\right|\leq c_{15}||h||\\
		&&\mbox{for some}\ c_{15}>0\ \mbox{and all}\ h\in W^{1,p}(\Omega)\ n\in\NN.\nonumber
	\end{eqnarray}
	
	We will show that $\{u^+_n\}_{n\geq 1}\subseteq W^{1,p}(\Omega)$ is bounded, too. Arguing by contradiction, suppose that $||u^+_n||\rightarrow\infty$ as $n\rightarrow\infty$. Let $y_n=\frac{u^+_n}{||u^+_n||}$, $ n\in\NN$. Then $||y_n||=1,y_n\geq 0$ for all $n\in\NN$ and so we may assume that
	\begin{equation}\label{eq37}
		y_n\stackrel{w}{\rightarrow}y\ \mbox{in}\ W^{1,p}(\Omega)\ \mbox{and}\ y_n\rightarrow y\ \mbox{in}\ L^p(\Omega)\ \mbox{and}\ L^p(\partial\Omega),\ y\geq 0.
	\end{equation}
	
	From (\ref{eq36}) we have
	\begin{eqnarray}\label{eq38}
		&&\left|\left\langle A(y_n),h\right\rangle+\int_{\partial\Omega}\beta(z)y^{p-1}_nhd\sigma-\int_\Omega\frac{i^+_\lambda(z,u^+_n)}{||u^+_n||^{p-1}}hdz\right|\leq\frac{c_{15}||h||}{||u^+_n||^{p-1}}\\
		&&\mbox{for all}\ n\in\NN,\ h\in W^{1,p}(\Omega).\nonumber
	\end{eqnarray}
	
	From (\ref{eq5}) and (\ref{eq31}), we see that
	\begin{equation}\label{eq39}
		\left\{\frac{i^+_\lambda(\cdot,u^+_n(\cdot))}{||u^+_n||^{p-1}}\right\}_{n\geq 1}\subseteq L^{p'}(\Omega)\ \mbox{is bounded}\ \left(\frac{1}{p}+\frac{1}{p'}=1\right).
	\end{equation}
	
	Passing to a subsequence if necessary, and using hypotheses $H(g)(ii),H(f)(ii)$, we have
	\begin{equation}\label{eq40}
		\frac{i^+_\lambda(\cdot,u^+_n(\cdot))}{||u^+_n||^{p-1}}\stackrel{w}{\rightarrow}\hat{\lambda}_my^{p-1}\ \mbox{in}\ L^{p'}(\Omega)
	\end{equation}
	(see Aizicovici, Papageorgiou \& Staicu \cite{1}, proof of Proposition \ref{prop16}).
	
	In (\ref{eq38}) we choose $h=y_n-y\in W^{1,p}(\Omega)$, pass to the limit as $n\rightarrow\infty$, and use (\ref{eq37}) and (\ref{eq39}). Then
	\begin{eqnarray}\label{eq41}
		&&\lim\limits_{n\rightarrow\infty}\left\langle A(y_n),y_n-y\right\rangle=0,\nonumber\\
		&\Rightarrow&y_n\rightarrow y\ \mbox{in}\ W^{1,p}(\Omega)\ (\mbox{see Proposition \ref{prop4}})\ \mbox{and so}\ ||y||=1,\ y\geq 0.
	\end{eqnarray}
	
	So, if in (\ref{eq38}) we pass to the limit as $n\rightarrow\infty$, and use (\ref{eq40}) and (\ref{eq41}) to obtain
	\begin{eqnarray}\label{eq42}
		&&\left\langle A(y),h\right\rangle+\int_{\partial\Omega}\beta(z)y^{p-1}hd\sigma=\hat{\lambda}_m\int_\Omega y^{p-1}hdz\ \mbox{for all}\ h\in W^{1,p}(\Omega),\nonumber\\
		&\Rightarrow&-\Delta_py(z)=\hat{\lambda}_my(z)^{p-1}\ \mbox{for a.a.}\ z\in\Omega,\ \frac{\partial y}{\partial n_p}+\beta(z)y^{p-1}=0\ \mbox{on}\ \partial\Omega
	\end{eqnarray}
	(see Papageorgiou \& R\u adulescu \cite{22}).
	
	Since $m\geq 2$, it follows by (\ref{eq42}) that $y(\cdot)$ must be nodal, a contradiction to (\ref{eq41}). Therefore
	\begin{eqnarray*}
		&&\{u^+_n\}_{n\geq 1}\subseteq W^{1,p}(\Omega)\ \mbox{is bounded},\\
		&\Rightarrow&\{u_n\}_{n\geq 1}\subseteq W^{1,p}(\Omega)\ \mbox{is bounded (see (\ref{eq35}))}.
	\end{eqnarray*}
	
	So, we may assume that
	\begin{equation}\label{eq43}
		u_n\stackrel{w}{\rightarrow}u\ \mbox{in}\ W^{1,p}(\Omega)\ \mbox{and}\ u_n\rightarrow u\ \mbox{in}\ L^p(\Omega)\ \mbox{and in}\ L^p(\partial\Omega).
	\end{equation}
	
	In (\ref{eq34}) we choose $h=u_n-u\in W^{1,p}(\Omega)$, pass to the limit as $n\rightarrow\infty$, and use (\ref{eq43}) and the fact that $\{i^+_\lambda(\cdot, u_n(\cdot))\}_{n\geq 1}\subseteq L^{p'}(\Omega)$ is bounded (see (\ref{eq5}) and (\ref{eq31})). We obtain
	\begin{eqnarray*}
		&&\lim\limits_{n\rightarrow\infty}\left\langle A(u_n),u_n-u\right\rangle=0,\\
		&\Rightarrow&u_n\rightarrow u\ \mbox{in}\ W^{1,p}(\Omega)\ (\mbox{see Proposition \ref{prop4}}).
	\end{eqnarray*}
	
	So $\chi^+_\lambda(\cdot)$ satisfies the $C$-condition. This proves Claim \ref{cl3.1}.
	\begin{claim}\label{cl3.2}
		We may assume that $u_0\in D_+$ is a local minimizer of $\chi^+_\lambda(\cdot)$.
	\end{claim}
	
	For sufficiently large $\lambda>0$, as in Proposition \ref{prop9}, let $\tilde{u}_\lambda\in D_+$ be a solution of \eqref{eq7l} (see Proposition \ref{prop7}). From Proposition \ref{prop9} we know that
	\begin{equation}\label{eq44}
		\tilde{u}_\lambda-u_0\in {\rm int}\, C_+.
	\end{equation}
	
	We introduce the following truncation of $i^+_\lambda(z,\cdot)$:
	\begin{equation}\label{eq45}
		j^+_\lambda(z,x)=\left\{\begin{array}{ll}
			i^+_\lambda(z,x)&\mbox{if}\ x\leq\tilde{u}_\lambda(z)\\
			i^+_\lambda(z,\tilde{u}_\lambda(z))&\mbox{if}\ \tilde{u}_\lambda(z)<x.
		\end{array}\right.
	\end{equation}
	
	This is a Carath\'eodory function. We set $J^+_\lambda(z,x)=\int^x_0 j^+_\lambda(z,s)ds$ and consider the $C^1$-functional $\hat{\chi}^+_\lambda:W^{1,p}(\Omega)\rightarrow\RR$ defined by
	$$\hat{\chi}^+_\lambda(u)=\frac{1}{p}\tau(u)-\int_{\Omega}J^+_\lambda(z,u)dz\ \mbox{for all}\ u\in W^{1,p}(\Omega).$$
	
	By (\ref{eq1}) and (\ref{eq45}), it is clear that $\hat{\chi}^+_\lambda(\cdot)$ is coercive. Also, it is sequentially weakly lower semicontinuous. Hence we can find $\hat{u}_0\in W^{1,p}(\Omega)$ such that
	\begin{equation}\label{eq46}
		\hat{\chi}^+_\lambda(\hat{u}_0)=\inf\{\hat{\chi}^+_\lambda(u):u\in W^{1,p}(\Omega)\}.
	\end{equation}
	
	Using (\ref{eq45}), the nonlinear regularity theory and the nonlinear maximum principle, we can easily show that
	\begin{equation}\label{eq47}
		K_{\hat{\chi}^+_\lambda}\subseteq[u_0,\tilde{u}_\lambda]\cap D_+.
	\end{equation}
	
	Evidently, $\tilde{u}_\lambda\notin K_{\hat{\chi}^+_\lambda}$ (see (\ref{eq6})). So, from (\ref{eq46}) and (\ref{eq47}), we have
	$$\hat{u}_0\in[u_0,\tilde{u}_\lambda]\cap D_+,\ \hat{u}_0\neq\tilde{u}_\lambda.$$
	
	If $\hat{u}_0\neq u_0$, then this is the desired second positive solution of \eqref{eqp} for sufficiently large $\lambda>0$, and using Proposition \ref{eq3}, we have
	$$\hat{u}_0-u_0\in {\rm int}\, C_+.$$
	
	Therefore we are done.
	
	So, we may assume that $\hat{u}_0=u_0\in D_+$. Note that
	\begin{equation}\label{eq48}
		\left.\chi^+_\lambda\right|_{[0,\tilde{u}_\lambda]}=\left.\hat{\chi}^+_\lambda\right|_{[0,\tilde{u}_\lambda]}\ (\mbox{see (\ref{eq45})}).
	\end{equation}
	
	From Proposition \ref{prop9}, we have
	\begin{equation}\label{eq49}
		u_0\in {\rm int}_{C^1(\overline{\Omega})}[0,\tilde{u}_\lambda].
	\end{equation}
	
	Then it follows from (\ref{eq46}), (\ref{eq47}), (\ref{eq48})  that
	\begin{eqnarray*}
		&&u_0\ \mbox{is a local}\ C^1(\overline{\Omega})-\mbox{minimizer of}\ \chi^+_\lambda(\cdot),\\
		&\Rightarrow&u_0\ \mbox{is a local}\ W^{1,p}(\Omega)-\mbox{minimizer of}\ \chi^+_\lambda(\cdot)\ (\mbox{see Proposition \ref{prop2}}).
	\end{eqnarray*}
	
	This proves Claim \ref{cl3.2}.
	
	Using (\ref{eq31}), we can show that
	$$K_{\chi^+_\lambda}\subseteq\left[u_0\right)\cap D_+.$$
	
	So, we may assume that $K_{\chi^+_\lambda}$ is finite, or otherwise we already have an infinity of positive solutions of \eqref{eqp} (for large enough $\lambda>0$) strictly bigger than $u_0$ and so we are done. Then on account of Claim \ref{cl3.2}, we can find sufficiently small $\rho\in(0,1)$ such that
	\begin{equation}\label{eq50}
		\chi^+_\lambda(u_0)<\inf\{\chi^+_\lambda(u):||u-u_0||=\rho\}=\tilde{m}^+_\lambda\ (\mbox{see \cite{1}}).
	\end{equation}
	
	From hypotheses $H(g)(ii),H(f)(ii)$ and since $m\geq 2$, we have
	\begin{equation}\label{eq51}
		\chi^+_\lambda(t\hat{u}_1)\rightarrow-\infty\ \mbox{as}\ t\rightarrow+\infty.
	\end{equation}
	
	Then (\ref{eq50}), (\ref{eq51}) and Claim \ref{cl3.1} permit the use of Theorem \ref{th1} (the mountain pass theorem). So, we can find $\hat{u}\in W^{1,p}(\Omega)$ such that
	\begin{equation}\label{eq52}
		\hat{u}\in K_{\chi^+_\lambda}\subseteq\left[u_0\right)\cap D_+\ \mbox{and}\ \tilde{m}^+_\lambda\leq\chi^+_\lambda(\hat{u}).
	\end{equation}
	
It follows from (\ref{eq50}), (\ref{eq52}) and (\ref{eq31}) that
	\begin{eqnarray*}
		&&\hat{u}\in D_+\ \mbox{is a second positive solution of \eqref{eqp} for sufficiently large}\ \lambda>0,\\
		&&u_0\leq\hat{u},\ u_0\neq\hat{u}.
	\end{eqnarray*}
	
	We have
	\begin{eqnarray}\label{eq53}
		-\Delta_pu_0(z)&=&\lambda g(z,u_0(z))+f(z,u_0(z))\nonumber\\
		&\leq&\lambda g(z,\hat{u}(z))+f(z,\hat{u}(z))\ (\mbox{see (\ref{eq52}) and hypothesis}\ H_0)\nonumber\\
		&=&-\Delta_p\hat{u}(z)\ \mbox{for almost all}\ z\in\Omega.
	\end{eqnarray}
	
	Note that $\lambda g(\cdot,u_0(\cdot))+f(\cdot,u_0(\cdot))\neq\lambda g(\cdot, \hat{u}(\cdot))+f(\cdot,\hat{u}(\cdot))$ (see hypothesis $H_0$). So, from (\ref{eq53}) and Proposition \ref{prop3}, we can infer that
	$$\hat{u}-u_0\in {\rm int}\, C_+.$$
	
	Similarly, for the second negative solution, we use $v_0\in-D_+$ from Proposition \ref{prop9}. So, we define
	$$i^-_\lambda(z,x)=\left\{\begin{array}{ll}
		\lambda g(z,x)+f(z,x)&\mbox{if}\ x\leq v_0(z)\\
		\lambda g(z,v_0(z))+f(z,v_0(z))&\mbox{if}\ v_0(z)<x.
	\end{array}\right.$$
	
	This is a Carath\'eodory function. We set $I^-_\lambda(z,x)=\int^x_0 i^-_\lambda(z,s)ds$ and consider the $C^1$-functional $\chi^-_\lambda: W^{1,p}(\Omega)\rightarrow\RR$ defined by
	$$\chi^-_\lambda(u)=\frac{1}{p}\tau(u)-\int_\Omega I^-_\lambda(z,u)dz\ \mbox{for all}\ u\in W^{1,p}(\Omega).$$
	Working as above, this time with $\chi^-_\lambda$ and truncating at $\tilde{v}_\lambda\in-D_+$ to produce $\hat{\chi}^-_\lambda(\cdot)$, we generate a second negative solution $\hat{v}$ of problem \eqref{eqp} for sufficiently large $\lambda>0$, such that
	$$\hat{v}\in-D_+\ \mbox{and}\ v_0-\hat{v}\in {\rm int}\, C_+.$$
This completes the proof.
\end{proof}

\section{The fifth solution}

So far we have four nontrivial smooth solutions, all with sign information (two positive and two negative). In this section, using the theory of critical groups, we establish the existence of a fifth nontrivial smooth solution distinct from the other four.
\begin{prop}\label{prop11}
	If hypotheses $H(\beta),H(g),H(f),H_0$ hold and $\lambda>0$ is sufficiently large, then $C_k(\chi^{\pm}_\lambda,\infty)$ $=0$ for all $k\in\NN_0$.
\end{prop}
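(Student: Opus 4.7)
The strategy is to show that the sublevel set $(\chi_\lambda^+)^c$ is contractible for $c$ sufficiently negative. Since $W^{1,p}(\Omega)$ is contractible, the long exact sequence of the pair $(W^{1,p}(\Omega),(\chi_\lambda^+)^c)$ then forces $H_k(W^{1,p}(\Omega),(\chi_\lambda^+)^c)=0$ for every $k\in\NN_0$, which by definition is $C_k(\chi_\lambda^+,\infty)$. The argument for $\chi_\lambda^-$ is entirely symmetric upon replacing $\hat u_1$ by $-\hat u_1$, so I concentrate on $\chi_\lambda^+$.

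The first step would be to refine the one-parameter computation of (\ref{eq51}) -- namely $\chi_\lambda^+(t\hat u_1)\to-\infty$ -- into a uniform statement: for every bounded $B\subseteq W^{1,p}(\Omega)$,
\begin{equation*}
\lim_{t\to+\infty}\sup_{w\in B}\chi_\lambda^+(w+t\hat u_1)=-\infty.
\end{equation*}
This follows from hypothesis $H(f)(ii)$ (yielding $F(z,x)=(\hat\lambda_m/p)|x|^p+o(|x|^p)$), hypothesis $H(g)(ii)$ (yielding $G(z,x)=o(|x|^p)$), the identity $\tau(\hat u_1)=\hat\lambda_1$, and the strict inequality $\hat\lambda_1<\hat\lambda_m$ forced by $m\geq 2$. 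The leading behavior of $\chi_\lambda^+(w+t\hat u_1)$ is then $(t^p/p)(\hat\lambda_1-\hat\lambda_m)$, which tends to $-\infty$, while the $w$-dependent remainders are controlled uniformly on $B$ by Nemytskii estimates based on the growth bound (\ref{eq5}).

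Next, I would fix $c<\inf\chi_\lambda^+(K_{\chi_\lambda^+})$ so negative that $\chi_\lambda^+(T_0\hat u_1)\leq c-1$ for some large $T_0>0$, and for each $R>0$ choose $T(R)\geq T_0$ with $\chi_\lambda^+(w+t\hat u_1)\leq c$ whenever $\|w\|\leq R$ and $t\geq T(R)$, selecting $T$ to be non-decreasing and continuous in $R$. I would then construct a homotopy
\begin{equation*}
h\colon[0,1]\times(\chi_\lambda^+)^c\longrightarrow(\chi_\lambda^+)^c,\qquad h(0,\cdot)=\mathrm{id},\quad h(1,\cdot)\equiv T_0\hat u_1,
\end{equation*}
in two stages. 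In the first stage I use the negative pseudo-gradient flow of $\chi_\lambda^+$, which is globally defined on $(\chi_\lambda^+)^c$ (by Claim \ref{cl3.1} and the absence of critical points below level $c$) and monotonically drives each $u$ to a point with $\chi_\lambda^+$ well below $c$. In the second stage I perform an affine retraction of the form $r\mapsto(1-r)u+((1-r)T(\|u\|)+rT_0)\hat u_1$, the uniform estimate of the previous paragraph ensuring that the affine segment stays inside $(\chi_\lambda^+)^c$.

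The principal technical obstacle is that the map $t\mapsto\chi_\lambda^+(w+t\hat u_1)$ is not in general monotone, so a naive straight-line homotopy in the $\hat u_1$-direction may exit the sublevel set at intermediate times. The two-stage device above circumvents this by first gaining a large safety margin via the pseudo-gradient flow (whose monotonicity is automatic) and only then performing the affine contraction in a region where the uniform estimate keeps $\chi_\lambda^+$ safely under $c$. The most delicate point is the continuity of the gluing parameters (flow-time and shift amplitude $T(\|u\|)$) as functions of the initial datum; verifying this, and the inequality $(1-r)T(\|u\|)+rT_0\geq T(\|(1-r)u\|)$ that is needed to retain the sublevel bound along the affine segment, is the technical heart of the argument.
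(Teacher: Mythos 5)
Your overall strategy (prove that a far sublevel set $(\chi^+_\lambda)^c$ is contractible and then read off $C_k(\chi^+_\lambda,\infty)=H_k(W^{1,p}(\Omega),(\chi^+_\lambda)^c)=0$ from the long exact sequence) is legitimate in principle, and your uniform anti-coercivity estimate along translates of the $\hat u_1$-ray is correct, but the contraction you describe has a genuine hole exactly where the difficulty sits. Your second stage starts at $u+T(\|u\|)\hat u_1$, not at $u$, so the segment joining a point $u\in(\chi^+_\lambda)^c$ to $u+T(\|u\|)\hat u_1$ must be covered by some stage, and the only tool you offer for it is a preliminary pseudo-gradient descent to a level ``well below $c$''. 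This cannot be made to work uniformly: by the growth bound (\ref{eq5}) the possible rise of $t\mapsto\chi^+_\lambda(u+t\hat u_1)$ on $[0,T(\|u\|)]$ is only controlled by a quantity of order $(1+\|u\|)^p$, and $\|u\|$ is unbounded on every sublevel set, while the descent flow gives no control on the norm of the deformed point (it may grow), so the ``safety margin'' you need depends on data produced by the flow itself --- a circularity the sketch does not break. The auxiliary inequality $(1-r)T(\|u\|)+rT_0\geq T(\|(1-r)u\|)$, which you yourself call the technical heart, also genuinely fails for natural thresholds $T$ (take $T$ essentially constant on $[\|u\|/2,\|u\|]$ with $T_0$ much smaller); that particular point could be repaired by reordering the stages (first shrink $u$ at the fixed shift $T(\|u\|)\hat u_1$, then slide down the ray to $T_0\hat u_1$), but the missing initial segment cannot. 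Note finally that contractibility of $(\chi^+_\lambda)^c$ is strictly stronger than what is claimed, and that unboundedness below along a ray --- even uniformly over bounded translates --- is in general insufficient: resonant functionals with saddle geometry are unbounded below along rays yet have nontrivial critical groups at infinity, so some input at the resonant eigenvalue $\hat\lambda_m$ beyond $\hat\lambda_1<\hat\lambda_m$ is unavoidable.

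The paper avoids all of this geometry. It compares $\chi^+_\lambda$ with the model functional $\psi^+_\lambda(u)=\frac{1}{p}\tau(u)-\frac{\lambda}{p}\|u^+\|^p_p$, $\lambda\in(\hat\lambda_m,\hat\lambda_{m+1})\setminus\hat\sigma(p)$, through the affine homotopy $h^+_\lambda(t,u)=(1-t)\chi^+_\lambda(u)+t\psi^+_\lambda(u)$, and proves a uniform Cerami-type nondegeneracy at very negative levels along the homotopy (Claim \ref{cl4.1}); this is where the resonance information enters: a nonnegative normalized limit of a Cerami sequence would be an eigenfunction for a parameter $\lambda_t\in[\hat\lambda_m,\hat\lambda_{m+1})$ with $m\geq2$, hence nodal or zero, a contradiction. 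The homotopy invariance theorem for critical groups at infinity then gives $C_k(\chi^+_\lambda,\infty)=C_k(\psi^+_\lambda,\infty)$, and a second homotopy $\psi^+_\lambda(u)-t\int_\Omega u\,dz$ combined with Picone's identity (Proposition \ref{prop6}) yields $C_k(\psi^+_\lambda,0)=0$, while $K_{\psi^+_\lambda}=\{0\}$ forces $C_k(\psi^+_\lambda,\infty)=C_k(\psi^+_\lambda,0)$. If you insist on a direct deformation of sublevel sets, you would need something like an angle condition at infinity, e.g.\ $\frac{d}{dt}\chi^+_\lambda(u+t\hat u_1)<0$ whenever $\chi^+_\lambda(u+t\hat u_1)\leq c$, which the hypotheses do not obviously provide; the reduction to $\psi^\pm_\lambda$ is precisely the device that makes proving such a condition unnecessary.
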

\begin{proof}
	We present the proof for the functional $\chi^+_\lambda(\cdot)$, the proof for $\chi^-_\lambda(\cdot)$ being similar.
	
	Let $\lambda\in(\hat{\lambda}_m,\hat{\lambda}_{m+1})\backslash\hat{\sigma}(p)$ (recall that $\hat{\sigma}(p)$ denotes the set of eigenvalues of $-\Delta_p$ with Robin boundary condition) and consider the $C^1$-functional $\psi^+_\lambda:W^{1,p}(\Omega)\rightarrow\RR$ defined by
	$$\psi^+_\lambda(u)=\frac{1}{p}\tau(u)-\frac{\lambda}{p}||u^+||^p_p\ \mbox{for all}\ u\in W^{1,p}(\Omega).$$
	
	We consider the following homotopy
	$$h^+_\lambda(t,u)=(1-t)\chi^+_\lambda(u)+t\psi^+_\lambda(u)\ \mbox{for all}\ (t,u)\in[0,1]\times W^{1,p}(\Omega).$$
	\begin{claim}\label{cl4.1}
		We can find $\eta_0\in\RR$ and $\delta_0>0$ such that
		$$h^+_\lambda(t,u_0)\leq\eta_0\Rightarrow(1+||u||)||(h^+_\lambda)'_u(t,u)||_*\geq\delta_0\ \mbox{for all}\ t\in[0,1].$$
	\end{claim}
	
	To prove Claim \ref{cl4.1}, we argue indirectly. So, suppose Claim \ref{cl4.1} is not true. Evidently, $h^+_\lambda(\cdot,\cdot)$ maps bounded sets to bounded ones. Hence we can find $\{t_n\}_{n\geq 1}\subseteq[0,1]$ and $\{u_n\}_{n\geq 1}\subseteq W^{1,p}(\Omega)$ such that
	\begin{equation}\label{eq54}
		t_n\rightarrow t,\ ||u_n||\rightarrow\infty,\ h^+_\lambda(t_n,u_n)\rightarrow-\infty\ \mbox{and}\ (1+||u_n||)(h^+_\lambda)'_n(t_n,u_n)\rightarrow 0.
	\end{equation}
	
	From the last convergence in (\ref{eq54}), we have
	\begin{eqnarray}\label{eq55}
		\left|\left\langle A(u_n),h\right\rangle+
			\int_{\partial\Omega}\beta(z)|u_n|^{p-2}u_nhd\sigma-
		(1-t_n)\int_\Omega i^+_\lambda(z,u_n)hdz-\lambda t_n\int_\Omega(u^+_n)^{p-1}hdz\right|\nonumber\\
		\leq\frac{\epsilon_n||h||}{1+||u_n||}\ \mbox{for all}\ h\in W^{1,p}(\Omega),\ \mbox{with}\ \epsilon_n\rightarrow 0^+.
	\end{eqnarray}
	
	In (\ref{eq55}) we choose $h=-u^-_n\in W^{1,p}(\Omega)$. Then
	\begin{eqnarray}\label{eq56}
		&&\tau(u^-_n)\leq c_{16}||u^-_n||\ \mbox{for some}\ c_{16}>0\ \mbox{and all}\ n\in\NN\ (\mbox{see (\ref{eq31})}),\nonumber\\
		&\Rightarrow&\{u^-_n\}_{n\geq 1}\subseteq W^{1,p}(\Omega)\ \mbox{is bounded (see (\ref{eq1}))}.
	\end{eqnarray}
	
	From (\ref{eq54}) and (\ref{eq56}) it follows that
	$$||u^+_n||\rightarrow+\infty.$$
	
	We set $y_n=\frac{u^+_n}{||u^+_n||},\ n\in\NN$. Then $||y_n||=1,\ y_n\geq 0$ for all $n\in\NN$. So, we may assume that
	\begin{equation}\label{eq57}
		y_n\stackrel{w}{\rightarrow}y\ \mbox{in}\ W^{1,p}(\Omega)\ \mbox{and}\ y_n\rightarrow y\ \mbox{in}\ L^p(\Omega)\ \mbox{and in}\ L^p(\partial\Omega),\ y\geq 0.
	\end{equation}
	
	From (\ref{eq55}) and (\ref{eq56}) it follows that
	\begin{eqnarray}\label{eq58}
		\left|\left\langle A(y_n),h\right\rangle+\int_{\partial\Omega}\beta(z)y^{p-1}_nhd\sigma-(1-t_n)\int_\lambda\frac{i^+_\lambda(z,u^+_n)}{||u^+_n||^{p-1}}hdz
		- \lambda t_n\int_\lambda y^{p-1}_n hdz\right|\leq\epsilon'_n||h|| \nonumber     \\
		\mbox{for all}\ h\in W^{1,p}(\Omega),\ \mbox{with}\ \epsilon'_n\rightarrow 0^+
	\end{eqnarray}
	
	From (\ref{eq5}) and (\ref{eq31}), we see that
	\begin{equation}\label{eq59}
		\left\{\frac{i^+_\lambda(\cdot,u^+_n(\cdot))}{||u^+_n||^{p-1}}\right\}_{n\geq 1}\subseteq L^{p'}(\Omega)\ \mbox{is bounded}.
	\end{equation}
	
	Passing to a subsequence if necessary and using hypotheses $H(g)(ii)$ and $H(f)(ii)$ we have
	\begin{equation}\label{eq60}
		\frac{i^+_\lambda(\cdot,u^+_n(\cdot))}{||u^+_n||^{p-1}}\stackrel{w}{\rightarrow}\hat{\lambda}_my^{p-1}\ \mbox{in}\ L^{p'}(\Omega)\ (\mbox{see \cite{1}}).
	\end{equation}
	
	In (\ref{eq58}) we choose $h=y_n-y\in W^{1,p}(\Omega)$, pass to the limit as $n\rightarrow\infty$, and use (\ref{eq57}), (\ref{eq59}). Then
	\begin{eqnarray}\label{eq61}
		&&\lim\limits_{n\rightarrow\infty}\left\langle A(y_n),y_n-y\right\rangle=0,\nonumber\\
		&\Rightarrow&y_n\rightarrow y\ \mbox{in}\ W^{1,p}(\Omega)\ \mbox{(see Proposition \ref{prop4}), hence}\ ||y||=1,\ y\geq 0.
	\end{eqnarray}
	
	In (\ref{eq58}) we pass to the limit as $n\rightarrow\infty$, and use (\ref{eq60}), (\ref{eq61}) and the continuity of $A(\cdot)$ (Proposition \ref{prop4}). We obtain
	
	\begin{eqnarray}\label{eq62}
		\left\langle A(y),h\right\rangle+\int_{\partial\Omega}\beta(z)y^{p-1}hd\sigma  
		=[(1-t)\hat{\lambda}_m+t\lambda]\int_{\Omega}y^{p-1}hdz\ \mbox{for all}\ h\in W^{1,p}(\Omega)  \Rightarrow
		 \nonumber   \\
		-\Delta_py(z)=\lambda_t y(z)^{p-1}\ \mbox{for almost all}\ z\in\Omega,\ \frac{\partial y}{\partial n_p}+ 
		 \beta(z)y^{p-1}=0\ \mbox{on}\ \partial\Omega,
	\end{eqnarray}
	
	with $\lambda_t=(1-t)\hat{\lambda}_m+t\lambda$. We have
	\begin{equation}\label{eq63}
		\lambda_t\in[\hat{\lambda}_m,\hat{\lambda}_{m+1}).
	\end{equation}
	
	From (\ref{eq62}) and (\ref{eq63}) and since $m\geq 2$, we can infer that
	$$y=0\ \mbox{or}\ y\ \mbox{is nodal}.$$
	
	Both assertions contradict (\ref{eq61}). This proves Claim \ref{cl4.1}.
	
	Then Claim \ref{cl4.1} and Theorem 5.1.21 of Chang \cite[p. 334]{3} (see also Liang \& Su \cite[Proposition 3.2]{16}), imply that
	\begin{eqnarray}\label{eq64}
		&&C_k(h^+_\lambda(0,\cdot),\infty)=C_k(h^+_\lambda(1,\cdot),\infty)\ \mbox{for all}\ k\in\NN_0,\nonumber\\
		&\Rightarrow&C_k(\chi^+_\lambda,\infty)=C_k(\psi^+_\lambda,\infty)\ \mbox{for all}\ k\in\NN_0.
	\end{eqnarray}
	
	Now we consider the following homotopy
	$$\hat{h}^+_\lambda(t,u)=\psi^+_\lambda(u)-t\int_\Omega udz\ \mbox{for all}\ (t,u)\in[0,1]\times W^{1,p}(\Omega).$$
	\begin{claim}\label{cl4.2}
		$(\hat{h}^+_\lambda)'_u(t,u)\neq 0$ for all $t\in[0,1]$,  $u\in W^{1,p}(\Omega)\backslash\{0\}$.
	\end{claim}
	
	Again, we argue indirectly. So, suppose that for some $t\in[0,1]$ and $u\in W^{1,p}(\Omega)\backslash\{0\}$, we have
	\begin{eqnarray}\label{eq65}
		&&(\hat{h}^+_\lambda)'_u(t,u)=0,\nonumber\\
		&\Rightarrow&\left\langle A(u),h\right\rangle+\int_{\partial\Omega}\beta(z)|u|^{p-2}uhd\sigma=\lambda\int_\Omega(u^+)^{p-1}hdz+t\int_\Omega hdz\\
		&&\mbox{for all}\ h\in W^{1,p}(\Omega).\nonumber
	\end{eqnarray}
	
	In (\ref{eq65}) we choose $h=-u^-\in W^{1,p}(\Omega)$. Then
	\begin{eqnarray*}
		&&\tau(u^-)\leq 0,\\
		&\Rightarrow&u\geq 0,\ u\neq 0\ (\mbox{see (\ref{eq1})}).
	\end{eqnarray*}
	
	Hence (\ref{eq65}) becomes
	\begin{eqnarray}\label{eq66}
		\left\langle A(u),h\right\rangle+\int_{\partial\Omega}\beta(z)u^{p-1}hdz=\lambda\int_\Omega u^{p-1}hdz+t\int_\Omega hdz\ \mbox{for all}\ h\in W^{1,p}(\Omega)      \Rightarrow          \nonumber \\
		-\Delta_p u(z)=\lambda u(z)^{p-1}+t\ \mbox{for a.a.}\ z\in\Omega,\ \frac{\partial u}{\partial n_p}+\beta(z)u^{p-1}=0\ \mbox{on}\ \partial\Omega.
	\end{eqnarray}
	
	As before, the nonlinear regularity theory implies that $u\in C_+\backslash\{0\}$. Also, from (\ref{eq66}) we have
	\begin{eqnarray*}
		\Delta_pu(z)\leq 0\ \mbox{for almost all}\ z\in\Omega        \Rightarrow      \\
		u\in D_+\ (\mbox{see Gasinski \& Papageorgiou \cite[p. 738]{10}}).
	\end{eqnarray*}
	
	Let $v\in D_+$ and consider the function $R(v,u)(\cdot)$ from Section 2. Using Proposition \ref{prop6}, we get
	\begin{eqnarray*}
		0&\leq&\int_\Omega R(v,u)dz\\
		&=&||Dv||^p_p-\int_\Omega(-\Delta_pu)\frac{v^p}{u^{p-1}}dz+\int_{\partial\Omega}\beta(z)u^{p-1}\frac{v^p}{u^{p-1}}d\sigma\\
		&&(\mbox{via the nonlinear Green identity, see \cite[p. 211]{10}})\\
		&\leq&||Dv||^p_p-\lambda||v||^p_p+\int_{\partial\Omega}\beta(z)v^pd\sigma\ (\mbox{see (\ref{eq66})})\\
		&=&\tau(v)-\lambda||v||^p_p.
	\end{eqnarray*}
	
	Let $v=\hat{u}_1\in D_+$. Then
	$$0\leq[\hat{\lambda}_1-\lambda]<0\ (\mbox{since}\ \lambda>\hat{\lambda}_m,m\geq 2\ \mbox{and}\ ||\hat{u}_1||_p=1),$$
	a contradiction. This proves Claim \ref{cl4.2}.
	
	The homotopy invariance property of critical groups (see Gasinski \& Papageorgiou \cite[Theorem 5.125, p. 836]{11}) implies that for sufficiently small $r>0$ we have
	\begin{eqnarray}\label{eq67}
		&&H_k((\hat{h}^+_\lambda)(0,\cdot)^{\circ}\cap B_r,(\hat{h}^+_\lambda)(0,\cdot)^{\circ}\cap B_r\backslash\{0\})\nonumber\\
		&=&H_k((\hat{h}^+_\lambda)(1,\cdot)^{\circ}\cap B_r,(\hat{h}^+_\lambda)(1,\cdot)^{\circ}\cap B_r\backslash\{0\})\ \mbox{for all}\ k\in\NN_0.
	\end{eqnarray}
	
	On account of Claim \ref{cl4.2}, $0$ is an ordinary level for $\hat{h}^+_\lambda(1,\cdot)$. Hence from Granas \& Dugundji \cite[p. 407]{12}, we have
	\begin{equation}\label{eq68}
		H_k((\hat{h}^+_\lambda)(1,\cdot)^{\circ}\cap B_r,(\hat{h}^+_\lambda)(1,\cdot)^{\circ}\cap B_r\backslash\{0\})=0\ \mbox{for all}\ k\in\NN_0.
	\end{equation}
	
	From the definition of critical groups, we have
	\begin{eqnarray}\label{eq69}
		&&H_k((\hat{h}^+_\lambda)(0,\cdot)^{\circ}\cap B_r,(\hat{h}^+_\lambda)(0,\cdot)^{\circ}\cap B_r\backslash\{0\})=C_k(\psi^+_\lambda,0)\\
		&&\mbox{for all}\ k\in\NN_0.\nonumber
	\end{eqnarray}
	
	Combining (\ref{eq67}), (\ref{eq68}), (\ref{eq69}), we obtain
	\begin{equation}\label{eq70}
		C_k(\psi^+_\lambda,0)=0\ \mbox{for all}\ k\in\NN_0.
	\end{equation}
	
	Since $\lambda\in(\hat{\lambda}_m,\hat{\lambda}_{m+1})\backslash\hat{\sigma}(p)$, we have
	\begin{eqnarray}\label{eq71}
		&&K_{\psi^+_\lambda}=\{0\}\nonumber,\\
		&\Rightarrow&C_k(\psi^+_\lambda,0)=C_k(\psi^+_\lambda,\infty)\ \mbox{for all}\ k\in\NN_0\\
		&&\mbox{(see \cite[Proposition 6.61, p. 160]{19})}.\nonumber
	\end{eqnarray}
	
	By (\ref{eq64}), (\ref{eq70}), (\ref{eq71}), we can conclude that
	$$C_k(\chi^+_\lambda,\infty)=0\ \mbox{for all}\ k\in\NN_0.$$

	Similarly, we can show that
	$$C_k(\chi^-_\lambda,\infty)=0\ \mbox{for all}\ k\in\NN_0.$$
The proof is now complete.
\end{proof}

Let $\hat{u}\in D_+$ and $\hat{v}\in-D_+$ be the second pair of constant sign solutions for problem \eqref{eqp} ($\lambda>0$ sufficiently large) produced in Proposition \ref{prop10}.
\begin{prop}\label{prop12}
	If hypotheses $H(\beta),H(g),H(f),H_0$ hold and $\lambda>0$ is large enough (see Proposition \ref{prop10}), then $C_k(\chi^+_\lambda,\hat{u})=C_k(\chi^-_\lambda,\hat{v})=\delta_{k,1}\ZZ$ for all $k\in\NN_0$.
\end{prop}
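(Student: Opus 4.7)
The plan is to identify $\hat{u}$ and $\hat{v}$ as critical points of mountain pass type for $\chi^+_\lambda$ and $\chi^-_\lambda$ respectively, and to read off their critical groups from a Morse-theoretic computation at such a point. I will describe the argument for $\chi^+_\lambda$ at $\hat{u}$; the one for $\chi^-_\lambda$ at $\hat{v}$ is strictly analogous, with the negative local minimizer $v_0$ in place of $u_0$ and $-t\hat{u}_1$ supplying the mountain pass geometry.

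First I would collect the structural facts about $\chi^+_\lambda$ that were already established in the proof of Proposition~\ref{prop10}. Namely, $\chi^+_\lambda\in C^1(W^{1,p}(\Omega),\RR)$ satisfies the $C$-condition (Claim \ref{cl3.1}); $u_0\in D_+$ is a strict local $W^{1,p}(\Omega)$-minimizer of $\chi^+_\lambda$ (Claim \ref{cl3.2}); there exist $\rho\in(0,1)$ and $\tilde{m}^+_\lambda>\chi^+_\lambda(u_0)$ with $\chi^+_\lambda(u)\geq\tilde{m}^+_\lambda$ on $\{\|u-u_0\|=\rho\}$ (see (\ref{eq50})); and $\chi^+_\lambda(t\hat{u}_1)\to-\infty$ as $t\to+\infty$ (see (\ref{eq51})). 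In particular, $\hat{u}$, produced by Theorem~\ref{th1}, realizes the mountain pass minimax value $c\geq \tilde{m}^+_\lambda>\chi^+_\lambda(u_0)$. I would also reduce to the case where $K_{\chi^+_\lambda}$ is finite: otherwise, from the inclusion $K_{\chi^+_\lambda}\subseteq[u_0)\cap D_+$ used in Proposition~\ref{prop10}, we would immediately obtain infinitely many distinct positive solutions of \eqref{eqp} lying above $u_0$, a conclusion that subsumes the theorem. Under this finiteness, $\hat{u}$ is an isolated critical point at level $c$ and its critical groups are well defined.

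Second, I would compute the critical groups at $\hat{u}$. Invoking the standard Morse-theoretic calculation at an isolated critical point of mountain pass type for a $C^1$-functional satisfying the $C$-condition with finite critical set (see Motreanu, Motreanu \& Papageorgiou \cite{19} and the monograph \cite{prr2019}), we arrive at
$$
C_k(\chi^+_\lambda,\hat{u})=\delta_{k,1}\ZZ \quad\mbox{for all}\ k\in\NN_0.
$$
Running the same scheme with $\chi^-_\lambda$ and $\hat{v}$ in place of $\chi^+_\lambda$ and $\hat{u}$ yields the companion identity $C_k(\chi^-_\lambda,\hat{v})=\delta_{k,1}\ZZ$ for all $k\in\NN_0$.

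The main obstacle is precisely the upgrade from the generic consequence $C_1(\chi^+_\lambda,\hat{u})\neq 0$ (which any mountain pass theorem for $C^1$-functionals delivers) to the sharper identification $C_k=\delta_{k,1}\ZZ$ for every $k$. The ingredient that enforces this sharpening in our Banach-space quasilinear setting is the strict monotonicity hypothesis $H_0$ which, together with the strong comparison Proposition~\ref{prop3}, gives $\hat{u}-u_0\in{\rm int}\,C_+$ and thereby separates $\hat{u}$ topologically from the local minimizer $u_0$; this is exactly the non-degeneracy that is needed in order to invoke the sharper critical-groups computation at a mountain pass point. The remainder of the proof is just a matter of checking that the background hypotheses ($C$-condition, isolation of $\hat{u}$, mountain pass geometry) are in place, and all of this has already been done in Proposition~\ref{prop10}.
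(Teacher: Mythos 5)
There is a genuine gap at the heart of your argument: the ``standard Morse-theoretic calculation at an isolated critical point of mountain pass type'' that you invoke does not exist at the level of generality you need. For a merely $C^1$ functional on a Banach space such as $W^{1,p}(\Omega)$, the mountain pass theorem combined with the critical group theory yields only the weak conclusion $C_1(\chi^+_\lambda,\hat{u})\neq 0$ (this is \cite[Corollary 6.81]{19}); the sharp identity $C_k(\chi^+_\lambda,\hat{u})=\delta_{k,1}\ZZ$ for all $k\in\NN_0$ is a much stronger statement, which in the classical theory requires $C^2$-smoothness, a Hilbert space setting and Fredholm-type nondegeneracy conditions on the second derivative, none of which are available for the $p$-Laplacian energy functional. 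Your attempt to supply the missing nondegeneracy through hypothesis $H_0$ and the strong comparison result $\hat{u}-u_0\in{\rm int}\,C_+$ does not work: the ordering separation of $\hat{u}$ from $u_0$ in $C^1(\overline\Omega)$ is an entirely different kind of information and has no bearing on the homology of the level sets of $\chi^+_\lambda$ near $\hat{u}$, so it cannot force the vanishing of $C_k(\chi^+_\lambda,\hat{u})$ for $k\neq 1$ or pin down $C_1$ to be exactly $\ZZ$.

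The paper closes this gap by a global homological argument that your proposal never touches, and this is precisely why Proposition \ref{prop11} was proved beforehand. One first reduces to $K_{\chi^+_\lambda}=\{u_0,\hat{u}\}$, chooses levels $\eta<\hat{m}^+_\lambda=\chi^+_\lambda(u_0)<\vartheta<\tilde{m}^+_\lambda$, and writes the long exact sequence \eqref{eq73} for the triple $(W^{1,p}(\Omega),(\chi^+_\lambda)^\vartheta,(\chi^+_\lambda)^\eta)$. Since $H_k(W^{1,p}(\Omega),(\chi^+_\lambda)^\eta)=C_k(\chi^+_\lambda,\infty)=0$ by Proposition \ref{prop11}, and since $H_k(W^{1,p}(\Omega),(\chi^+_\lambda)^\vartheta)=C_k(\chi^+_\lambda,\hat{u})$ while $H_{k-1}((\chi^+_\lambda)^\vartheta,(\chi^+_\lambda)^\eta)=C_{k-1}(\chi^+_\lambda,u_0)=\delta_{k-1,0}\ZZ$ (because $u_0$ is a local minimizer), exactness embeds $C_k(\chi^+_\lambda,\hat{u})$ into $\delta_{k-1,0}\ZZ$; hence $C_k(\chi^+_\lambda,\hat{u})=0$ for $k\neq 1$ and ${\rm rank}\,C_1(\chi^+_\lambda,\hat{u})\leq 1$. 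Only at this last step does the mountain pass information enter, via $C_1(\chi^+_\lambda,\hat{u})\neq 0$, giving $C_1(\chi^+_\lambda,\hat{u})=\ZZ$. Without this exact-sequence step (or some substitute using $C_k(\chi^+_\lambda,\infty)=0$ and the Morse relation \eqref{eq4}), your argument establishes only $C_1(\chi^+_\lambda,\hat{u})\neq 0$, which is not the assertion of the proposition and is insufficient for the use made of it in Proposition \ref{prop15} and in the proof of the fifth solution.
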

\begin{proof}
	We may assume that $K_{\chi^+_\lambda}=\{u_0,\hat{u}\}$. Otherwise we already have a fifth nontrivial solution for \eqref{eqp}, which is also positive (see (\ref{eq52}) and (\ref{eq31})).
	
	Let $\hat{m}^+_\lambda=\chi^+_\lambda(u_0)$ and let $\tilde{m}^+_\lambda$ be as in (\ref{eq50}). We have $\hat{m}^+_\lambda<\tilde{m}^+_\lambda$ and we choose $\eta,\vartheta\in\RR$ such that
	\begin{equation}\label{eq72}
		\eta<\hat{m}^+_\lambda<\vartheta<\tilde{m}^+_\lambda.
	\end{equation}
	
	For these levels, we consider the corresponding sublevel sets for $\chi^+_\lambda$
	$$(\chi^+_\lambda)^{\eta}\subseteq(\chi^+_\lambda)^{\vartheta}\subseteq W^{1,p}(\Omega).$$
	
	For this triple we consider the corresponding long exact sequence of singular homological groups (see Motreanu, Motreanu \& Papageorgiou \cite[Proposition 6.14, p. 143]{19}). We have
	\begin{equation}\label{eq73}
		\cdots\rightarrow H_k(W^{1,p}(\Omega),(\chi^+_\lambda)^{\eta})\stackrel{i_*}{\rightarrow}H_k(W^{1,p}(\Omega),(\chi^+_\lambda)^\vartheta)
\stackrel{\hat{\partial}_*}{\rightarrow}H_{k-1}((\chi^+_\lambda)^\eta,(\chi^+_\lambda)^\vartheta)\rightarrow\cdots
	\end{equation}
	with $i_*$ being the homomorphism induced by the inclusion map $i:(W^{1,p}(\Omega),(\chi^+_\lambda)^\eta)\rightarrow(W^{1,p}(\Omega),(\chi^+_\lambda)^\vartheta)$ and $\hat{\partial}_*$ is the composite boundary homomorphism.
	
	From (\ref{eq72}) we see that $\eta<\inf\chi^+_\lambda(K_{\chi^+_\lambda})$ and so
	\begin{equation}\label{eq74}
		H_k(W^{1,p}(\Omega),(\chi^+_\lambda)^\eta)=C_k(\chi^+_\lambda,\infty)=0\ \mbox{for all}\ k\in\NN_0\ (\mbox{see Proposition \ref{prop11}}).
	\end{equation}
	
	Also, from (\ref{eq72}) and (\ref{eq52}), we have
	\begin{eqnarray}
		&&H_k(W^{1,p}(\Omega),(\chi^+_\lambda)^\vartheta)=C_k(\chi^+_\lambda,\hat{u})\ \mbox{for all}\ k\in\NN_0,\label{eq75}\\
		&&H_{k-1}((\chi^+_\lambda)^\vartheta,(\chi^+_\lambda)^\eta)=C_{k-1}(\chi^+_\lambda,u_0)=
\delta_{k-1,0}\ZZ=\delta_{k,1}\ZZ\ \mbox{for all}\ k\in\NN_0\label{eq76}
	\end{eqnarray}
	(see \cite[Lemma 6.55, p. 175]{19} and Claim 2 of Proposition \ref{prop10}).
	
	Returning to (\ref{eq73}) and using (\ref{eq74}), (\ref{eq75}), (\ref{eq76}), we see that only the tail (that is, $k=1$) of the long exact sequence is nontrivial. Moreover, by the rank theorem and the exactness of (\ref{eq73}), we have
	\begin{eqnarray}\label{eq77}
		&{\rm rank}\, H_1(W^{1,p}(\Omega),(\chi^+_\lambda)^\vartheta)&={\rm rank}\, {\rm ker}\, \hat{\partial}_*+{\rm rank}\, {\rm im}\, \hat{\partial}_*\nonumber\\
		&&={\rm rank}\, {\rm im}\, i_*+{\rm rank}\, {\rm im}\, \hat{\partial}_*,\nonumber\\
		\Rightarrow&{\rm rank}\, C_1(\chi^+_\lambda,\hat{u})\leq 1&(\mbox{see (\ref{eq74}), (\ref{eq75}), (\ref{eq76})}).
	\end{eqnarray}
	
	From the proof of Proposition \ref{prop10} we know that $\hat{u}\in K_{\chi^+_\lambda}$ is of the mountain pass type. Therefore
	\begin{equation}\label{eq78}
		C_1(\chi^+_\lambda,\hat{u})\neq 0
	\end{equation}
	(see \cite[Corollary 6.81, p. 168]{19}).
	
It follows	from (\ref{eq77}) and (\ref{eq78}) that
	$$C_k(\chi^+_\lambda,\hat{u})=\delta_{k,1}\ZZ\ \mbox{for all}\ k\in\NN_0$$
	
	Similarly, we can show that
	$$C_k(\chi^-_\lambda,\hat{v})=\delta_{k,1}\ZZ\ \mbox{for all}\ k\in\NN_0.$$
This completes the proof.
\end{proof}

Let $\varphi_\lambda:W^{1,p}(\Omega)\rightarrow\RR$ be the energy functional for problem \eqref{eqp} defined by
$$\varphi_\lambda(u)=\frac{1}{p}\tau(u)-\lambda\int_\Omega G(z,u)dz-\int_\Omega F(z,u)dz\ \mbox{for all}\ u\in W^{1,p}(\Omega).$$

Evidently, $\varphi_\lambda\in C^1(W^{1,p}(\Omega),\RR).$
\begin{prop}\label{prop13}
	If hypotheses $H(\beta),H(g),H(f)$ hold and $\lambda>0$, then the functional $\varphi_\lambda$ satisfies the $C$-condition.
\end{prop}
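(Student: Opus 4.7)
\begin{proof-sketch}
The plan is to follow the standard two-stage strategy for proving the C-condition for $p$-Laplacian problems with resonance at a higher eigenvalue: (i) prove the sequence is bounded in $W^{1,p}(\Omega)$, and (ii) use the $(S)_+$ property of $A$ to extract a strongly convergent subsequence. Stage (ii) is routine once boundedness is known; stage (i) is where the two parts of hypothesis $H(f)(ii)$, together with the sublinearity in $H(g)(ii)$, must cooperate.

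So let $\{u_n\}_{n\ge 1}\subseteq W^{1,p}(\Omega)$ satisfy $|\varphi_\lambda(u_n)|\le M$ and $(1+\|u_n\|)\varphi_\lambda'(u_n)\to 0$ in $W^{1,p}(\Omega)^*$. Argue by contradiction and assume $\|u_n\|\to\infty$. Set $y_n=u_n/\|u_n\|$, so up to a subsequence $y_n\xrightarrow{w}y$ in $W^{1,p}(\Omega)$ and $y_n\to y$ in $L^p(\Omega)$ and $L^p(\partial\Omega)$. Dividing the relation obtained from $\langle\varphi_\lambda'(u_n),h\rangle$ by $\|u_n\|^{p-1}$ and using (\ref{eq5}) together with $H(g)(ii)$ (which gives $g(\cdot,u_n)/\|u_n\|^{p-1}\to 0$ in $L^{p'}(\Omega)$ since $q<p$) and $H(f)(ii)$ (which gives $f(\cdot,u_n)/\|u_n\|^{p-1}\to\hat{\lambda}_m|y|^{p-2}y$ in $L^{p'}(\Omega)$, by the argument already used in Propositions \ref{prop10} and \ref{prop11}), one first tests with $h=y_n-y$ to obtain $\lim_n\langle A(y_n),y_n-y\rangle=0$. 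Proposition \ref{prop4} then yields $y_n\to y$ in $W^{1,p}(\Omega)$, so $\|y\|=1$. Passing to the limit in the rescaled equation gives
\begin{equation*}
-\Delta_p y(z)=\hat{\lambda}_m|y(z)|^{p-2}y(z)\ \mbox{in}\ \Omega,\quad \frac{\partial y}{\partial n_p}+\beta(z)|y|^{p-2}y=0\ \mbox{on}\ \partial\Omega,
\end{equation*}
so $y\in C^1(\overline\Omega)$ is an eigenfunction of the Robin $p$-Laplacian associated with $\hat{\lambda}_m$; since $m\ge 2$, $y$ is nodal and in particular $\|y\|_q>0$.

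The hard part is to exploit the second clause of $H(f)(ii)$ to get a contradiction. Combining the energy bound $|p\varphi_\lambda(u_n)|\le pM$ with the derivative bound $|\langle\varphi_\lambda'(u_n),u_n\rangle|\le\varepsilon_n$ gives
\begin{equation*}
\int_\Omega[pF(z,u_n)-f(z,u_n)u_n]dz+\lambda\int_\Omega[pG(z,u_n)-g(z,u_n)u_n]dz\le pM+\varepsilon_n.
\end{equation*}
The second clause of $H(f)(ii)$ yields $c_0>0$ with $pF(z,x)-f(z,x)x\ge c_0|x|^q$ for $|x|$ large, so the $f$-integral is bounded below by $c_0\|u_n\|_q^q-C_1$. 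On the other hand, $H(g)(ii)$ ($q$-sublinearity of $g$ at $\pm\infty$) shows that for any prescribed $\delta>0$ one has $|pG(z,x)-g(z,x)x|\le\delta|x|^q+C_\delta$, so the $g$-integral is bounded below by $-\lambda\delta\|u_n\|_q^q-C_2$. Fixing $\delta$ so small that $c_0-\lambda\delta>0$, we conclude that $\{\|u_n\|_q\}_{n\ge 1}$ is bounded. But $\|u_n\|_q=\|u_n\|\,\|y_n\|_q\to\infty$ since $\|u_n\|\to\infty$ and $\|y_n\|_q\to\|y\|_q>0$, a contradiction. Hence $\{u_n\}_{n\ge 1}\subseteq W^{1,p}(\Omega)$ is bounded.

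Finally, passing to a subsequence, $u_n\xrightarrow{w}u$ in $W^{1,p}(\Omega)$, $u_n\to u$ in $L^p(\Omega)$ and $L^p(\partial\Omega)$. Choosing $h=u_n-u$ in the equation for $\varphi_\lambda'(u_n)$ and using (\ref{eq5}), the reaction term contributes zero in the limit, so $\lim_n\langle A(u_n),u_n-u\rangle=0$. Proposition \ref{prop4} then gives $u_n\to u$ in $W^{1,p}(\Omega)$, completing the verification of the C-condition. The main obstacle is the quantitative balance in the boundedness step: one must isolate the coercive $|u_n|^q$-contribution coming from the $f$-integral and absorb the sublinear $g$-integral by choosing the parameter in the $q$-sublinearity estimate small relative to $c_0/\lambda$.
\end{proof-sketch}
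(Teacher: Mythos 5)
Your proposal is correct and follows essentially the same route as the paper: argue by contradiction, rescale, use the $(S)_+$ property (Proposition \ref{prop4}) to get $y_n\rightarrow y$ with $||y||=1$, and combine the energy and derivative estimates with the second clause of $H(f)(ii)$ and the $q$-sublinearity of $g$ to reach a contradiction, then conclude via the standard $(S)_+$ argument. The only minor differences are that the paper obtains the contradiction by dividing the combined inequality by $||u_n||^q$ and applying Fatou's lemma on the set $\{y\neq0\}$, whereas you invoke the uniformity in $H(f)(ii)$ to bound $||u_n||_q$ directly (equally valid), and your identification of $y$ as a nodal $\hat{\lambda}_m$-eigenfunction, while correct, is not needed since $||y||=1$ already gives $||y||_q>0$.
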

\begin{proof}
	Let $\{u_n\}_{n\geq 1}\subseteq W^{1,p}(\Omega)$ be a sequence such that
	\begin{eqnarray}
		&&|\varphi_\lambda(u_n)|\leq M_2\ \mbox{for some}\ M_2>0\ \mbox{and all}\ n\in\NN\label{eq79}\\
		&&(1+||u_n||)\varphi'_\lambda(u_n)\rightarrow 0\ \mbox{in}\ W^{1,p}(\Omega)^*\ \mbox{as}\ n\rightarrow\infty.\label{eq80}
	\end{eqnarray}
	
	By (\ref{eq80}) we have
	\begin{eqnarray}\label{eq81}
		&&\left|\left\langle A(u_n),h\right\rangle+\int_{\partial\Omega}\beta(z)|u_n|^{p-2}u_nhd\sigma-\lambda\int_\Omega g(z,u_n)hdz-\int_\Omega f(z,u_n)h dz\right|\nonumber\\
		&&\leq\frac{\epsilon_n||h||}{1+||u_n||}
	\end{eqnarray}
	for all $h\in W^{1,p}(\Omega)$ with $\epsilon_n\rightarrow 0^+$.
	
	In (\ref{eq81}) we choose $h=u_n\in W^{1,p}(\Omega)$. Then
	\begin{equation}\label{eq82}
		\tau(u_n)-\int_\Omega[\lambda g(z,u_n)+f(z,u_n)]u_ndz\leq\epsilon_n\ \mbox{for all}\ n\in\NN.
	\end{equation}
	
	Also, from (\ref{eq79}) we have
	\begin{equation}\label{eq83}
		-\tau(u_n)+\int_\Omega p[\lambda G(z,u_n)+F(z,u_n)]dz\leq pM_2\ \mbox{for all}\ n\in\NN.
	\end{equation}
	
	Adding (\ref{eq82}) and (\ref{eq83}), we obtain
	\begin{eqnarray}\label{eq84}
		&&\int_\Omega[p(\lambda G(z,u_n)+F(z,u_n))-(\lambda g(z,u_n)+f(z,u_n))u_n]dz\leq M_3\\
		&&\mbox{for some}\ M_3>0\ \mbox{and all}\  n\in\NN.\nonumber
	\end{eqnarray}
	
	We claim that $\{u_n\}_{n\geq 1}\subseteq W^{1,p}(\Omega)$ is bounded. Arguing by contradiction, suppose that $||u_n||\rightarrow\infty$. We set $y_n=\frac{u_n}{||u_n||},\ n\in\NN$. We have $||y_n||=1$ for all $n\in\NN$ and so we may assume that
	\begin{equation}\label{eq85}
		y_n\stackrel{w}{\rightarrow} y\ \mbox{in}\ W^{1,p}(\Omega)\ \mbox{and}\ y_n\rightarrow y\ \mbox{in}\ L^p(\Omega)\ \mbox{and}\ L^p(\partial\Omega).
	\end{equation}
	
	From (\ref{eq81}) we have
	\begin{eqnarray}\label{eq86}
		&&\left|\left\langle A(y_n),h\right\rangle+\int_{\partial\Omega}\beta(z)|y_n|^{p-2}y_nhd\sigma-\int_\Omega\frac{\lambda g(z,u_n)}{||u_n||^{p-1}}hdz-\int_\Omega\frac{f(z,u_n)}{||u_n||^{p-1}}hdz\right|\nonumber\\
		&&\leq\frac{\epsilon_n||h||}{(1+||u_n||)||u_n||^{p-1}}
	\end{eqnarray}
	for all $n\in\NN$.
	
	From (\ref{eq5}) it is clear that
	\begin{equation}\label{eq87}
		\left\{\frac{g(\cdot,u_n(\cdot))}{||u_n||^{p-1}}\right\}_{n\in\NN},\ \left\{\frac{f(\cdot,u_n(\cdot))}{||u_n||^{p-1}}\right\}_{n\in\NN}\subseteq L^{p'}(\Omega)\ \mbox{are bounded sequences.}
	\end{equation}
	
	In (\ref{eq86}) we choose $h=y_n-y\in W^{1,p}(\Omega)$ and pass to the limit as $n\rightarrow\infty$. Then using (\ref{eq85}) and (\ref{eq87}), we obtain
	\begin{eqnarray}\label{eq88}
		&&\lim\limits_{n\rightarrow\infty}\left\langle A(y_n),y_n-y\right\rangle=0,\nonumber\\
		&\Rightarrow&y_n\rightarrow y\ \mbox{in}\ W^{1,p}(\Omega)\ (\mbox{see Proposition \ref{prop4}})\ \mbox{and so}\ ||y||=1.
	\end{eqnarray}
	
	From (\ref{eq88}) we see that $y\neq 0$ and so if $D_+=\{z\in\Omega:|y(z)|>0\}$, then $|D_+|_N>0$ with $|\cdot|_N$ denoting the Lebesgue measure on $\RR^N$. We have
	\begin{eqnarray}\label{eq89}
		&&|u_n(z)|\rightarrow+\infty\ \mbox{for almost all}\ z\in D_+,\nonumber\\
		&\Rightarrow&\liminf\limits_{n\rightarrow\infty}\frac{pF(z,u_n(z))-f(z,u_n(z))u_n(z)}{|u_n(z)|^\tau}>0\ \mbox{for almost all}\ z\in D_+\nonumber\\
		&&\mbox{(see hypothesis H(f)(iii))}\nonumber\\
		&\Rightarrow&\liminf\limits_{n\rightarrow\infty}\frac{1}{||u_n||^\tau}\int_{D_+}[pF(z,u_n)-f(z,u_n)u_n]dz>0\ \mbox{(by Fatou's lemma)}\nonumber\\
		&\Rightarrow&\liminf\limits_{n\rightarrow\infty}\frac{1}{||u_n||^\tau}\int_\Omega[pF(z,u_n)-f(z,u_n)u_n]dz>0\ (\mbox{see (\ref{eq5})}).
	\end{eqnarray}
	
	Note that hypothesis $H(g)(ii)$ implies that given $\epsilon>0$, we can find $c_{17}=c_{17}(\epsilon)>0$ such that
	\begin{equation}\label{eq90}
		g(z,x)x\leq\epsilon|x|^\tau+c_{17}\ \mbox{for almost all}\ z\in\Omega\ \mbox{and all}\ x\in\RR\ (\mbox{see hypothesis $H(g)(i)$}).
	\end{equation}
	
	Since $G(z,x)\geq 0$ for almost all $z\in\Omega$, all $x\in\RR$ (by the sign condition in $H(g)(i)$), we obtain
	\begin{equation}\label{eq91}
		pG(z,x)-g(z,x)x\geq-\epsilon|x|^\tau-c_{17}\ \mbox{for almost all}\ z\in\Omega\ \mbox{and all}\ x\in\RR\ (\mbox{see (\ref{eq90})}).
	\end{equation}
	
	Hence
	\begin{eqnarray*}
		&&\int_\Omega[p(\lambda G(z,u_n)+F(z,u_n))-(\lambda g(z,u_n)+f(z,u_n))u_n]dz\\
		&\geq&\int_\Omega[-\lambda\epsilon|u_n|^\tau+(pF(z,u_n)-f(z,u_n)u_n)]dz\ (\mbox{see (\ref{eq91})})\\
		\Rightarrow&&\frac{1}{||u_n||^\tau}\int_\Omega[p(\lambda G(z,u_n)+F(z,u_n))-(\lambda g(z,u_n)+f(z,u_n))u_n]dz\\
		&\geq&\int_\Omega\frac{-\lambda\epsilon}{||u_n||^{p-\tau}}|y_n|^\tau dz+\frac{1}{||u_n||^p}\int_\Omega[pF(z,u_n)-f(z,u_n)u_n]dz.
	\end{eqnarray*}
	
	Using (\ref{eq89}), we obtain
	\begin{equation}\label{eq92}
		\liminf\limits_{n\rightarrow\infty}\frac{1}{||u_n||^\tau}\int_\Omega[p(\lambda G(z,u_n)+F(z,u_n))-(\lambda g(z,u_n)+f(z,u_n))u_n]dz>0.
	\end{equation}
	
	On the other hand, relation (\ref{eq84}) yields
	\begin{equation}\label{eq93}
		\limsup\limits_{n\rightarrow\infty}\frac{1}{||u_n||^\tau}\int_\Omega[p(\lambda G(z,u_n)+F(z,u_n))-(\lambda g(z,u_n)+f(z,u_n))u_n]dz\leq 0.
	\end{equation}
	
	Comparing (\ref{eq92}) and (\ref{eq93}), we arrive at a contradiction.
	
	This proves that
	$$\{u_n\}_{n\geq 1}\subseteq W^{1,p}(\Omega)\ \mbox{is bounded}.$$
	
	So, we may assume that
	\begin{equation}\label{eq94}
		u_n\stackrel{w}{\rightarrow} u\ \mbox{in}\ W^{1,p}(\Omega)\ \mbox{and}\ u_n\rightarrow u\ \mbox{in}\ L^p(\Omega)\ \mbox{and}\ L^p(\partial\Omega).
	\end{equation}
	
	In (\ref{eq81}) we choose $h=u_n-u\in W^{1,p}(\Omega)$, pass to the limit as $n\rightarrow\infty$, and use (\ref{eq94}). Then
	\begin{eqnarray*}
		&&\lim\limits_{n\rightarrow\infty}\left\langle A(u_n),u_n-u\right\rangle=0,\\
		&\Rightarrow&u_n\rightarrow u\ \mbox{in}\ W^{1,p}(\Omega)\ (\mbox{see Proposition \ref{prop4}}),\\
		&\Rightarrow&\varphi_\lambda\ \mbox{satisfies the $C$-condition}.
	\end{eqnarray*}
The proof is now complete.
\end{proof}

Then using Proposition 8 of Papageorgiou, R\u adulescu \& Repov\v{s} \cite{25}, (see also \cite{4}), we obtain the following property.
\begin{prop}\label{prop14}
	If hypotheses $H(\beta),H(g),H(f)$ hold and $\lambda>0$, then $C_m(\varphi_\lambda,\infty)\neq 0$.
\end{prop}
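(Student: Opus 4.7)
The plan is to invoke (the proof technique of) Proposition 8 of Papageorgiou, R\u adulescu \& Repov\v s \cite{25}, which is built upon the Morse-theoretic index machinery of Cingolani \& Degiovanni \cite{4} for resonant $(p-1)$-linear problems. Since Proposition \ref{prop13} already furnishes the $C$-condition for $\varphi_\lambda$, the critical groups $C_k(\varphi_\lambda,\infty)$ are well-defined, and the task is to pin down $C_m(\varphi_\lambda,\infty)$.

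The scheme is a homotopy reduction to a pure $(p-1)$-linear model functional. Fix $\hat\mu\in(\hat\lambda_m,\hat\lambda_{m+1})\setminus\hat\sigma(p)$ (possible since $\hat\sigma(p)$ is closed and $\hat\lambda_m<\hat\lambda_{m+1}$) and introduce
$$\psi_\lambda(u)=\frac{1}{p}\tau(u)-\frac{\hat\mu}{p}||u||_p^p,\quad u\in W^{1,p}(\Omega).$$
The variational sequence $\{\hat\lambda_k\}$ being generated via the Fadell-Rabinowitz cohomological index \cite{5}, a direct index-theoretic deformation argument (packaged as Proposition 8 of \cite{25}) yields $C_m(\psi_\lambda,\infty)\neq 0$. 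I would then consider the affine homotopy
$$h(t,u)=(1-t)\varphi_\lambda(u)+t\psi_\lambda(u),\quad (t,u)\in[0,1]\times W^{1,p}(\Omega),$$
and appeal to the homotopy invariance of critical groups at infinity (Theorem 5.1.21 of Chang \cite{3}) to conclude $C_k(\varphi_\lambda,\infty)=C_k(\psi_\lambda,\infty)$ for all $k\in\NN_0$; in particular $C_m(\varphi_\lambda,\infty)\neq 0$.

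The main obstacle is the uniform $C$-condition along $h$: the existence of $\eta_0\in\RR$ and $\delta_0>0$ with $h(t,u)\le\eta_0\Rightarrow(1+||u||)||h'_u(t,u)||_*\geq\delta_0$ for all $t\in[0,1]$. Arguing by contradiction in the style of Claim \ref{cl4.1}, one extracts $t_n\to t_*$, $||u_n||\to\infty$, $h(t_n,u_n)\to-\infty$ and $(1+||u_n||)h'_u(t_n,u_n)\to 0$, rescales $y_n=u_n/||u_n||$, and, using $H(g)(ii)$ to annihilate the sublinear $g$-contribution after division by $||u_n||^{p-1}$ and $H(f)(ii)$ to linearize the $f$-contribution, deduces via Proposition \ref{prop4} that $y_n\to y$ in $W^{1,p}(\Omega)$ with $||y||=1$ and
$$-\Delta_p y=\lambda_{t_*}|y|^{p-2}y\ \mbox{in}\ \Omega,\ \frac{\partial y}{\partial n_p}+\beta(z)|y|^{p-2}y=0\ \mbox{on}\ \partial\Omega,$$
where $\lambda_{t_*}=(1-t_*)\hat\lambda_m+t_*\hat\mu$. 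When $t_*=1$ one has $\lambda_{t_*}=\hat\mu\notin\hat\sigma(p)$, forcing $y=0$ and contradicting $||y||=1$. When $t_*<1$, the non-quadraticity condition $\liminf_{|x|\to\infty}\frac{pF(z,x)-f(z,x)x}{|x|^q}>0$ from $H(f)(ii)$, combined with the sign condition $G\geq 0$ from $H(g)(i)$ and the sublinear bound $g(z,x)x\leq\varepsilon|x|^q+c_\varepsilon$, is applied to the identity
$$ph(t_n,u_n)-\langle h'_u(t_n,u_n),u_n\rangle=-(1-t_n)\int_\Omega[p(\lambda G(z,u_n)+F(z,u_n))-(\lambda g(z,u_n)+f(z,u_n))u_n]dz.$$
The left-hand side tends to $-\infty$ (since $h(t_n,u_n)\to-\infty$ and $\langle h'_u(t_n,u_n),u_n\rangle\to 0$), while a Fatou argument on $\{y\neq 0\}$ forces the right-hand side to diverge to $+\infty$ (the factor $(1-t_n)$ being bounded away from zero), reproducing the integral comparison of Proposition \ref{prop13}. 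This yields the required contradiction, and the homotopy invariance above delivers $C_m(\varphi_\lambda,\infty)=C_m(\psi_\lambda,\infty)\neq 0$.
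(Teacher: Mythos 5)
Your overall strategy (reduce to the nonresonant model $\psi_\lambda$ via a linear homotopy and quote the index-theoretic computation for the model) is a reasonable heuristic, but the step on which everything hinges --- the uniform Chang-type condition along $h(t,u)=(1-t)\varphi_\lambda(u)+t\psi_\lambda(u)$ --- is not actually proved, and your contradiction argument for $t_*<1$ fails because of a sign error. The identity $ph(t_n,u_n)-\langle h'_u(t_n,u_n),u_n\rangle=-(1-t_n)\int_\Omega[p(\lambda G(z,u_n)+F(z,u_n))-(\lambda g(z,u_n)+f(z,u_n))u_n]dz$ is correct, and the Fatou argument based on $H(f)(ii)$, the sign condition $G\geq 0$ and $g(z,x)x\leq\varepsilon|x|^q+c_\varepsilon$ (exactly as in Proposition \ref{prop13}) forces the integral on the right to tend to $+\infty$ whenever $\|y\|=1$; hence for $t_*<1$ the right-hand side tends to $-\infty$, which is the \emph{same} direction as the left-hand side (which tends to $-\infty$ since $h(t_n,u_n)\rightarrow-\infty$ and $\langle h'_u(t_n,u_n),u_n\rangle\rightarrow 0$). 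There is no contradiction. The reason the analogous comparison works in Proposition \ref{prop13} is that there $\varphi_\lambda(u_n)$ is bounded on \emph{both} sides, which yields the upper bound \eqref{eq84} on the integral; in the Chang setting you only know $h(t_n,u_n)\leq\eta_0$ with $\eta_0\rightarrow-\infty$, so no such upper bound is available. Consequently the genuinely dangerous case --- $t_*$ with $\lambda_{t_*}\in\hat{\sigma}(p)$, in particular $t_*=0$, $\lambda_{t_*}=\hat{\lambda}_m$, where the rescaled limit $y$ may perfectly well be a nodal $\hat{\lambda}_m$-eigenfunction (here, unlike Claim \ref{cl4.1} in Proposition \ref{prop11}, there is no one-sided truncation forcing $y\geq 0$) --- is not excluded, and this resonant case is precisely the content of the proposition.

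For comparison, the paper does not attempt any such homotopy: Proposition \ref{prop14} is obtained directly from Proposition 8 of \cite{25} (see also \cite{4}), i.e., from the Cingolani--Degiovanni comparison of the sublevel sets of the resonant functional at very negative levels with those of the nonresonant model, carried out through the Fadell--Rabinowitz cohomological index; it is there that the nonquadraticity condition $\liminf_{x\rightarrow\pm\infty}\frac{pF(z,x)-f(z,x)x}{|x|^q}>0$ is actually exploited. Note also an attribution slip in your sketch: Proposition 8 of \cite{25} concerns the resonant energy functional itself, not the model $\psi_\lambda$; the statement $C_m(\psi_\lambda,\infty)=C_m(\psi_\lambda,0)\neq 0$ for $\hat{\mu}\in(\hat{\lambda}_m,\hat{\lambda}_{m+1})\setminus\hat{\sigma}(p)$ is the Cingolani--Degiovanni/Perera index computation (used via \eqref{eq70}--\eqref{eq71} in Proposition \ref{prop11} in a different role). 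If you are prepared to invoke \cite[Proposition 8]{25} at all, it applies to $\varphi_\lambda$ directly and makes the (unestablished) homotopy step superfluous; if you want a self-contained proof, you must reproduce the sublevel-set/index argument rather than the linear homotopy with Chang's theorem.
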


We assume that $K_{\varphi_\lambda}$ ($\lambda>0$ sufficiently large, as in Proposition \ref{prop10}) is finite. Otherwise we already have an infinity of solutions which are in $C^1(\overline{\Omega})$ (nonlinear regularity theory).
\begin{prop}\label{prop15}
	If hypotheses $H(\beta),H(g),H(f),H_0$ hold and $\lambda>0$ is sufficiently large (see Proposition \ref{prop10}), then
	\begin{eqnarray*}
		&&C_k(\varphi_\lambda,\hat{u})=C_k(\varphi_\lambda,\hat{v})=\delta_{k,1}\ZZ\ \mbox{for all}\ k\in\NN_0,\\
		&&C_k(\varphi_\lambda,0)=C_k(\varphi_\lambda,u_0)=C_k(\varphi_\lambda,v_0)=\delta_{k,0}\ZZ\ \mbox{for all}\ k\in\NN_0.
	\end{eqnarray*}
\end{prop}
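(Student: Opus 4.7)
The trivial critical point and the two local-minimizer solutions $u_0,v_0$ are handled by exhibiting each as a strict local $W^{1,p}$-minimizer of $\varphi_\lambda$, whence its critical groups equal $\delta_{k,0}\ZZ$. For the origin, combining the $(p-1)$-sublinear behaviour of $g$ at zero given by $H(g)(ii)$ with the $(p-1)$-linear behaviour of $f$ near zero supplied by $H(f)(iii)$ (read at the origin, consistent with Example~\ref{ex1}), yields, for any $\varepsilon>0$ and some $r\in(p,p^*)$ and $c>0$,
\[
\lambda G(z,x)+F(z,x)\le \tfrac{1}{p}\bigl(\vartheta(z)+(1+\lambda)\varepsilon\bigr)|x|^p+c|x|^r\quad\mbox{for a.a.\ }z\in\Omega\mbox{ and }|x|\mbox{ small.}
\]
Lemma~\ref{lem5} together with the norm equivalence (\ref{eq1}) then gives $\varphi_\lambda(u)\ge \tfrac{1}{p}(c_3-(1+\lambda)\varepsilon)||u||^p-c'||u||^r$, which for $\varepsilon$ and $||u||$ small and since $r>p$ is strictly positive on $u\neq 0$. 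For $u_0$, Proposition~\ref{prop9} provides $u_0\in {\rm int}_{C^1(\overline\Omega)}[0,\tilde u_\lambda]$ as the global minimizer of the truncated functional $d^+_\lambda$; since (\ref{eq27}) forces $d^+_\lambda=\varphi_\lambda$ on the order interval $[0,\tilde u_\lambda]$, $u_0$ is a local $C^1(\overline\Omega)$-minimizer of $\varphi_\lambda$, and Proposition~\ref{prop2} upgrades this to a local $W^{1,p}(\Omega)$-minimizer. The argument for $v_0$ is identical via $d^-_\lambda$.

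For $\hat u$, the plan is to transfer the mountain-pass computation $C_k(\chi^+_\lambda,\hat u)=\delta_{k,1}\ZZ$ of Proposition~\ref{prop12} to $\varphi_\lambda$ by a homotopy argument in the spirit of Proposition~\ref{prop11}. Consider the affine interpolation
\[
h_t(u)=\tfrac{1}{p}\tau(u)-\int_\Omega\bigl[(1-t)I^+_\lambda(z,u)+t\bigl(\lambda G(z,u)+F(z,u)\bigr)\bigr]dz,\quad t\in[0,1],
\]
so that $h_0=\chi^+_\lambda$ and $h_1=\varphi_\lambda$. Since $\hat u-u_0\in{\rm int}\,C_+$, every $u$ in a suitable $C^1(\overline\Omega)$-neighbourhood of $\hat u$ satisfies $u(z)>u_0(z)$ pointwise, and on this region $i^+_\lambda(z,u)=\lambda g(z,u)+f(z,u)$; hence the derivatives of $I^+_\lambda(z,\cdot)$ and $\lambda G(z,\cdot)+F(z,\cdot)$ coincide, and $\hat u\in K_{h_t}$ for every $t$. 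Arguing as in Claim~\ref{cl3.1}, each $h_t$ satisfies the Cerami condition. If, in addition, $\hat u$ is an isolated member of $K_{h_t}$ uniformly in $t\in[0,1]$, then the homotopy invariance of critical groups (Chang \cite[Thm.~5.1.21]{3}; see also Liang--Su \cite[Prop.~3.2]{16}) yields $C_k(\varphi_\lambda,\hat u)=C_k(\chi^+_\lambda,\hat u)=\delta_{k,1}\ZZ$. The symmetric construction with $\chi^-_\lambda$ and truncation below $v_0$ delivers $C_k(\varphi_\lambda,\hat v)=\delta_{k,1}\ZZ$.

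The main obstacle is the uniform isolation of $\hat u$ along $\{h_t\}_{t\in[0,1]}$. I would argue by contradiction: assume $u_n\in K_{h_{t_n}}\setminus\{\hat u\}$ with $u_n\to\hat u$ in $W^{1,p}(\Omega)$ and $t_n\to t_\ast\in[0,1]$. Each $u_n$ solves a Robin problem whose right-hand side is bounded in $L^\infty(\Omega)$ (because $u_n\to\hat u$ in $W^{1,p}$ and the nonlinearities have subcritical growth by (\ref{eq5})), so Lieberman's $C^{1,\alpha}$-regularity \cite{17} yields $u_n\to\hat u$ in $C^1(\overline\Omega)$. Consequently $u_n-u_0\in{\rm int}\,C_+$ eventually, the truncation in $i^+_\lambda$ becomes inactive along the tail, and $u_n$ simultaneously satisfies $\varphi'_\lambda(u_n)=0$ and $(\chi^+_\lambda)'(u_n)=0$, contradicting the working hypothesis that $K_{\varphi_\lambda}$ is finite (stated just before Proposition~\ref{prop15}). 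Once this isolation is settled, all five critical-group identities of Proposition~\ref{prop15} follow immediately from the preceding steps.
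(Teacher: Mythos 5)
Your proposal is correct and follows essentially the paper's own argument: the groups at $\hat{u},\hat{v}$ are obtained by the same affine homotopy between $\chi^{\pm}_\lambda$ and $\varphi_\lambda$, with uniform isolation proved exactly as in the paper (Lieberman regularity forcing $C^1(\overline\Omega)$-convergence, $\hat{u}-u_0\in{\rm int}\,C_+$ deactivating the truncation, contradiction with the finiteness of $K_{\varphi_\lambda}$), and the origin is treated as a local minimizer just as the paper does via the estimates of Proposition \ref{prop7}. The only cosmetic difference is that for $u_0,v_0$ you show directly that they are local $C^1(\overline\Omega)$-, hence by Proposition \ref{prop2} local $W^{1,p}(\Omega)$-, minimizers of $\varphi_\lambda$ (using $d^{\pm}_\lambda$ and the ${\rm int}_{C^1(\overline\Omega)}$ position), whereas the paper records $C_k(\chi^+_\lambda,u_0)=C_k(\chi^-_\lambda,v_0)=\delta_{k,0}\ZZ$ from Claim \ref{cl3.2} and transfers them by the same homotopy device; the two routes are equivalent.
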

\begin{proof}
	Consider the homotopy $\tilde{h}^+_\lambda(\cdot,\cdot)$ defined by
	$$\tilde{h}^+_\lambda(t,u)=(1-t)\varphi_\lambda(u)+t\chi^+_\lambda(u)\ \mbox{for all}\ (t,u)\in[0,1]\times W^{1,p}(\Omega).$$
	
	Suppose that we can find $t_n\rightarrow t$ and $u_n\rightarrow \hat{u}$ in $W^{1,p}(\Omega)$ such that
	$$(\tilde{h}^+_\lambda)'_n(t_n,u_n)=0\ \mbox{for all}\ n\in\NN.$$
	
	We have
	\begin{eqnarray}\label{eq95}
		&&\left\langle A(u_n),h\right\rangle+\int_{\partial\Omega}\beta(z)|u_n|^{p-2}u_nhd\sigma\nonumber\\
		&=&(1-t_n)\int_\Omega(\lambda g(z,u_n)+f(z,u_n))hdz+\int_\Omega i^+_\lambda(z,u_n)hdz\nonumber\\
		&&\mbox{for all}\ h\in W^{1,p}(\Omega),\ n\in\NN,\nonumber\\
		\Rightarrow&&-\Delta_pu_n(z)=(1-t_n)(\lambda g(z,u_n(z))+f(z,u)n(z))+t_ni^+_\lambda(z,u_n(z)))\nonumber\\
		&&\mbox{for almost all}\ z\in\Omega,\nonumber\\
		&&\frac{\partial u}{\partial n_p}+\beta(z)|u_n|^{p-2}u_n=0\ \mbox{on}\ \partial\Omega\ (\mbox{see Papageorgiou \& R\u adulescu \cite{22}}).
	\end{eqnarray}
	
	From (\ref{eq95}) and Proposition 7 of Papageorgiou \& R\u adulescu \cite{23}, we have
	$$||u_n||_\infty\leq M_4\ \mbox{for some}\ M_4>0\ \mbox{and all}\ n\in\NN.$$
	
	Then invoking Theorem 2 of Lieberman \cite{17}, we can find $\alpha\in(0,1)$ and $M_5>0$ such that
	\begin{equation}\label{eq96}
		u_n\in C^{1,\alpha}(\overline{\Omega})\ \mbox{and}\ ||u_n||_{C^{1,\alpha}(\overline{\Omega})}\leq M_5\ \mbox{for some}\ M_5>0\ \mbox{and all}\ n\in\NN.
	\end{equation}
	
	By (\ref{eq96}), the compact embedding of $C^{1,\alpha}(\overline{\Omega})$ into $C^1(\overline{\Omega})$ and the fact that $u_n\rightarrow \hat{u}$ in $W^{1,p}(\Omega)$, we infer that
	\begin{eqnarray*}
		&&u_n\rightarrow\hat{u}\ \mbox{in}\ C^1(\overline{\Omega}),\\
		&\Rightarrow&u_n-u_0\in {\rm int}\, C_+\ \mbox{for all}\ n\geq n_0\ (\mbox{see Proposition \ref{prop10}}),\\
		&\Rightarrow&\{u_n\}_{n\geq n_0}\subseteq K_{\varphi_\lambda}\ (\mbox{see (\ref{eq31})}),
	\end{eqnarray*}
	a contradiction to our hypothesis that $K_{\varphi_\lambda}$ is finite.
	
	Therefore by the homotopy invariance property of critical groups (see \cite[p. 836]{11}), we have
	\begin{eqnarray*}
		&&C_k(\varphi_\lambda,\hat{u})=C_k(\chi^+_\lambda,\hat{u})\ \mbox{for all}\ k\in\NN_0,\\
		&\Rightarrow&C_k(\varphi_\lambda,\hat{u})=\delta_{k,1}\ZZ\ \mbox{for all}\ k\in\NN_0.
	\end{eqnarray*}
	
	Similarly, using this time $\chi^-_\lambda$, we show that
	$$C_k(\varphi_\lambda,\hat{v})=\delta_{k,1}\ZZ\ \mbox{for all}\ k\in\NN_0.$$
	
	Recall that $u_0\in D_+$ and $v_0\in-D_+$ are local minimizers of the functionals $\chi^+_\lambda(\cdot)$ and $\chi^-_\lambda(\cdot)$, respectively (see Claim \ref{cl4.2} in the proof of Proposition \ref{prop10}). Hence we have
	\begin{equation}\label{eq97}
		C_k(\chi^+_\lambda,u_0)=C_k(\chi^-_\lambda,v_0)=\delta_{k,0}\ZZ\ \mbox{for all}\ k\in\NN_0.
	\end{equation}
	
	A homotopy invariance argument as above, shows that
	\begin{eqnarray*}
		&&C_k(\varphi_\lambda,u_0)=C_k(\chi^+_\lambda,u_0)\ \mbox{and}\ C_k(\varphi_\lambda,v_0)=C_k(\chi^-_\lambda,v_0)\ \mbox{for all}\ k\in\NN_0,\\
		&\Rightarrow&C_k(\varphi_\lambda,u_0)=C_k(\varphi_\lambda,v_0)=\delta_{k,0}\ZZ\ \mbox{for all}\ k\in\NN_0\ (\mbox{see (\ref{eq97})}).
	\end{eqnarray*}
	
	Finally, hypotheses $H(g)(ii)$ and $H(f)(iii)$ imply that
	$$u=0\ \mbox{is a local minimizer of}\ \varphi_\lambda$$
	(see also the proof of Proposition \ref{prop7}). It follows that
	$$C_k(\varphi_\lambda,0)=\delta_{k,0}\ZZ\ \mbox{for all}\ k\in\NN_0.$$
The proof is now complete.
\end{proof}

\begin{prop}\label{prop16}
	If hypotheses $H(\beta),H(g),H(f),H_0$ hold and $\lambda>0$ is large enough  (see Proposition \ref{prop10}), then problem \eqref{eqp} has a fifth nontrivial solution
	$$y_0\in C^1(\overline{\Omega}).$$
\end{prop}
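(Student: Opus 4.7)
The plan is to argue by contradiction using the Morse relation (\ref{eq4}), leveraging the three pieces of machinery already assembled: the $C$-condition for $\varphi_\lambda$ (Proposition \ref{prop13}), the critical-group computations at the four known nontrivial critical points and at $0$ (Proposition \ref{prop15}), and the non-vanishing of $C_m(\varphi_\lambda,\infty)$ at degree $m\geq 2$ (Proposition \ref{prop14}). All the hard analytical work has been done, so the present step reduces essentially to a bookkeeping match of formal power series coefficients.

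Concretely, I would suppose for contradiction that $K_{\varphi_\lambda}=\{0,u_0,v_0,\hat u,\hat v\}$; if this fails, a fifth critical point already exists. By Proposition \ref{prop15} the local Morse polynomials are
\[
M(t,0)=M(t,u_0)=M(t,v_0)=1,\qquad M(t,\hat u)=M(t,\hat v)=t,
\]
so the left-hand side of (\ref{eq4}) equals $3+2t$. Writing $P(t,\infty)=\sum_{k\geq 0}p_kt^k$ and $Q(t)=\sum_{k\geq 0}q_kt^k$ with $p_k,q_k\in\NN_0$, the right-hand side of (\ref{eq4}) has coefficient $p_k+q_k+q_{k-1}$ at $t^k$ (with $q_{-1}=0$). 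I would then read off the coefficient of $t^m$: on the left it is $0$ since $m\geq 2$, while on the right it is $p_m+q_m+q_{m-1}\geq p_m\geq 1$ because $C_m(\varphi_\lambda,\infty)\neq 0$ by Proposition \ref{prop14}. This contradiction produces a fifth critical point $y_0\in K_{\varphi_\lambda}\setminus\{0,u_0,v_0,\hat u,\hat v\}$.

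Finally I would upgrade the regularity of $y_0$: as a weak solution of \eqref{eqp}, Proposition 7 of Papageorgiou \& R\u adulescu \cite{23} gives $y_0\in L^\infty(\Omega)$, and Theorem 2 of Lieberman \cite{17} then yields $y_0\in C^{1,\alpha}(\overline\Omega)\subseteq C^1(\overline\Omega)$, as required. I do not expect a real obstacle in this final step; the only thing to check carefully is that one is entitled to use (\ref{eq4}), which requires the $C$-condition (secured by Proposition \ref{prop13}) together with the finiteness of $K_{\varphi_\lambda}$ that we are already assuming without loss of generality, since an infinite $K_{\varphi_\lambda}$ would itself furnish infinitely many $C^1(\overline\Omega)$-solutions and trivially produce the missing fifth one.
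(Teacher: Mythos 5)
Your argument uses exactly the ingredients the paper uses (Propositions \ref{prop13}, \ref{prop14}, \ref{prop15} plus nonlinear regularity), and in outline it is the same Morse--theoretic step; the only difference is the mechanism, and that difference hides one small caveat. The paper does not run a contradiction through the Morse relation \eqref{eq4}: it invokes the standard fact that, for a $C^1$ functional satisfying the $C$-condition with $K_{\varphi_\lambda}$ finite, $C_m(\varphi_\lambda,\infty)\neq 0$ already forces the existence of some $y_0\in K_{\varphi_\lambda}$ with $C_m(\varphi_\lambda,y_0)\neq 0$; since $m\geq 2$, Proposition \ref{prop15} shows $C_m$ vanishes at $0,u_0,v_0,\hat u,\hat v$, so $y_0$ is new, and then $L^\infty$-bounds from \cite{23} and Lieberman \cite{17} give $y_0\in C^1(\overline\Omega)$ exactly as you say. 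Your route through \eqref{eq4} is essentially a hand-made proof of that fact in this special case, and the bookkeeping (left-hand side $3+2t$, coefficient comparison at $t^m$, the finiteness reduction for $K_{\varphi_\lambda}$) is fine.

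The one step you should not pass over is the inference ``$p_m\geq 1$ because $C_m(\varphi_\lambda,\infty)\neq 0$''. Relation \eqref{eq4} is stated in terms of \emph{ranks} of integral homology groups, and a nonzero abelian group can be pure torsion, hence of rank $0$; Proposition \ref{prop14} as stated only gives nontriviality of $C_m(\varphi_\lambda,\infty)$, not positive rank, so your comparison of the $t^m$-coefficients would not detect a purely torsion group at infinity. The gap is easy to close: either quote the group-level statement the paper relies on (its proof proceeds through exact sequences of triples over the finitely many critical levels and never uses ranks), or run the Morse relation with field coefficients. With that repair, your argument is a correct, slightly more explicit version of the paper's proof.
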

\begin{proof}
	According to Proposition \ref{prop14}, we have
	$$C_m(\varphi_\lambda,\infty)\neq 0.$$
	
	So, there exists $y_0\in K_{\varphi_\lambda}$ such that
	\begin{equation}\label{eq98}
		C_m(\varphi_\lambda,y_0)\neq 0.
	\end{equation}
	
	Since $m\geq 2$, by Proposition \ref{prop15} and (\ref{eq98}), we infer that
	$$y_0\notin\{0,u_0,v_0,\hat{u},\hat{v}\}.$$
	
	Therefore $y_0$ is a fifth nontrivial solution of \eqref{eqp} (for sufficiently large $\lambda>0$) and the nonlinear regularity theory implies that $y_0\in C^1(\overline{\Omega})$.
\end{proof}

Finally, we can state the following multiplicity theorem for problem \eqref{eqp}.
\begin{theorem}\label{th17}
	If hypotheses $H(\beta),H(g),H(f),H_0$ hold, then for all sufficiently large $\lambda>0$ problem \eqref{eqp} has at least five nontrivial solutions
	\begin{eqnarray*}
		&&u_0,\hat{u}\in D_+\ \mbox{with}\ \hat{u}-u_0\in {\rm int}\,C_+,\\
		&&v_0,\hat{v}\in-D_+\ \mbox{with}\ v_0-\hat{v}\in {\rm int}\, C_+\\
		&\mbox{and}&y_0\in C^1(\overline{\Omega}).
	\end{eqnarray*}
\end{theorem}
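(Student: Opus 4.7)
The plan is to assemble the multiplicity statement directly from the machinery built in Sections~3 and~4, since every ingredient has essentially been proven; what remains is a bookkeeping argument that collects the five solutions and verifies they are pairwise distinct with the stated ordering and regularity.

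First I would fix $\lambda>0$ large enough so that all of Propositions~\ref{prop7}, \ref{prop8}, \ref{prop9}, \ref{prop10}, and \ref{prop16} apply simultaneously (each of these requires $\lambda$ above some threshold, and one takes the maximum of the finitely many thresholds). From Proposition~\ref{prop9} we extract the two constant-sign solutions $u_0 \in \mathrm{int}_{C^1(\overline{\Omega})}[0,\tilde{u}_\lambda] \subseteq D_+$ and $v_0 \in \mathrm{int}_{C^1(\overline{\Omega})}[\tilde{v}_\lambda,0] \subseteq -D_+$, where $\tilde{u}_\lambda,\tilde{v}_\lambda$ are the auxiliary solutions from Proposition~\ref{prop7}.

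Next I would apply Proposition~\ref{prop10} with these $u_0,v_0$ as starting points; this produces $\hat{u}\in D_+$ with $\hat{u}-u_0\in\mathrm{int}\,C_+$ and $\hat{v}\in -D_+$ with $v_0-\hat{v}\in\mathrm{int}\,C_+$. The strict order relations in $\mathrm{int}\,C_+$ guarantee $\hat{u}\neq u_0$ and $\hat{v}\neq v_0$, so these are genuinely four distinct nontrivial solutions. Sign information (two in $D_+$, two in $-D_+$) immediately separates the positive pair from the negative pair, so no further argument is needed there.

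Finally, I would invoke Proposition~\ref{prop16} to produce the fifth solution $y_0\in C^1(\overline{\Omega})$. The key point, already observed in the proof of Proposition~\ref{prop16}, is that $y_0$ is a critical point of $\varphi_\lambda$ with $C_m(\varphi_\lambda,y_0)\neq 0$ for some $m\geq 2$, whereas by Proposition~\ref{prop15} the four known solutions $u_0,\hat{u},v_0,\hat{v}$ together with the trivial critical point $0$ have critical groups concentrated in degrees $0$ and $1$ only. Consequently $y_0 \notin \{0,u_0,\hat{u},v_0,\hat{v}\}$, yielding the fifth nontrivial solution. I should add one tacit remark to justify the standing assumption that $K_{\varphi_\lambda}$ is finite: if it were not, nonlinear regularity (Lieberman~\cite{17}) would already provide infinitely many $C^1(\overline{\Omega})$ solutions and the theorem would hold a fortiori.

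The only genuinely nonroutine step is the disjointness argument in the last paragraph, i.e.\ ensuring that $y_0$ is not one of the previously produced solutions; but this is precisely what the critical-group calculation in Propositions~\ref{prop12} and~\ref{prop15} was designed to handle, via the contrast between $\delta_{k,0}\mathbb{Z}$ or $\delta_{k,1}\mathbb{Z}$ (for the known critical points) and the nonvanishing of $C_m(\varphi_\lambda,y_0)$ in degree $m\geq 2$. Since no new analytical estimates are needed beyond what has been established, the proof reduces to writing down these three lines of synthesis and citing the relevant propositions.
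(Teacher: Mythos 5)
Your proposal is correct and follows essentially the same route as the paper, which likewise obtains Theorem \ref{th17} by combining Propositions \ref{prop9}, \ref{prop10} and \ref{prop16}, with the fifth solution separated from $\{0,u_0,v_0,\hat{u},\hat{v}\}$ through the critical-group contrast of Propositions \ref{prop15} and \ref{prop14} (degree $m\geq2$ versus degrees $0$ and $1$), and with the finiteness of $K_{\varphi_\lambda}$ handled exactly as you indicate.
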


{\bf Question.}
	Is it possible, in the framework of the present paper, to generate nodal solutions for \eqref{eqp}?

\medskip
{\bf Acknowledgments.} This research was supported by the Slovenian Research Agency grants
P1-0292, J1-8131, J1-7025,  N1-0064, and N1-0083.

\end{document}